\newtheorem{theorem}{Theorem}[section]
\newtheorem{proposition}[theorem]{Proposition}
\newtheorem{corollary}[theorem]{Corollary}
\newtheorem{lemma}[theorem]{Lemma}
\newtheorem*{thm}{Theorem}
\theoremstyle{definition}
\newtheorem{definition}[theorem]{Definition}
\newtheorem{example}[theorem]{Example}
\newtheorem{remark}[theorem]{Remark}
\def\blue{\color[rgb]{0,0,1}}
\def\red{\color[rgb]{1,0,0}}
\def\green{\color[rgb]{0,1,0}}
\def\tor#1{\ensuremath{\operatorname{tor}#1}}
\def\torM#1{\ensuremath{\operatorname{tor}_{\mathcal{M}}{#1}}}
\def\dominio{\ensuremath{R}}
\DeclareMathOperator{\rk}{\operatorname{rk}}
\DeclareMathOperator{\cork}{\operatorname{cork}}
\DeclareMathOperator{\Ann}{\operatorname{Ann}}
\DeclareMathOperator{\Gr}{\operatorname{Gr}}
\DeclareMathOperator{\Hom}{\operatorname{Hom}}
\DeclareMathOperator{\link}{\operatorname{link}}
\title[Independencies and Tutte for $D$-matroids]{Set of independencies and Tutte polynomial\\ of matroids over a domain}
\author{Alessio Borz\`i and Ivan Martino}
\date{\today}
\begin{document}
	\begin{abstract}
		In this work, we study matroids over a domain and several classical combinatorial and algebraic invariants related.
		We define their Grothendieck-Tutte polynomial $T_{\mathcal{M}}(x,y)$, extending the definition given by Fink and Moci in \cite{fink2016matroids}, and we show that such polynomial has the classical deletion-contraction property.
		
		Moreover, we study the set of independencies for a realizable matroid over a domain, generalizing the definition of \emph{poset of torsions} $\Gr \mathcal{M}$ given by the second author in \cite{martino2017face}.
		This is a union of identical simplicial posets as for (quasi-)arithmetic matroids. The new notions harmonize naturally through the face module $N_\mathcal{M}$ of the matroid over a domain. 
		
		Whenever $\Gr \mathcal{M}$ is a finite poset, the Hilbert series $N_\mathcal{M}(t)$ of its face module is a specialization of the Tutte polynomial $T_{\mathcal{M}}(x,y)$.
		Further, for arrangements of codimension-one abelian subvarities of an elliptic curve admitting complex multiplication, we extend certain results of Bibby \cite{MR3487239} and we provide an algebraic interpretation of the elliptic Tutte polynomial.
	\end{abstract}
	\maketitle

	Let $N$ be a $n\times d$ matrix with coefficients in a domain $\dominio$,
	\begin{equation}\label{eq:matrix-intro}
	N = \begin{bmatrix}
	v_{1,1}     & v_{1,2} & \dots & v_{1,n} \\
	v_{2,1}     & v_{2,2} & \dots & v_{2,n} \\
	\vdots      & \vdots  & \ddots & \vdots \\
	v_{d,1}     & v_{d,2} & \dots & v_{d,n}
	\end{bmatrix}.
	\end{equation}
	We consider each column $v_i=(v_{1,i}, \dots, v_{d,i})^t$ to be an element in the $\dominio$-module $R^d$. 
	%
	When $\dominio$ is a field, the collection of linear independent subsets of $\{v_1, \dots, v_n\}$ is a realizable matroid. In other words, we consider all subsets $A$ of $[n]=\{1,\dots, n\}$ such that $\operatorname{span}(v_i:i\in A)$ has precisely dimension $|A|$. The collection of those sets is a simplicial complex, called \emph{independent complex}, see for instance the first chapter of \cite{oxley2006matroid}.
	
	A similar collection of independencies was studied for matrix with integer coefficients. The character group associated to each column is a disjoint union of algebraic subtori in $(\mathbb{C}^*)^d$ \cite{MR2183118}. This collection of tori is a toric arrangement that realized a so called arithmetic matroid \cite{MR2989987}.
	In this instance, a substitute for the independent complex is provided in \cite{martino2017face} by the \emph{poset of torsions} $\Gr \mathcal{M}$ associated to a matroid over $\mathbb{Z}$. 
	The poset of torsions for a represented Z-matroid is also a quotient of the independence poset of the semimatroid associated to an infinite periodic arrangement, see Section 7 of \cite{Delucchi-Dali-STANLEYREISNERRINGS}.
	\noindent
	The second author \cite{martino2017face} has shown that for a matroid over $\mathbb{Z}$ of rank $r$ $\Gr \mathcal{M}$ is simplicial (not anymore a simplicial complex), it has a face module $N_\mathcal{M}$ and the natural specialization of the arithmetic Tutte polynomial $T^{a}(x,y)$ of $\mathcal{M}$ is the Hilbert series $N_\mathcal{M}(t)$ of such face module,
	\[ 
	N_\mathcal{M}(t) = \frac{t^r}{(1-t)^r}T^{\operatorname{a}}(\nicefrac{1}{t},1). 
	\]

	In this manuscript we focus on matroids over a domain and we define their Grothendieck-Tutte polynomial, substantially extending the defintions in Section 7 of \cite{fink2016matroids}. Moreover, in the realizable case, we provide their poset of torsions, their Grothendieck $f$-vector, and their face module $N_\mathcal{M}$.
	We study the relation among these new algebro-combinatorial objects, but, before going any further with the presentation of the results, it is worth to explain the motivations driving us.
	
	\subsubsection*{Generalizations of matroids.}
	Matroids are at the crossroads of Algebra, Combinatorics, Geometry, and Topology. Since they were introduced by Whitney \cite{whitney1935} in 1935, new variations have appeared in literature encoding different type of independence. We have already presented arithmetic matroids \cite{moci2012tutte, MR2989987} that are closely related to valuated matroids \cite{MR1164708, MR1433646,MR2844079, MR3944531}. It is also important to mention the effort on the study of oriented matroids
	\cite{MR0485461, MR970389} and complex matroids \cite{MR1226982, MR3000567}.
	
	\noindent
	Of crucial importance for this work is the foundational article of Fink and Moci \cite{fink2016matroids} where they define and develop the theory of matroids (of modules) over a ring $R$. 
	A matroid over $R$ is not defined as a matroid enriched with extra data, but with a single axiom, naturally generalizing the classical matroid axioms.
	%
	%
	There are further generalizations to matroids over hyperings \cite{MR3702974, MR3771848, MR3725876, BakerBowlwerMatroidsHyper, MR3861778}, strongly related to tropical geometry, see also for instance 
	\cite{Frenkphdthesis, MR3783573}.

	\subsubsection*{Matroids over a domain $\dominio$.}
	In this paper we focus on matroids over a domain $\dominio$, that is the choice (up to isomphism) of a finitely generated $\dominio$-module $\mathcal{M}(A)$ for every subset $A$ of a ground set $[n]$ satisfying a technical but natural axiom, see Defintion \ref{definition matroid over ring}.
	
	\noindent
	A realizable matroid over $\dominio$ is a choice of elements $v_1, \dots, v_n$ in the finitely generated $R$-module $\mathcal{M}(\emptyset)$ such that $\mathcal{M}(A) \simeq \mathcal{M}(\emptyset) / (v_i : i \in A)$. Each realization is in bijection with the matricies (up to equivalence) of the form \eqref{eq:matrix-intro}. 
	%
	
	\noindent
	Matroids over a field (when the domain $\dominio$ is a field) are classical matroids \cite[Proposition 2.6]{fink2016matroids}: the choice of the field is not relevant, but it is relevant that $\mathbf{k}$-modules are $\mathbf{k}$-vector spaces.
	The classical matroid is so described by the rank function.
	
	\noindent
	Realizable matroids over $\mathbb{Z}$ coincide with arithmetic matroids \cite{moci2012tutte} and essentially encode the Combinatorics behind toric arrangements. 
	Realizable arthimetic matroids are characterized by Pagaria in \cite{Pagaria-Orientable-arithmetic-matroids}. 
	
	\noindent
	Fink and Moci have also characterized matroids over a Discrete Valuation Ring \cite{fink2016matroids} and more generically over a valuation ring \cite{MR3883211}.
	In this manuscript we initiate the natural next step of the study and we focus on matroid over a domain.

	\subsubsection*{The Grothendieck-Tutte polynomial}
	The Tutte polynomial of a classical matroid $M$ of rank $r$ is a polynomial with integer coefficients defined as 
	\begin{equation*}
	T_{M}(x,y)=\sum_{A\subseteq [n]} (x-1)^{r-\rk(A)}(y-1)^{|A|-\rk(A)},
	\end{equation*}
	where $\rk(A)$ is the rank of $A$.
	Moci \cite{moci2012tutte} provided its arithmetic version by correcting every term of the sum with the \emph{multiplicity} $m(A)$ of the set:
	\begin{equation*}
	T^{\operatorname{a}}_\mathcal{M}(x,y)=\sum_{A\subseteq [n]} m(A) (x-1)^{r-\rk(A)}(y-1)^{|A|-\rk(A)}.
	\end{equation*}
	In the realizable case, such multiplicity coincides algebraically with the cardinality of a certain torsion group, and geometrically with the number of connected components of a specific intersection in the toric arrangement described above {\cite{moci2012tutte, MR2989987, MR3240933}} realizing the (quasi-)arithmetic matroid $\mathcal{M}$. 
	
	\noindent
	For elliptic arrangements $\mathcal{E}$ \cite{MR3203648, PagariaElliptic, MR3487239}, by setting the multiplicity with the same geometric interpretation, but with no algebraic counterpart, Bibby has defined the elliptic Tutte polynomial $T^e_\mathcal{E}(x,y)$;
	this is provided in equation \eqref{eq:tutte-elliptic} and treated carefully at the end of Section 4 of \cite{MR3487239}. 
	As an application of our results we are going to provide an algebraic interpretation of the multiplicity $m(S)$ for elliptic arrangements of elliptic curve admitting complex muliplication. 
	Moreover, Pagaria \cite{PagariaElliptic} has recently shown that the Poincar\'e polynomial of the complement of an elliptic arrangements is not a specialization of the elliptic Tutte polynomial $T^e(x,y)$.
	It is worth to recall that Tran and Yoshinaga \cite{MR3916005}, also together with Ye \cite{YeTanMasahiko} have generalized the notion of Tutte polynomial to abelian Lie group arrangements.
	
	In Section 7 of \cite{fink2016matroids}, Fink and Moci described the Tutte-Grothendieck polynomial as an element of the Tutte-Grothendieck ring of an $R$-matroid $\operatorname{K}(R-\operatorname{Mat})$.
	Whenever $R$ is a Dedekind Domain, the Tutte-Grothendieck ring injects into a Grothendieck style ring and its elements resemble the "Tutte polynomial" for specific valuations. (We are going to be more specific in Section \ref{sec:preliminary-Tutte} of the Preliminaries.)
	One of our main results is to provide an \emph{explicit} form for the Tutte polynomial for matroid over a domain (not just Dedekind). This formulation does not involve using the dual of an $R$-matroid.

	\subsubsection*{The face poset and face ring of a matroid}
	One of the cryptomorphism for classical matroids is that a matroid $M$ is a simplicial complex $\mathcal{I}$ on a {\em ground set} $[n]$ satisfying the following property:
	\begin{equation*}
	A, B \in \mathcal{I}, |A|>|B| \Rightarrow \exists a\in A\setminus B: B\cup \{a\}\in \mathcal{I}.
	\end{equation*}
	%
	The simplicial complex $\mathcal{I}$ is called independent complex. The set $\mathcal{I}$ ordered by inclusion is the face poset of the matroid $M$, and the Stanley-Reisner ring $A_M$ of $M$ is the face ring of this poset. 
	%
	The Hilbert series of $A_M$ and the Tutte polynomial of $M$ are related by the following result:
	
	\begin{thm}[Appendix of Bj\"orner in \cite{DC-P-Box}]
		Let $M$ be a matroid of rank $r$ with ground set $[n]$ and call $M^*$ its dual matroid. Then:
		\[
		A_M(t)=\frac{t^{r}}{(1-t)^{r}} T_{M^*}(1, \nicefrac{1}{t}).
		\]
		where $A_M(t)$ is the Hilbert series of $A_M$.
	\end{thm}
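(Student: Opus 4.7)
The plan is to unpack both sides as sums indexed by independent sets of $M$ and match them term by term. I would begin by recalling the standard expression for the Hilbert series of the Stanley--Reisner ring of a simplicial complex $\Delta$ on $[n]$:
\[
A_M(t) \;=\; \sum_{B\in \mathcal{I}} \left(\frac{t}{1-t}\right)^{|B|},
\]
where $\mathcal{I}$ denotes the independent complex of $M$. This identity is obtained by stratifying the monomials surviving modulo the Stanley--Reisner ideal according to the support of their exponent vector, which must be a face of $\Delta$; the generating function of monomials with a fixed support of size $k$ contributes $t^k/(1-t)^k$.

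Next I would compute $T_{M^*}(1,\nicefrac{1}{t})$ from the definition. Substituting $x=1$ forces the factor $(x-1)^{(n-r)-\rk_{M^*}(A)}$ to vanish unless $\rk_{M^*}(A) = n-r$, i.e.\ unless $A$ is spanning in $M^*$. Using the standard rank duality $\rk_{M^*}(A) = |A| - r + \rk_M([n]\setminus A)$, this condition translates into $[n]\setminus A$ being independent in $M$. Setting $B := [n]\setminus A$, the exponent $|A| - \rk_{M^*}(A)$ simplifies to $r - |B|$, so that
\[
T_{M^*}(1,\nicefrac{1}{t}) \;=\; \sum_{B\in \mathcal{I}} \left(\frac{1}{t}-1\right)^{r-|B|}.
\]

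Finally, I would multiply by the prefactor $t^{r}/(1-t)^{r}$ and rewrite $\tfrac{1}{t}-1 = \tfrac{1-t}{t}$ so that $(\tfrac{1}{t}-1)^{r-|B|} = (1-t)^{r-|B|}/t^{r-|B|}$. Collecting the factors of $t$ and $1-t$ produces exactly $(t/(1-t))^{|B|}$ inside the sum, which matches the Stanley--Reisner expression from the first step and proves the identity.

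The steps are all routine once the bookkeeping is set up, so there is no real obstacle; the only subtlety is pairing each surviving term of $T_{M^*}(1,\nicefrac{1}{t})$ with an independent set of $M$ via complementation, and checking that the $x=1$ substitution truly kills every contribution coming from non-spanning subsets in $M^*$ (so that the sum really is over $\mathcal{I}$ and not a larger family). The theorem being attributed to Bj\"orner's appendix in \cite{DC-P-Box}, this is essentially the computation given there, reorganised so that the role of duality appears as the $A \leftrightarrow [n]\setminus A$ bijection.
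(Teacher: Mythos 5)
Your proof is correct. A careful check: the Hilbert series formula $A_M(t)=\sum_{B\in\mathcal{I}}(t/(1-t))^{|B|}$ is standard for the Stanley--Reisner ring with each variable in degree~$1$; the substitution $x=1$ kills all terms except those with $\rk_{M^*}(A)=n-r$; via the duality formula $\rk_{M^*}(A)=|A|+\rk_M([n]\setminus A)-r$ this is equivalent to $[n]\setminus A\in\mathcal{I}$; and the exponent bookkeeping $(\tfrac1t-1)^{r-|B|}\cdot\tfrac{t^r}{(1-t)^r}=(\tfrac{t}{1-t})^{|B|}$ closes the argument.

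The paper itself does not prove this theorem: it is quoted as a known result of Bj\"orner. What the paper does prove is the generalization to matroids over a domain (Theorem~\ref{thm:tutte-f-vector} together with Theorem~\ref{Hilbert specialization Tutte}), and there the authors deliberately sidestep the dual matroid: they evaluate $T_{\mathcal{M}}(t,1)$ directly, observing that the $y=1$ substitution restricts the sum to independent sets of the generic matroid, and then compare with the $f$-vector of the poset of torsions. In the classical case this is literally the same computation as yours via the Tutte duality $T_{M^*}(1,\nicefrac1t)=T_M(\nicefrac1t,1)$, so the two proofs are equivalent up to whether one routes the argument through $M^*$. The paper's formulation without the dual is worth noting, since for matroids over a general domain a good notion of dual matroid is not always available (the paper remarks this explicitly), which is why their Theorem~\ref{thm:tutte-f-vector} is stated in terms of $T_{\mathcal{M}}(\nicefrac1t,1)$ rather than a dual.
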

	
	Recently the second author has generalized this result for realizable arithmetic matroids (Theorem A of \cite{martino2017face}) and, in this work, we are going to push even further by showing that the theorem holds also for every realizable matroids over a domain having finite poset of torsion.
	
	\section*{Presentation of our results}
	We briefly set some notations: if $\mathcal{M}$ is a matroid over a domain $\dominio$, then $\torM(A)$ is the the torsion submodule of $\mathcal{M}(A)$ and the rank (associated to the essential part of the generic matroid) is denoted by $\rk_{\mathcal{M}}(A)$.
	Moreover, $L_0(R\text{-mod})$ is the commutative ring generated by the isomorphism classes of finitely generated $R$-modules $[N]$ under the relation $[N\oplus N']=[N][N']$, which also defines the product operation in this ring.
	\setcounter{section}{2}
	\begin{definition}
		We define the Tutte polynomial of the $R$-matroid $\mathcal{M}$ as the following polynomial with coefficients in $L_0(R\text{-mod})$
		\[ 
		T_{\mathcal{M}}(x,y) = \sum_{A \subseteq [n]} [\tor(A)^\vee] (x-1)^{r-\rk(A)}(y-1)^{|A|-\rk(A)} \in L_0(R\text{-mod})[x, y]. 
		\]
	\end{definition}
	
	This polynomial fulfills all the expected properties:
	\setcounter{theorem}{7}
	\begin{theorem}[Deletion-Contraction property]
		Let $R$ be a domain and $\mathcal{M}$ be an $R$-matroid of rank $r$ on the ground set $[n]$. If $\mathcal{M}(\emptyset)$ is torsion free and $\mathcal{M}([n])=0$, then
		\[ 
		T_{\mathcal{M}}(x,y) = 
		\begin{cases}
		y T_{\mathcal{M} \setminus i}(x,y) & \text{if $i$ is a loop,} \\
		x T_{\mathcal{M} / i}(x,y) & \text{if $i$ is a coloop,} \\
		T_{\mathcal{M} \setminus i}(x,y) + T_{\mathcal{M} / i}(x,y) & \text{otherwise.}
		\end{cases} 
		\]
	\end{theorem}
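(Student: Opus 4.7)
The plan is to adapt the classical deletion-contraction argument by splitting the defining sum of $T_{\mathcal{M}}(x,y)$ according to whether $i\in A$ or not: set
\[
S_1=\sum_{\substack{A\subseteq[n]\\ i\notin A}}[\tor(A)^\vee](x-1)^{r-\rk(A)}(y-1)^{|A|-\rk(A)},\qquad S_2=\sum_{\substack{A\subseteq[n]\\ i\in A}}[\tor(A)^\vee](x-1)^{r-\rk(A)}(y-1)^{|A|-\rk(A)}.
\]
In $S_1$, since $(\mathcal{M}\setminus i)(A)=\mathcal{M}(A)$ for every $A\subseteq[n]\setminus\{i\}$, both $[\tor(A)^\vee]$ and $\rk(A)$ coincide with their values in $\mathcal{M}\setminus i$. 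In $S_2$, the reparametrization $A=B\cup\{i\}$ together with the definition $(\mathcal{M}/i)(B)=\mathcal{M}(B\cup\{i\})$ yields $[\tor_{\mathcal{M}}(A)^\vee]=[\tor_{\mathcal{M}/i}(B)^\vee]$ term by term.

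The second step is to track the ranks and match exponents. The rank of $\mathcal{M}\setminus i$ equals $r-1$ if $i$ is a coloop and $r$ otherwise, while the rank of $\mathcal{M}/i$ equals $r$ if $i$ is a loop and $r-1$ otherwise. If $i$ is a loop, $\rk_{\mathcal{M}}(B\cup\{i\})=\rk_{\mathcal{M}}(B)$, which produces an extra factor $(y-1)$ in $S_2$; moreover the loop condition forces $\mathcal{M}/i=\mathcal{M}\setminus i$, so $T_{\mathcal{M}}=(1+(y-1))\,T_{\mathcal{M}\setminus i}=y\,T_{\mathcal{M}\setminus i}$. If $i$ is neither a loop nor a coloop, the identity $\rk_{\mathcal{M}}(B\cup\{i\})=\rk_{\mathcal{M}/i}(B)+1$ makes both exponents reindex cleanly and we obtain $S_1=T_{\mathcal{M}\setminus i}$ and $S_2=T_{\mathcal{M}/i}$. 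If $i$ is a coloop, the drop in ambient rank of $\mathcal{M}\setminus i$ gives $S_1=(x-1)\,T_{\mathcal{M}\setminus i}$ while $S_2=T_{\mathcal{M}/i}$, so the desired identity $T_{\mathcal{M}}=x\,T_{\mathcal{M}/i}$ reduces to the coloop identity $T_{\mathcal{M}\setminus i}=T_{\mathcal{M}/i}$.

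The main obstacle is precisely this coloop identity. Since a coloop raises the rank of every $A\subseteq[n]\setminus\{i\}$ by one, the rank exponents in $T_{\mathcal{M}\setminus i}$ and $T_{\mathcal{M}/i}$ already agree term by term, and the remaining content is to establish $[\tor_{\mathcal{M}}(A)^\vee]=[\tor_{\mathcal{M}}(A\cup\{i\})^\vee]$ for every $A\not\ni i$. This is where the axioms of a matroid over a domain intervene: a coloop forces the canonical surjection $\mathcal{M}(A)\twoheadrightarrow\mathcal{M}(A\cup\{i\})$ to fit into a short exact sequence whose kernel is a free $R$-summand, so passing to the torsion submodule leaves the isomorphism class invariant in $L_0(R\text{-mod})$. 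The hypothesis that $\mathcal{M}(\emptyset)$ is torsion free is exactly what ensures this splitting behaviour propagates uniformly to every $A$.
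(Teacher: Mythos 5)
Your decomposition into $S_1$ (with $i\notin A$) and $S_2$ (with $i\in A$, reindexed by $B=A\setminus\{i\}$) is exactly the paper's Equation~\eqref{eq:Tutte decomposition}, and the bookkeeping of ranks and exponents in the three cases is correct: the ``otherwise'' case is essentially Theorem~\ref{tutte formula}, and you correctly reduce the coloop case to the identity $T_{\mathcal{M}\setminus i}=T_{\mathcal{M}/i}$, which in turn reduces to $[\tor(A)^\vee]=[\tor(A\cup\{i\})^\vee]$ for all $A\not\ni i$.

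However, this last reduction is where the real work lies and where your argument has a genuine gap. You assert that ``a coloop forces the canonical surjection $\mathcal{M}(A)\twoheadrightarrow\mathcal{M}(A\cup\{i\})$ to fit into a short exact sequence whose kernel is a free $R$-summand,'' attributing the propagation of this splitting to the hypothesis that $\mathcal{M}(\emptyset)$ is torsion-free. Both halves of this are off. First, the kernel being isomorphic to $R$ (which does follow from $i$ being a coloop and $R$ a domain) does not make the sequence split: projectivity of a \emph{kernel} gives no splitting; one needs projectivity of the quotient or some external input. Second, the hypothesis that actually drives the coloop case in the paper is $\mathcal{M}([n])=0$, not torsion-freeness of $\mathcal{M}(\emptyset)$ --- and you never invoke $\mathcal{M}([n])=0$ at all. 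The paper's Proposition~\ref{lemma no torsion coloop} establishes $\mathcal{M}(A)\simeq\mathcal{M}(A\cup\{i\})\oplus R$ by a descending induction on $n-|A|$: the base case uses $\mathcal{M}([n])=0$ to get $\mathcal{M}([n]\setminus i)\simeq R$, and the inductive step exhibits an explicit splitting using the matroid axiom and a diagram chase. Without this, your key isomorphism of torsion modules is unjustified. (The remark after Proposition~\ref{lemma no torsion coloop} even notes that one could weaken $\mathcal{M}([n])=0$ to $\mathcal{M}([n])$ torsion-free \emph{only} when a dual $R$-matroid exists, underlining that some hypothesis on $\mathcal{M}([n])$ is essential.)

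A smaller but parallel issue appears in the loop case: you state that ``the loop condition forces $\mathcal{M}/i=\mathcal{M}\setminus i$,'' but for $R$-matroids this is an isomorphism of modules $\mathcal{M}(A)\simeq\mathcal{M}(A\cup\{i\})$ that requires proof, and it is precisely here that the torsion-freeness of $\mathcal{M}(\emptyset)$ is used (Proposition~\ref{lemma torsion-free loop}): a loop gives $\mathcal{M}(A)/(x)\otimes Q(R)\simeq\mathcal{M}(A)\otimes Q(R)$, whence $(x)\otimes Q(R)=0$, and torsion-freeness then forces $(x)=0$ via Lemma~\ref{torsion free tensor product}, with an induction to propagate to all $A$. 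Your write-up skips this and, worse, relocates the role of the torsion-free hypothesis to the coloop case, where it does not belong. The skeleton of your argument matches the paper's, but both the loop and coloop torsion-identities need to be proved, not asserted, and the two hypotheses in the theorem statement each earn their keep in a specific one of those two cases.
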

	
	
	
	Finally, as in the classical case and in the arithmetic case, the Tutte polynomial does save a lot of combinatorial information. For instance, we can read back the $f$-vector of the matroid $\mathcal{M}$.
	
	\setcounter{section}{3}
	\begin{definition}
		Let $\mathcal{M}$ be a matroid over a domain $R$.
		Recall that $\Delta \mathcal{M}$ is the independent complex of the generic matroid of $\mathcal{M}$.
		We define 
		\[
		f_{i-1}=\sum_{\substack{A\in \Delta \mathcal{M}, \\ |A|=i}} [\tor (A)^{\vee}]\in L_0(R\text{-mod})
		\]
		The Grothendieck $f$-vector of $\mathcal{M}$ is the vector  $(f_{-1}, f_{0}, \dots, f_{r-1})$ in $L_0(R\text{-mod})^{r+1}$. 
	\end{definition}
	
	Whenever the poset of torsion is finite, we get back the classical notion of the $f$-vector, by evaluating the isomorphic classes $[\tor(A)^{\vee}]$ by their cardiality, see for instance \cite{MR3336842, MR2538614}. We are going to work a few interesting examples of these cases in Section \ref{Sec:ring-of-integers}.
	
	\noindent
	Moreover, even if the poset of torsions in not finite, the Tutte polynomial and the Grothendieck $f$-vector are related through
	\begin{equation*}
	\frac{t^r}{(1-t)^r} T_{\mathcal{M}}\left(\nicefrac{1}{t},1\right) = \sum_{i=0}^r f_{i-1} \frac{t^i}{(1-t)^i};
	\end{equation*}
	see Theorem \ref{thm:tutte-f-vector}.
	
	To present properly the poset of torsions of a matroid over a domain, we need to establish a few notations.
	From an $R$-matroid $\mathcal{M}$, we can associate the classical matroid $\mathcal{M}_E \otimes Q(R)$, where $Q(R)$ is the field of fractons of the domain $R$.
	We denote by $\Delta \mathcal{M}$, the simplicial complex of independent sets of $\mathcal{M}_E \otimes Q(R)$. 
	%
	%
	Let $A$ be a subset of $[n]$ and $b \in [n] \setminus A$ such that $A \cup \{b\} \in \Delta \mathcal{M}$. Then, of course, $A \in \Delta \mathcal{M}$ and $\psi(b) \in \mathcal{M}(\emptyset)$ is not a torsion element. 
	By Definition \ref{definition matroid over ring} there is the quotient map
	\[ 
	\overline{\pi_{A,b}}:\mathcal{M}(A) \rightarrow \mathcal{M}(A \cup \{b\}) \simeq \mathcal{M}(A) / (\psi(b)). 
	\]
	
	\noindent
	If we restrict $\overline{\pi_{A,b}}$ to $\torM{A}$, from Lemma \ref{injection of torsion}, we obtain an injective map denoted by
	\[ 
	\pi_{A,b}: \torM{A} \rightarrow \torM{A \cup \{b\}}. 
	\]
	We now consider the dual map, passing to the 
	controvariant functor $\Hom(-,\nicefrac{Q(R)}{R})$; we call $\tor (A)^{\vee} = \Hom(\tor (A),\nicefrac{Q(R)}{R})$. Thus we obtain the surjective map
	\[ 
	\pi_{A,b}^{\vee}:\tor (A \cup \{b\})^{\vee} \twoheadrightarrow \tor (A)^{\vee}. 
	\]
	
	
	\setcounter{section}{4}
	\setcounter{theorem}{1}
	\begin{definition}
		Let $\mathcal{M}$ be a realizable matroid over $R$, then
		\[ \Gr \mathcal{M} = \{ (A,l) : A \in \Delta \mathcal{M}, \, l \in \tor (A)^{\vee} \}, \]
		is the \emph{set of torsions} of $\mathcal{M}$. We define an order on $\Gr \mathcal{M}$ by providing the covering relations. If $(A \cup \{b\},h),(A,l) \in \Gr \mathcal{M}$, then we set
		\[ (A,l) \triangleleft (A \cup \{b\},h) \stackrel{\text{def}}{\Longleftrightarrow} \pi_{A,b}^{\vee}(h)=l. \]
		
	\end{definition}
	
	When the domain $R$ is a field, then $\mathcal{M}$ is a classical matroid and the poset of torsion coincides with the independent complex. 
	When $R$ is the integer ring, the poset of torsions was introduced by the second author in \cite{martino2017face}.
	Similarly as the arithmetic case, this new poset is simplicial:
	
	\setcounter{theorem}{7}
	\begin{theorem}
		If $\mathcal{M}$ is a realizable matroid over $R$, then $\Gr \mathcal{M}$ is a disjoint union of simplicial posets isomorphic to $\link (\emptyset,e)$.
	\end{theorem}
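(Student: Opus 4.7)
The plan is to partition $\Gr\mathcal{M}$ via a root map to $\tor(\emptyset)^\vee$, identify each fibre with the up-set of the corresponding minimal element, and recognise each such up-set as a simplicial poset. For every $A\in\Delta\mathcal{M}$, iterating the dual surjections $\pi^\vee$ along any saturated chain $\emptyset\subset\{b_1\}\subset\cdots\subset A$ produces a surjection $\rho_A\colon\tor(A)^\vee\twoheadrightarrow\tor(\emptyset)^\vee$. Independence of the chain reduces to functoriality of $\Hom(-,\nicefrac{Q(R)}{R})$ applied to the naturally commuting quotient maps among the modules $\mathcal{M}(A')$. Define $p\colon\Gr\mathcal{M}\to\tor(\emptyset)^\vee$ by $p(A,l):=\rho_A(l)$.

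Next I would verify that $p$ is constant on covers: a cover $(A,l)\triangleleft(A\cup\{b\},h)$ means $\pi_{A,b}^\vee(h)=l$, and the factorization $\rho_{A\cup\{b\}}=\rho_A\circ\pi^\vee_{A,b}$ shows the $p$-values agree. Consequently
\[
\Gr\mathcal{M}=\bigsqcup_{e\in\tor(\emptyset)^\vee}p^{-1}(e).
\]
Moreover $(\emptyset,e)\leq(A,l)$ holds iff $p(A,l)=e$: constancy on chains gives one direction, and conversely any total ordering $A=\{b_1,\dots,b_k\}$ combined with $l_k=l$, $l_{i-1}=\pi^\vee_{A_{i-1},b_i}(l_i)$ (where $A_j=\{b_1,\dots,b_j\}$), produces a saturated chain from $(\emptyset,\rho_A(l))=(\emptyset,e)$ up to $(A,l)$. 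This identifies $p^{-1}(e)$ with $\link(\emptyset,e)$.

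Simpliciality would then follow from a Boolean lower-interval argument: for $(A,l)\in\link(\emptyset,e)$ the interval $[(\emptyset,e),(A,l)]$ consists precisely of the pairs $(B,\pi^\vee_{B,A}(l))$ as $B$ ranges over subsets of $A$, one per subset, with the order inherited from subset inclusion. This is the Boolean lattice of rank $|A|$, which is the defining property of a simplicial poset.

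The main technical obstacle is the well-definedness of $\rho_A$ together with the uniqueness of $m\in\tor(B)^\vee$ making $(B,m)\leq(A,l)$; both boil down to the commutativity $\pi^\vee_{B,A}=\pi^\vee_{B,A'}\circ\pi^\vee_{A',A}$ for $B\subset A'\subset A$, which itself is functoriality of $\Hom(-,\nicefrac{Q(R)}{R})$ applied to the canonical quotient morphisms between $\mathcal{M}(B)$, $\mathcal{M}(A')$, and $\mathcal{M}(A)$. To obtain additionally that all components are mutually isomorphic as simplicial posets one chooses a compatible family of preimages $\{s_A\in\rho_A^{-1}(e)\}$, inductively lifted along $\Delta\mathcal{M}$ using the surjectivity of each $\pi^\vee_{A,b}$ and the above functoriality to resolve compatibility at faces with several subfaces; translation by this family then yields an isomorphism $\link(\emptyset,0)\cong\link(\emptyset,e)$.
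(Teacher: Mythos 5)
Your proposal is correct and follows essentially the same route as the paper: the map $p$ you define is exactly the "unique element below in $\tor(\emptyset)^\vee$'' from the paper's unique-cover lemma (Proposition \ref{unique cover}), your Boolean lower-interval argument matches Theorem \ref{poset of torsions simplicial poset}, and your compatible-lift translation is Proposition \ref{link are isomorphic}. The one cosmetic difference is that you argue Booleanity directly inside $\link(\emptyset,e)$, whereas the paper first reduces to the torsion-free case by building an auxiliary matroid $\mathcal{M}'$ with $\mathcal{M}'(\emptyset)=\mathcal{M}(\emptyset)/\tor(\emptyset)$ (Proposition \ref{link simplicial poset}); both hinge on the same commutativity of the quotient maps, which in turn requires the fixed realization $\psi$.
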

	
	This poset has always finite rank, but it may not be finite, see Example \ref{example:simplicial poset infinite}.
	If it is finite, the $f$-vector of the matroid does coincide with the $f$-vector of the poset, see Section \ref{sec-f-vector-ref-intro}.

	Classically, the keystone in between the Grothendieck-Tutte and the independent complex is the face ring \cite{MR782306, stanley1991f, stanley2007combinatorics, MR2110098}. Similarly, for matroids over a domain, we provide a new algebraic object, the face module $N_\mathcal{M}$.
	
	\noindent
	Let $\mathcal{M}$ be a realized matroid over a domain $R$, with no torsion in $\mathcal{M}(\emptyset)$.
	From Theorem \ref{poset of torsions simplicial poset}, $\Gr \mathcal{M}$ is a simplicial poset and we define $A_{\mathcal{M}}$ as its face ring following Stanley \cite{stanley1991f}.
	
	The \emph{face ideal} of $\Gr \mathcal{M}$ is the ideal of the polynomial ring $\mathbb{K}[x_a: a \in \Gr \mathcal{M}]$ defined as
	\[ 
	I_\mathcal{M} = \left( x_{\hat{0}} - 1, \: x_ax_b - x_{a \wedge b} \left( \sum_{c \in M(a,b)} x_c \right)  : a,b \in \Gr \mathcal{M} \right),
	\]
	where $M(a,b)$ denote the set of minimal upper bounds of $\{a,b\}$ and we require $M(a,b)$ to be a finite set, $x_{a \wedge b} = 0$ if $a \wedge b$ does not exists, and $\sum_{c \in M(a,b)}x_c = 0$ if $M(a,b) = \emptyset$.
	We set the degrees of $x_a$ following the rank of $a$: $\deg(x_a) = \rk(a)$ for all $a$ in $\Gr \mathcal{M}$, see Definition \ref{def:Face-ring}.
	
	
	
	
	%
	
	The \emph{face ring} of $\mathcal{M}$ is the quotient
	\[ 
	A_\mathcal{M} = \frac{\mathbb{K}[x_a:a \in P]}{I_\mathcal{M}}. 
	\]
	Whenever $\mathcal{M}(\emptyset)$ has torsions, 
	then we should define a \emph{face module}, $N_{\mathcal{M}}$.
	
	\setcounter{theorem}{10}
	\begin{definition}
		Let $L$ be the link of $(\emptyset, e)$ in $\Gr \mathcal{M}$ and denote by $A_L$ the face ring of $L$.
		The face module $N_{\mathcal{M}}$ of a matroid over a domain $R$ is the $A_L$-module,
		\[
		N_{\mathcal{M}}=A_L^{|tor(\emptyset)|}.
		\]
	\end{definition}
	
	Mimic the more intricate combinatorics of matroids over a domain, the face ring (or the face module) may be not Noetherian, see Example \ref{example:simplicial poset infinite}.
	
	\section*{Applications}
	There are two interesting application of our results:
	
	\subsubsection*{Hilbert series of the face module.}
	
	Let $R$ be a ring of integers of a number field. In Section \ref{Sec:ring-of-integers}, we define a homomorphism of rings $\varphi:L_0(R\text{-mod}) \rightarrow \mathbb{Z}$, by sending the class of every projective module to $1$ and the class of every torsion module to its cardinality.
	The homomorphism $\varphi$ induces the homomorphism of polynomial rings
	\[ 
	\tilde{\varphi}:L_0(R\text{-mod})[x,y] \rightarrow \mathbb{Z}[x,y]. 
	\]
	and, thus, we consider the image under $\tilde{\varphi}$ of the Tutte polynomial
	\begin{equation*}
	\tilde{T}_{\mathcal{M}}(x,y) = \tilde{\varphi} \big( T_{\mathcal{M}}(x,y) \big) = \sum_{A \subseteq [n]} |\tor(A)^\vee| (x-1)^{r-\rk(A)}(y-1)^{|A|-\rk(A)}.
	\end{equation*}
	
	\setcounter{section}{5}
	\setcounter{theorem}{3}
	\begin{theorem}
		If $\mathcal{M}$ is a realizable $R$-matroid of rank $r$ with finite poset of torsions and let $N_\mathcal{M}(t)$ be the Hilbert series of $N_\mathcal{M}$. Then
		\[ N_\mathcal{M}(t) = \frac{t^r}{(1-t)^r}\tilde{T}_{\mathcal{M}}(\nicefrac{1}{t},1). \]
	\end{theorem}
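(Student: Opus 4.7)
The plan is to chain three ingredients: additivity of the Hilbert series on a direct sum, Stanley's Hilbert-series formula for the face ring of a finite simplicial poset, and the Tutte/$f$-vector identity of Theorem \ref{thm:tutte-f-vector}.

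First I would use the very definition $N_{\mathcal{M}}=A_{L}^{|\tor(\emptyset)|}$ to write, as graded modules,
\[
N_{\mathcal{M}}(t) = |\tor(\emptyset)|\cdot A_{L}(t).
\]
Since $\Gr \mathcal{M}$ is finite by hypothesis, so is $L=\link(\emptyset,e)$, which is a simplicial poset of rank $r$. Stanley's Hilbert-series formula for the face ring of a finite simplicial poset then yields
\[
A_{L}(t) = \sum_{i=0}^{r} f_{i-1}(L)\,\frac{t^{i}}{(1-t)^{i}},
\]
where $f_{i-1}(L)$ counts the rank-$i$ elements of $L$. Verifying that $A_{L}$, endowed with the grading $\deg(x_a)=\rk(a)$ and the relations of $I_{\mathcal{M}}$ from Definition \ref{def:Face-ring}, genuinely satisfies Stanley's hypotheses is the most technical point, but it is essentially a matching of conventions.

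Next I would invoke Theorem \ref{poset of torsions simplicial poset}: $\Gr \mathcal{M}$ is the disjoint union of $|\tor(\emptyset)|$ rank-preserving copies of $L$. Counting rank-$i$ elements of $\Gr \mathcal{M}$ in two ways gives
\[
|\tor(\emptyset)|\cdot f_{i-1}(L) \;=\; \sum_{\substack{A\in \Delta \mathcal{M}\\ |A|=i}} |\tor(A)^{\vee}| \;=\; \tilde{\varphi}(f_{i-1}),
\]
the second equality being the definition of the Grothendieck $f$-vector followed by the specialisation under $\tilde{\varphi}$.

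Finally, applying the ring homomorphism $\tilde{\varphi}$ to the identity of Theorem \ref{thm:tutte-f-vector} produces
\[
\frac{t^{r}}{(1-t)^{r}}\,\tilde{T}_{\mathcal{M}}(\nicefrac{1}{t},1) \;=\; \sum_{i=0}^{r} \tilde{\varphi}(f_{i-1})\,\frac{t^{i}}{(1-t)^{i}},
\]
and chaining the three displays closes the argument:
\[
N_{\mathcal{M}}(t) \;=\; |\tor(\emptyset)|\cdot A_{L}(t) \;=\; \sum_{i=0}^{r} \tilde{\varphi}(f_{i-1})\,\frac{t^{i}}{(1-t)^{i}} \;=\; \frac{t^{r}}{(1-t)^{r}}\,\tilde{T}_{\mathcal{M}}(\nicefrac{1}{t},1).
\]
The hard part, as indicated, is the compatibility check that lets me cite Stanley's Hilbert-series formula directly for $A_{L}$; once that is in place, the rest is bookkeeping (additivity of the $f$-vector on disjoint unions and the ring-homomorphism property of $\tilde{\varphi}$, which in particular commutes with the specialisation at $y=1$).
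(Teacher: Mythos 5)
Your proposal is correct and follows essentially the same route as the paper: both proofs combine Stanley's Hilbert series formula (Theorem \ref{hilbert series and h vector}), additivity of the Hilbert series, the identification of the $f$-vector of $\Gr\mathcal{M}$ with $\tilde\varphi$ applied to the Grothendieck $f$-vector, and the specialization $T_{\mathcal{M}}(t,1)$. Two minor remarks: the disjoint-union decomposition you invoke is Theorem \ref{thm:union-simplicial-posets} rather than Theorem \ref{poset of torsions simplicial poset}; and where the paper organizes the torsion reduction through Lemma \ref{quotient out torsion} (a statement about the Tutte polynomial of $\mathcal{M}$ versus $\mathcal{M}'$), you carry out the equivalent bookkeeping directly on the poset side and then cite Theorem \ref{thm:tutte-f-vector} explicitly, which is a slightly cleaner packaging of the same computation.
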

	
	\noindent
	This extends the results in \cite{DC-P-Box} and \cite{martino2017face}.
	
	\subsubsection*{The elliptic Tutte polynomial.}
	Let $\mathcal{E}$ be an elliptic arrangement \cite{MR3203648, MR3487239, MR1404924, MR1274092, MR3477330, BibbyGadish-elliptic} in $E(\Lambda)^d$, where $\Lambda$ is a lattice in $\mathbb{C}$ generated (as a group) by $1$ and by $w$.
	Let $T^e_\mathcal{E}(x,y)$ be the elliptic Tutte polynomial defined in the end of Section 4 of \cite{MR3487239}:
	\begin{equation}
	T^e_\mathcal{E}(x,y)=\sum_{S\subseteq [n]} m(S) (x-1)^{r'-\rk(S)}(y-1)^{|S|-\rk(S)},
	\end{equation}
	where $m(S)$ is the number of connected components of the intersections of elliptic hyperplanes $\cap_{i\in S} l_i$, $\rk(S)$ is the complex dimension of $\cap_{i\in S} l_i$, and $r'$ is the rank of $\mathcal{E}$.

	\noindent
	When $\operatorname{End}(E(\Lambda))=\mathbb{Z}$, there is a link between the combinatorics of elliptic arrangements and the combinatorics of $\mathbb{Z}$-matroids, see Example 2.1 in \cite{MR3487239}.
	While this connection is evident the meaning and the role of the multiplicity $m(S)$ is arithmetically different.
	
	We are able to provide an algebraic meaning to the multiplicity $m(S)$ in the case of elliptic arrangements of elliptic curves with complex multiplications; specifically, the elliptic Tutte polynomial $T^e_\mathcal{E}(x,y)$ is an evaluation of the Grothendieck-Tutte polynomial, see Proposition \ref{elliptic-tutte-is-our-tutte}.
	
	Finally, Bibby shows that whenever the isogenies group of the elliptic curve $E$ is $\mathbb{Z}$, then the Hilbert series of the model for the cohomology of the open complement $U$ of the elliptic arrangement in $E^d$ is a specialization of the Tutte.
	We can the extend of Bibby's Theorem to every elliptic arrangement, see Theorem \ref{thm-extension-bibby}
	

	\subsection*{Acknowledgements}
	The first author received support from the Scuola Superiore di Catania (Italy) and he is grateful to the Department of Mathematics of KTH for their hospitality. 
	The second author is currently supported by the Knut and Alice Wallenberg Foundation and by the Royal Swedish Academy of Science.
	
	We thank Christin Bibby, Emanuele Delucchi, Alex Fink, Roberto Pagaria, Dan Petersen, and Matteo Varbaro for several chats and email exchanges on the topics.

	\setcounter{section}{0}
	\setcounter{theorem}{0}
	\section{Preliminaries}
	
	\subsection{Simplicial posets}
	
	Let $(P,\leq)$ be a partially ordered set (poset). Let $a,b$ in $P$, then $b$ \emph{covers} $a$, written $a \triangleleft b$, if $a < b$ and there is no element $c \in P$ such that $a < c < b$. We define a partial order from the covering relation by declaring $a \leq b$ whenever there is an integer $n \in \mathbb{N}$ and elements $a_0,a_1,\dots,a_n \in P$ such that there is a \emph{chain} of covers
	\[ a = a_0 \triangleleft a_1 \triangleleft \dots \triangleleft a_{n-1} \triangleleft a_n = b. \]
	
	\noindent
	We allow our poset to be infinite, but we require to have finite chains.

	If $P$ has the minimum element we denote it by $\hat{0}$ and similarly we use $\hat{1}$ for the maximum element (if it exists).
	Let $a,b$ in $P$, we indicate with $a \vee b$ and with $a \wedge b$ respectively the least upper bound (\emph{join}) and the greatest lower bound (\emph{meet}) of $\{a,b\}$ (whenever they exist). A poset $P$ is a \emph{join-semilattice} if for every $a,b \in P$ the join $a \vee b$ exists; $P$ is a \emph{meet-semilattice} if for every $a,b \in P$ the meet $a \wedge b$ exists. The poset $P$ is a \emph{lattice} if it is both a meet-semilattice and a join-semilattice.
	
	A lattice $P$ is \emph{boolean} if it is \emph{distributive} (i.e. $\vee$ and $\wedge$ satisfy the distributive law), it has a minimum $\hat{0}$ and a maximum $\hat{1}$, and every element $a \in P$ has a (necessarily unique) \emph{complement}, that is an element $a' \in P$ such that $a \vee a' = \hat{1}$ and $a \wedge a' = \hat{0}$. 
	If $P$ is boolean, then all maximal chains in $P$ have the same length, that is the rank of $P$. (We remark that our chains are always finite.)
	A typical example of a boolean lattice is the power set $2^X$ ordered by inclusion, where $X$ is a set.
	
	A \emph{simplicial poset} $(P,\leq)$ is a poset with a minimum $\hat{0}$ and for every $a \in P$ the segment $[\hat{0},a] = \{ b \in P : \hat{0} \leq b \leq a \}$ is a boolean lattice. 
	We call \emph{rank} $\rk(a)$ of $a$ the rank of $[\hat{0},a]$.
	The maximum rank of all the elements of $P$ is the rank of the simplicial poset $P$, denoted with $\rk(P)$.
	
	\subsection{Face ring of simplicial posets}
	
	Following \cite{stanley1991f}, we now define a face ring for a simplicial poset $P$ (maybe infinite, but with finite rank). If $a,b$ in $P$, let $M(a,b)$ denote the set of \emph{minimal} upper bounds of $\{a,b\}$. In this work we require $M(a,b)$ to be finite. Let $\mathbb{K}$ be a field, we can associate with $P$ a polynomial ring $\mathbb{K}[x_a : a \in P]$. We set the degrees of $x_a$ following the rank of $a$: $\deg(x_a) = \rk(a)$ for all $a$ in $P$.
	
	\begin{definition}\label{def:Face-ring}
		The \emph{face ideal} of $P$ is the ideal of the polynomial ring $\mathbb{K}[x_a: a \in P]$ defined as
		\[ I_P = \left( x_{\hat{0}} - 1, \: x_ax_b - x_{a \wedge b} \left( \sum_{c \in M(a,b)} x_c \right)  : a,b \in P \right), \]
		where $x_{a \wedge b} = 0$ if $a \wedge b$ does not exists, and $\sum_{c \in M(a,b)}x_c = 0$ if $M(a,b) = \emptyset$. The \emph{face ring} of $P$ is the quotient
		\[ A_P = \frac{\mathbb{K}[x_a:a \in P]}{I_P}. \]
	\end{definition}
	
	The preceding definition generalizes the Stanley-Reisner ring of a simplicial complex. Here we briefly recall its definition.	Throughout this paper, we will set
	\[ [n] = \{1,2,\dots,n\}. \]
	An \emph{abstract simplicial complex} $\Delta$ on $n$ vertices is a collection of subsets of $[n]$ (called \emph{faces}) that is closed under taking subsets, that is if $A \subseteq B \in \Delta$ then $B \in \Delta$. 
	The \emph{Stanley-Reisner ideal} of $\Delta$ is the ideal of $\mathbb{K}[x_1,\dots,x_n]$ defined by $I_{\Delta} = (x_{i_1}\cdot\dots\cdot x_{i_r} : \{ i_1,\dots,i_r \} \notin \Delta)$. The \emph{Stanley-Reisner ring} (or \emph{face ring}) of $\Delta$ is the quotient $\mathbb{K}[\Delta] = \mathbb{K}[x_1,\dots,x_n]/I_{\Delta}$.
	
	If a simplicial poset $P$ is in addition a meet-semilattice, then $P$ is the \emph{face poset} (i.e. the poset of faces ordered by inclusion) of some simplicial complex, and one can check that the face ring of $P$ as a simplicial poset coincide with the face ring of the corresponding simplicial complex (see \cite{stanley1991f} or \cite[Chapter 3, Section 6]{stanley2007combinatorics}).
	
	\begin{example}\label{Example:simplicial poset}
		Consider the poset $P = \{ \hat{0},a,b_0,b_1,c_0,c_1,d_0,d_1 \}$ where $\hat{0}$ is the minimum, $a \leq c_i,d_i$ and $b_i \leq c_i,d_i$ for every $i \in \{0,1\}$.
		~\\
		\begin{center}
			\begin{tikzpicture}[scale=1, align=center]
			\node (00) at (0,0) {$\hat{0}$};
			\node (10) at (-3,2) {$a$};
			\node (20) at (0,2) {$b_0$};
			\node (21) at (3,2) {$b_1$};
			\node (120) at (-4.5,4) {$c_0$};
			\node (121) at (-1.5,4) {$d_0$};
			\node (122) at (1.5,4) {$c_1$};
			\node (123) at (4.5,4) {$d_1$};

			\draw (00) -- (10);
			\draw (00) -- (20);
			\draw (00) -- (21);
			
			\draw (10) -- (120);
			\draw (10) -- (121);
			\draw (10) -- (122);
			\draw (10) -- (123);
			
			\draw (20) -- (120);
			\draw (20) -- (121);
			\draw (21) -- (122);
			\draw (21) -- (123);
			
			\end{tikzpicture}
		\end{center}
		The poset $P$ is a simplicial poset, and its face ring is
		\[ A_P = \frac{\mathbb{K}[x_\alpha : \alpha \in P]}{I_P} \simeq \frac{\mathbb{K}[x_a,x_{b_0},x_{b_1},x_{c_0},x_{c_1},x_{d_0},x_{d_1}]}{
			\left(
			\begin{array}{l}			
			x_a x_{b_i} - (x_{c_i}+x_{d_i}), x_{b_0} x_{b_1}, \\
			x_{c_i} x_{d_j}, x_{c_0} x_{c_1}, x_{d_0} x_{d_1}, \\
			x_{b_i} x_{c_{\overline{i}}}, x_{b_i} x_{d_{\overline{i}}}
			\end{array}
			:
			\begin{array}{l}
			i,j \in \{0,1\} \\
			\overline{i} = 1-i
			\end{array}
			\right)
		} \]
	\end{example}
	
	\subsection{The $f$-vector, $h$-vector, and the Hilbert series}
	Whenever $P$ is finite we count with $f_i$ the number of elements of $P$ of rank $i+1$. There exists a maximum integer $r \in \mathbb{N}$ (the rank of $P$) such that $f_{r-1} \neq 0$. The \emph{$f$-vector} of $P$ is the vector $(f_{-1},f_0,\dots,f_{r-1})$. The \emph{$h$-vector} of $P$ is the vector $(h_0,\dots,h_r)$ defined by the formula $\sum_{i=0}^r f_{i-1}(t-1)^{r-i} = \sum_{i=0}^r h_i t^{r-i}$.
	
	Let $A$ be a finitely generated $\mathbb{N}$-graded $\mathbb{K}$-algebra, and let $N$ be a finitely generated graded $A$-module. Denote by $N_i$ the homogeneous part of degree $i$. Since $N$ is finitely generated, $N_i$ is a finitely generated $\mathbb{K}$-vector space, and we denote its dimension with $\dim_{\mathbb{K}}N_i$. The \emph{Hilbert series} of $N$ is
	\[ N(t) = \sum_{i \in \mathbb{N}} \dim_\mathbb{K}(N_i)t^i. \]
	
	Under the specific grading provided by $\deg(x_a) = \rk(a)$ for all $a$ in $P$, the face ring $A_P$ is a graded $\mathbb{K}[x_a: a \in P]$-module. If $P$ is finite, then $\mathbb{K}[x_a: a \in P]$ is a finitely generated $\mathbb{N}$-graded $\mathbb{K}$-algebra, and $A_P$ is a finitely generated graded module. Its Hilbert series is related to the $h$-vector by the following result.
	
	\begin{theorem}[Stanley, \cite{stanley1991f}]\label{hilbert series and h vector}
		Let $P$ be a finite simplicial poset of rank $r$ and h-vector $(h_0,\dots,h_r)$. With the grading of $A_P$ just defined, we have
		\[ A_P(t) = \frac{ h_0 + h_1t+\dots+h_rt^r }{(1-t)^r}. \]
	\end{theorem}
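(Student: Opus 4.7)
The plan is to exhibit an explicit $\mathbb{K}$-vector space basis of $A_P$ indexed by positive-integer compositions over chains in $P$, compute the Hilbert series by summing contributions of each element, and convert the resulting $f$-vector expression to an $h$-vector expression by a standard algebraic manipulation.

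First I would establish a basis theorem: as a graded $\mathbb{K}$-vector space, $A_P$ has a basis consisting of $1$ together with all monomials $x_{a_1}^{e_1}\cdots x_{a_k}^{e_k}$, where $\hat{0} < a_1 < \cdots < a_k$ is a strict chain in $P$ and $e_1, \ldots, e_k \geq 1$. Spanning follows by repeated application of the relation $x_a x_b = x_{a\wedge b}\sum_{c \in M(a,b)} x_c$ whenever a monomial contains an incomparable pair; a termination argument with respect to a monomial order refining rank confines the rewrite to chain-monomials. Linear independence uses the simplicial poset hypothesis that $[\hat{0}, a]$ is boolean for every $a$: after killing the ideal $(x_b : b \not\leq a)$, the defining relations of $I_P$ restricted to $[\hat{0}, a]$ reduce to the polynomial relations $x_c x_d = x_{c \wedge d} x_{c \vee d}$ of $\mathbb{K}[t_1, \ldots, t_{\rk(a)}]$, and chain-monomials with top element $a$ map to distinct monomials there, forcing independence locally.

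Next I would compute the Hilbert series by grouping the basis according to the maximum element $a$ of its underlying chain. Using the polynomial identification, the contribution of each $a \in P \setminus \{\hat{0}\}$ is the generating series of positive-exponent monomials in $\rk(a)$ variables, which equals $\bigl(\tfrac{t}{1-t}\bigr)^{\rk(a)}$. Collecting by rank yields
\[
A_P(t) \;=\; \sum_{i=0}^{r} f_{i-1}\, \frac{t^i}{(1-t)^i} \;=\; \frac{\sum_{i=0}^{r} f_{i-1}\, t^i(1-t)^{r-i}}{(1-t)^r}.
\]
To identify the numerator with $\sum_{i=0}^r h_i t^i$, I substitute $t \mapsto 1/s$ in the defining relation $\sum_{i=0}^r f_{i-1}(t-1)^{r-i} = \sum_{i=0}^r h_i t^{r-i}$ and multiply through by $s^r$, obtaining exactly the required identity.

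The hard step is the linear independence in the basis theorem: the full face ideal $I_P$ introduces relations through the sets $M(a,b)$ of minimal upper bounds, which typically involve elements outside any given interval $[\hat{0}, a]$, so a priori one cannot exclude hidden relations mixing chain-monomials with different top elements. The approach above circumvents this by localizing to the quotients $A_P / (x_b : b \not\leq a)$, one for each $a \in P$; assembling these quotient maps produces an embedding into a direct sum of polynomial rings on which the chain-monomial basis is manifestly independent.
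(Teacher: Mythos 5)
The paper states this result with a citation to Stanley and gives no proof of its own, so the comparison is with Stanley's original argument in \cite{stanley1991f}. Your proposal is correct and follows essentially the same strategy: establish the monomial basis of $A_P$ indexed by chains $\hat{0}<a_1<\cdots<a_k$ with positive exponents (Stanley's Lemma 3.4), group basis elements by the top element $a$ of the chain to get a contribution $\bigl(t/(1-t)\bigr)^{\rk(a)}$, and then convert the resulting $f$-expression into the $h$-expression via the substitution $t\mapsto 1/s$. Your treatment of linear independence, via the ring maps $A_P \to A_P/(x_b : b\not\leq a) \cong \mathbb{K}[y_1,\dots,y_{\rk(a)}]$ (each sending $x_c \mapsto \prod_{i\in S(c)} y_i$ for $c\leq a$ and killing the rest), is a clean and correct way to package the local-to-boolean reduction; the key facts you need — that exactly one element of $M(c,d)$ lies below $a$ when $c,d\leq a$, and that no element of $M(c,d)$ lies below $a$ when $d\not\leq a$ — both follow directly from the boolean hypothesis on $[\hat{0},a]$, so the quotient maps are well defined and the images of distinct chain-monomials with top $\leq a$ are distinct monomials in the $y$'s. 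The one point you gesture at but leave implicit is the termination of the rewriting procedure for spanning; a convenient strictly increasing potential is $\sum_i e_i\,\rk(a_i)^2$, which increases under $x_a x_b \mapsto x_{a\wedge b}x_c$ because $\rk(a\wedge b)+\rk(c)=\rk(a)+\rk(b)$ with $\rk(a\wedge b)<\min(\rk a,\rk b)$, while total degree is preserved. With that spelled out, the argument is complete.
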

	
	\subsection{Matroids}\label{matroids}
	
	A \emph{matroid} on the ground set $[n]$ is a collection $\mathcal{I}$ of subsets of $[n]$ (called \emph{independent sets}), such that
	\begin{enumerate}[label=(\emph{I}\arabic*)]
		\item $\emptyset \in \mathcal{I}$.
		\item $A \subseteq B \in \mathcal{I} \Rightarrow A \in \mathcal{I}$.
		\item $A,B \in \mathcal{I}$, $|A| < |B|$ $\Rightarrow$ $\exists b \in B \setminus A: A \cup \{b\} \in \mathcal{I}$.
	\end{enumerate}
	The first two axioms make $\mathcal{I}$ into a (non-empty) simplicial complex. Axiom $I3$ is sometimes referred as \emph{independent set exchange property} (or \emph{independence augmentation axiom.}). Let $\mathcal{I}$ be a matroid on the ground set $[n]$, and let $A \subseteq [n]$. All maximal independent subsets of $A$ have the same cardinality, called the \emph{rank} $\rk(A)$ of $A$, whereas the \emph{corank} of $A$ is $\cork(A) = \rk([n]) - \rk(A)$. A matroid can be equivalently defined by assignining the (co)rank of the subsets of $[n]$ (see \cite[Corollary 1.3.4]{oxley2006matroid}).
	
	In \cite{fink2016matroids}, Fink and Moci generalize the notion of matroid, by giving the definition of matroid of modules over a (commutative) ring $R$. Here we recall the definitions and a few notations related. Let $R$ be a commutative ring and denote with $R$-mod the category of finitely generated $R$-modules.
	
	\begin{definition}\label{definition matroid over ring}
		A \emph{matroid of modules over $R$} on the ground set $[n]$ is a function
		\[ \mathcal{M}: 2^{[n]} \rightarrow R\text{-mod} \]
		such that for every $A \subseteq [n]$ and $b,c \in [n] \setminus A$, there exist $x,y \in \mathcal{M}(A)$ such that
		\begin{align*}
		\mathcal{M}(A \cup \{b\}) & \simeq \mathcal{M}(A) / (x) \\
		\mathcal{M}(A \cup \{c\}) & \simeq \mathcal{M}(A) / (y) \\
		\mathcal{M}(A \cup \{b,c\}) & \simeq \mathcal{M}(A) / (x,y)
		\end{align*}
		(note that the choice of $x$ and $y$ depends on both $b$ and $c$).
	\end{definition}
	
	A matroid of modules $\mathcal{M}$ over $R$ is \emph{essential} if no nontrivial projective module is a direct summand of $\mathcal{M}([n])$. In \cite[Proposition 2.6]{fink2016matroids} it was proved that essential matroids over a field $\mathbb{K}$ are equivalent to classical matroids. To avoid confusion, from now on with "matroid" (or "$R$-matroid"), we mean matroid of modules over a (commutative) ring $R$, and with "classical matroid" we refer to the independent sets definition. 
	Given an essential $\mathbb{K}$-matroid $\mathcal{M}$, the corresponding classical matroid can be recovered by assigning the corank: $\cork(A) = \dim_{\mathbb{K}} \mathcal{M}(A)$. 
	%
	
	An $R$-matroid $\mathcal{M}$ on the ground set $[n]$ is \emph{realizable} (or \emph{representable}) if there exists a map $\psi:[n] \rightarrow \mathcal{M}(\emptyset)$ such that
	\[ \mathcal{M}(A) = \mathcal{M}(\emptyset)/(\psi(i):i \in A) \quad \text{for all } A \subseteq [n]. \]
	In this case, $\psi$ is a \emph{realization} of $\mathcal{M}$. 
	We often mention \emph{realized} matroid and we mean a realizable matroid together with a given specific realization.
	Note that an essential $\mathbb{K}$-matroid is realizable if and only if its associated classical matroid is realizable over $\mathbb{K}$.

	Let $\mathcal{M}$ and $\mathcal{M}'$ be two $R$-matroids on respective ground sets $[n]$ and $[m]$. Their \emph{direct sum} $\mathcal{M} \oplus \mathcal{M}'$ is an $R$-matroid on the ground set $[n] \amalg [m]$ defined by
	\[ (\mathcal{M} \oplus \mathcal{M}')(A \amalg A') = \mathcal{M}(A) \oplus \mathcal{M}'(A'). \]
	Let $\mathcal{M}$ be an $R$-matroid on the ground set $[n]$. If $i \in [n]$, the \emph{deletion} $\mathcal{M} \setminus i$ and the \emph{contraction} $\mathcal{M} / i$ of $i$ in $\mathcal{M}$ are $R$-matroids on the ground set $[n] \setminus i$ defined respectively by
	\begin{eqnarray*}
		(\mathcal{M} \setminus i)(A) &=& \mathcal{M}(A), \\
		(\mathcal{M} / i)(A) &=& \mathcal{M}(A \cup \{i\}).
	\end{eqnarray*}
	If $N$ is an $R$-module, the \emph{empty matroid} $\mathcal{M}$ for $N$ is the matroid on the ground set $\emptyset$ that assigns $N$ to $\emptyset$. If the module $N$ is projective, then $\mathcal{M}$ is a \emph{projective empty matroid}. From \cite[Lemma 2.5]{fink2016matroids}, every $R$-matroid $\mathcal{M}$ is the direct sum of an essential $R$-matroid $\mathcal{M}_E$ and a projective empty matroid $\mathcal{M}_P$:
	\begin{equation}\label{eq: essential plus empty}
	\mathcal{M} = \mathcal{M}_E \oplus \mathcal{M}_P.
	\end{equation}
	Let $S$ be a ring and $R \rightarrow S$ an homomorphism of rings. If $\mathcal{M}$ is an $R$-matroid, define
	\[ (\mathcal{M} \otimes_R S) (A) = \mathcal{M}(A) \otimes_R S \quad \text{for all } A \subseteq [n]. \]
	$\mathcal{M} \otimes_R S$ is a matroid over $S$ on the same ground set of $\mathcal{M}$ (\cite[Proposition 2.7]{fink2016matroids}). In particular, let $R$ be a domain and $Q(R)$ be its field of fractions, if we decompose $\mathcal{M}$ as in (\ref{eq: essential plus empty}), then $\mathcal{M}_E \otimes Q(R)$ is called the \emph{generic matroid}. Since the generic matroid is an essential matroid over the field $Q(R)$, we can consider its associated classical matroid. 
	Therefore, from an $R$-matroid $\mathcal{M}$, we can associate the classical matroid $\mathcal{M}_E \otimes Q(R)$. We denote by $\Delta \mathcal{M}$, the simplicial complex of independent sets of $\mathcal{M}_E \otimes Q(R)$. When we use the terminology of classical matroids (i.e. independent sets, rank, \dots) for an $R$-matroid, we will always refer to its generic matroid.

	\subsection{The Tutte polynomial for $R$-matroids}\label{sec:preliminary-Tutte}

	Let $L_0(R\text{-mod})$ be the commutative ring free generated as a group by the isomorphism classes of finitely generated $R$-modules $[N]$ with product $[N][N']=[N\oplus N']$, 
	$L_0(R\text{-mod})$ is a Grothendieck style ring, but we avoid the classical relation $[A]-[B]+[C]=0$ for any short exact sequence $0\rightarrow A \rightarrow B\rightarrow C \rightarrow 0$. (We note that the neutral element $1$ for the multiplication is the class of the zero module.) 
	
	\begin{remark}
		In \cite{fink2016matroids}, Fink and Moci use $\mathbb{Z}[R\text{-mod}]$ for this ring, $L_0(R\text{-mod})$. They denote by $u^{N}$ the class of the $R$-module $N$.
		This object was also used in \cite{MR3657413, MR3423448}.
	\end{remark}
	
	In Section 7 of \cite{fink2016matroids}, Fink and Moci have defined the Tutte-Grothendieck polynomial as an element of the Tutte-Grothendieck ring of an $R$-matroid, which we here denote by K($R$-Mat), essentially following \cite{MR0330004}. They have shown that whenever $R$ is a Dedekind Domain the Tutte-Grothendieck ring injects into $L_0(R\text{-mod})\otimes_{\mathbb{Z}} L_0(R\text{-mod})$ and, specifically, the matroid $\mathcal{M}$ is sent to a Tutte-Grothendieck polynomial style elements:
	\begin{equation*}
	GT_{\mathcal{M}}=\sum_{A\subseteq E} [\mathcal{M}(A)]\otimes[\mathcal{M}^*(E\setminus A)]\in L_0(R\text{-mod})\otimes_{\mathbb{Z}} L_0(R\text{-mod}),
	\end{equation*}
	where the sum runs over all possible subsets $A$ of the groundset $E$ of the matroid $\mathcal{M}$. We refer to Section 4 of \cite{fink2016matroids}, for a precise definition of the dual matroid $\mathcal{M}^*$ over a Dedekind Domain $R$; the duality for matroids over $\mathbb{Z}$ is provided in Section 7 of \cite{MR2989987}. 
	
	If $R$ is a field, then every module is free and  $L_0(R\text{-mod})$ is isomorphic to $\mathbb{Z}[x]$, by assigning $x^d$ to $[R^d]$.
	%
	%
	Hence, one readily gets the classical form for the Tutte polynomial for a classical matroid $\mathcal{M}$ of rank $r$
	\begin{equation*}
	T_{\mathcal{M}}(x,y)=\sum_{A\subseteq [n]} (x-1)^{r-\rk(A)}(y-1)^{|A|-\rk(A)}\in \mathbb{Z}[x, y].
	\end{equation*}
	
	\begin{remark}
		Note that if $\mathbf{k}$ is a field, $K_0(\mathbf{k}\text{-mod})$ differs from $L_0(\mathbf{k}\text{-mod})$,as the first is isomorphic to $\mathbb{Z}$.
	\end{remark}
	
	\noindent
	In a similar fashion, for $\mathbb{Z}$-matroids, one can evaluate (see Section 7.1 of \cite{fink2016matroids}) the $GT_{\mathcal{M}}$ to the arithmetic Tutte polynomial \cite{MR3240933, MR2846363}:    \begin{equation}\label{eq-Tutte-classical}
	T_{\mathcal{M}}(x,y)=\sum_{A\subseteq [n]} m(A) (x-1)^{r-\rk(A)}(y-1)^{|A|-\rk(A)}\in \mathbb{Z}[x, y].
	\end{equation}
	
	\noindent
	In Section \ref{Sec-Tutte-General}, we will show concretely what the Tutte-Grothendieck polynomial of a matroid over a domain $R$ looks like, and our results will generalize the formula in (\ref{eq-Tutte-classical}).

	\subsection{Modules over a (Dedekind) domain}
	
	Let $R$ be a domain, $Q(R)$ its field of fractions and $M$ an $R$-module. 
	An element of $x$ of $M$ is a \emph{torsion element} if it has non trivial annihilator, i.e. $\Ann(x)=\{r\in R: rx=0\} \neq 0$.
	The set of torsion elements is the \emph{torsion part} of $M$:
	\[ \tor{M} = \{ x \in M : \exists r \in R, r \neq 0 \: rx = 0 \}. \]
	Since $R$ is a domain, $\tor{M}$ is a submodule of $M$. If $\tor{M} = 0$, then $M$ is \emph{torsion-free}.
	
	\begin{lemma}\label{injection of torsion}
		If $x \in M$ is not a torsion element and $\pi:M \rightarrow M/(x)$ is the natural surjection, then $\pi_{|\tor{M}}$ is injective.
	\end{lemma}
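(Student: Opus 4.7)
The plan is to unpack the definitions directly: suppose $t \in \tor M$ lies in the kernel of $\pi_{|\tor M}$, so $\pi(t) = 0$, which means $t \in (x)$. I would then write $t = rx$ for some $r \in R$ and aim to conclude $t = 0$ by exploiting the hypothesis that $x$ is not a torsion element together with the fact that $R$ is a domain.

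The key computation runs as follows. Since $t \in \tor M$, there exists $s \in R$ with $s \neq 0$ and $st = 0$. Substituting $t = rx$ gives $(sr)x = 0$, i.e., $sr \in \Ann(x)$. But $x$ is not a torsion element, so $\Ann(x) = 0$, which forces $sr = 0$. Finally, $R$ is a domain and $s \neq 0$, so $r = 0$, and therefore $t = rx = 0$. This shows $\ker(\pi_{|\tor M}) = 0$, i.e., the restriction is injective.

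There is really no main obstacle here; the lemma is essentially a bookkeeping statement combining three facts: (i) $\ker \pi = (x)$, (ii) the definition of $\Ann(x)$, and (iii) the cancellation property of a domain. The only point worth flagging is that one must know that $\tor M$ is an $R$-submodule in order to even make sense of $\pi_{|\tor M}$ as an $R$-module map, but this was noted just before the lemma statement.
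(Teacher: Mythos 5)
Your proof is correct and follows essentially the same route as the paper's: take an element of the kernel of the restricted map, write it as $rx$, and use the domain hypothesis together with $\Ann(x)=0$ to force it to be zero. If anything, your version is slightly more explicit at the crucial step — the paper asserts $\Ann(y)\subseteq\Ann(x)$ for $y=rx$, which is not the trivial containment and already silently uses the cancellation in $R$ that you spell out.
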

	\begin{proof}
		Let $y \in \ker \pi$ then either $y=0$ or $y$ is not a torsion element. In fact, if $y \in \ker \pi = (x)$ and $y \neq 0$, then there exists $r \in R$, $r \neq 0$ such that $y = rx$, so
		\[ \Ann(y) \subseteq \Ann(x) = 0. \qedhere \]
	\end{proof}
	
	\begin{lemma}\label{torsion free tensor product}
		If $M$ is torsion-free, then $M \otimes_R Q(R) = 0$ implies $M = 0$.
	\end{lemma}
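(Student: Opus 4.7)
The plan is to identify $M \otimes_R Q(R)$ with the localization $S^{-1}M$ at the multiplicative set $S = R \setminus \{0\}$, and then argue element-wise. Concretely, I would pick an arbitrary $x \in M$ and trace what the hypothesis $M \otimes_R Q(R) = 0$ says about the simple tensor $x \otimes 1$.

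First, I would recall (or briefly justify) the canonical isomorphism $M \otimes_R Q(R) \simeq S^{-1}M$, under which $x \otimes 1$ corresponds to the fraction $x/1$. The hypothesis then tells me that $x/1 = 0$ in $S^{-1}M$ for every $x \in M$. By the defining equivalence relation on the localization, $x/1 = 0$ means there exists $s \in S$, that is some nonzero $s \in R$, such that $s x = 0$ in $M$. Equivalently, $x$ is a torsion element of $M$, i.e. $x \in \tor M$.

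The final step is immediate: since $M$ is torsion-free by hypothesis, $\tor M = 0$, so the previous paragraph forces $x = 0$ for every $x \in M$, whence $M = 0$. Stated slightly differently, the map $\iota : M \to M \otimes_R Q(R)$, $x \mapsto x \otimes 1$, has kernel contained in $\tor M$, so under the torsion-free hypothesis $\iota$ is injective; if its codomain vanishes, so does its domain.

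I do not anticipate any real obstacle: the only substantive point is the identification of tensoring with $Q(R)$ as localization at $R \setminus \{0\}$, which is standard for a domain, and the torsion-free hypothesis is used exactly once, in the very last implication.
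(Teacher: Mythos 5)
Your proof is correct and matches the paper's argument essentially verbatim: both identify $M \otimes_R Q(R)$ with the localization $S^{-1}M$ at $S = R \setminus \{0\}$ and observe that the natural map $m \mapsto m/1$ is injective precisely because $M$ is torsion-free. No differences worth noting.
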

	\begin{proof}
		Let $S = R \setminus \{0\}$ the multiplicative set of $R$. Since $M$ is torsion-free, $\varphi:M \rightarrow S^{-1}M$ defined by $\varphi(m) = m/1$ is injective. Hence
		\[ M \subseteq S^{-1}M \simeq M \otimes_R Q(R) = 0. 
		\qedhere \]
	\end{proof}
	
	\subsubsection{Structure Theorem for modules over a Dedekind Domain}
	We now briefly recall the definition of a Dedekind domain. 
	An $R$-submodule $N$ of $Q(R)$ is a \emph{fractional ideal} of $R$ if $xN \subseteq R$ for some non zero $x \in R$. A fractional ideal $N$ is \emph{invertible} if there exists a fractional ideal $N'$ such that $NN' = R$. The domain $R$ is \emph{Dedekind} if every fractional ideal is invertible. The last condition is equivalent to requiring that every localization $R_{\mathfrak{p}}$ of $R$ at a prime ideal $\mathfrak{p}$ is a Discrete Valuation Ring. For further reading we suggest \cite[Chapter 9]{atiyah2018introduction}.
	
	Let $R$ be a Dedekind domain, and $M$ an $R$-module. The quotient $M/\tor(M)$ is a projective module and, if $M$ is finitely generated, then
	\[ M \simeq M/\tor(M) \oplus \tor(M), \]
	see for instance \cite[Proposition 3.3]{fink2016matroids}.

	\section{The Grothendieck-Tutte polynomial}\label{Sec-Tutte-General}
	
	In this section, $R$ is a domain with field of fractions $Q(R)$, and $\mathcal{M}$ is an $R$-matroid of rank $r$ on the ground set $[n]$.
	%
	%
	%
	We recall our notations in Section \ref{matroids}:
	$\mathcal{M} \otimes Q(R)$ is a matroid over the field $Q(R)$ (on the same ground set $[n]$), so its essential part $\mathcal{M}_E$ is a classical matroid, and we identify with $\Delta \mathcal{M}$ the associated simplicial complex of independent sets.
	For every subset $A$ of $[n]$ the torsion part of $\mathcal{M}(A)$ is $\torM(A)$, and the rank (associated to the generic matroid) is $\rk_{\mathcal{M}}(A)$.
	
	\noindent
	Moreover, $L_0(R\text{-mod})$ is the commutative ring generated by the isomorphism classes of finitely generated $R$-modules $[N]$ under the relation $[N\oplus N']=[N][N']$, which also defines the product operation in this ring; we wrote more about it in the prelimiary Section \ref{sec:preliminary-Tutte}.
	Let $\vee$ denote the application of the controvariant functor $\Hom(-,\nicefrac{Q(R)}{R})$, so that $\tor (A)^{\vee} = \Hom(\tor (A),\nicefrac{Q(R)}{R})$.
	
	\begin{definition}\label{Definition:GT-domain}
		We define the Tutte polynomial for the matroid $\mathcal{M}$ over a domain $R$ as the following polynomial with coefficients in $L_0(R\text{-mod})$:
		\[ 
		T_{\mathcal{M}}(x,y) = \sum_{A \subseteq [n]} [\tor(A)^\vee] (x-1)^{r-\rk(A)}(y-1)^{|A|-\rk(A)} \in L_0(R\text{-mod})[x, y]. 
		\]
	\end{definition}
	
	\noindent
	We could decompose the sum of the previous definition in the following way:
	\begin{equation}\label{eq:Tutte decomposition}
	\begin{split}
	T_{\mathcal{M}}(x,y) & = \sum_{A \subseteq [n] \setminus i} [\tor(A)^\vee] (x-1)^{r-\rk(A)}(y-1)^{|A|-\rk(A)} + \\
	& + \sum_{A \subseteq [n] \setminus i} [\tor(A \cup \{i\})^\vee] (x-1)^{r-\rk(A \cup \{i\})}(y-1)^{|A \cup \{i\}|-\rk(A \cup \{i\})}
	\end{split}
	\end{equation}
	The sum runs over all subsets $A$ in $[n] \setminus i$, where the latter means $[n] \setminus \{i\}$. For sake of space, we keep this abuse of notation all along this section. 
	
	\begin{theorem}\label{tutte formula}
		Let $R$ be a domain and $\mathcal{M}$ be an $R$-matroid of rank $r$ on the ground set $[n]$. If $i \in [n]$ is not a loop or coloop, then
		\[ T_{\mathcal{M}}(x,y) = T_{\mathcal{M} \setminus i}(x,y) + T_{\mathcal{M} / i}(x,y). \]
	\end{theorem}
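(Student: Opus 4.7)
The plan is to start from the decomposition \eqref{eq:Tutte decomposition} of $T_{\mathcal{M}}(x,y)$, which already splits the defining sum according to whether $i$ belongs to $A$ or not, and to identify its two summands termwise with $T_{\mathcal{M}\setminus i}(x,y)$ and $T_{\mathcal{M}/i}(x,y)$ respectively. For each $A\subseteq [n]\setminus i$ this reduces to matching the coefficient in $L_0(R\text{-mod})$ and the two exponents of $(x-1)$ and $(y-1)$.

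For the first summand, the definition $(\mathcal{M}\setminus i)(A)=\mathcal{M}(A)$ immediately gives $[\tor(A)^{\vee}]$ and $|A|$ in common with $T_{\mathcal{M}\setminus i}$. The rank $\rk_{\mathcal{M}\setminus i}(A)$ computed in the generic matroid equals $\rk_{\mathcal{M}}(A)$, since deletion of a ground element commutes with tensoring by $Q(R)$ and ranks of subsets not containing $i$ are unaffected. The one point that genuinely requires a hypothesis is that the total rank of $\mathcal{M}\setminus i$ is still $r$, so that the exponent $(x-1)^{r-\rk(A)}$ does not change; this is exactly the content of the non-coloop assumption on $i$.

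For the second summand, $(\mathcal{M}/i)(A)=\mathcal{M}(A\cup\{i\})$ by definition of contraction, so the torsion coefficient $[\tor(A\cup\{i\})^{\vee}]$ matches. Because $i$ is not a loop, $\rk_{\mathcal{M}}(\{i\})=1$, and the classical relation between contraction and rank in the generic matroid delivers both $\rk_{\mathcal{M}/i}(A)=\rk_{\mathcal{M}}(A\cup\{i\})-1$ and total rank $r-1$ for $\mathcal{M}/i$. Substituting these into the definition of $T_{\mathcal{M}/i}(x,y)$, the exponents
\[
(r-1)-\rk_{\mathcal{M}/i}(A)=r-\rk_{\mathcal{M}}(A\cup\{i\}),\qquad |A|-\rk_{\mathcal{M}/i}(A)=|A\cup\{i\}|-\rk_{\mathcal{M}}(A\cup\{i\})
\]
coincide with those appearing in the second summand of \eqref{eq:Tutte decomposition}, concluding the termwise match.

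The proof is essentially a bookkeeping exercise, and I do not anticipate a deep obstacle: the only non-trivial input is that the rank function of the generic matroid behaves under the minor operations $\mathcal{M}\setminus i$ and $\mathcal{M}/i$ exactly as in classical matroid theory, which in turn follows from the compatibility of these operations with the functor $-\otimes_R Q(R)$ discussed in Section \ref{matroids}. Once this is granted, the non-coloop and non-loop hypotheses translate precisely into the identities $\rk(\mathcal{M}\setminus i)=r$ and $\rk(\mathcal{M}/i)=r-1$ that are needed to make the exponents line up.
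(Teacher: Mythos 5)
Your proof is correct and follows the same route as the paper: split the defining sum as in \eqref{eq:Tutte decomposition}, then match the coefficients and exponents termwise using the definitions of deletion/contraction for $R$-matroids and the behaviour of the rank function of the generic matroid under these minor operations. You are a bit more explicit than the paper about exactly where the non-loop and non-coloop hypotheses enter (namely that $\rk(\mathcal{M}\setminus i)=r$ and $\rk(\mathcal{M}/i)=r-1$), which is a welcome clarification but not a different argument.
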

	\begin{proof}
		Since $i \in [n]$ is not a loop or coloop, as for classical matroids, for every $A \subseteq [n] \setminus i$ we have
		\begin{align*}
		\rk_{\mathcal{M}}(A)& = \rk_{\mathcal{M} \setminus i}(A), \\
		\rk_{\mathcal{M}}(A \cup \{i\}) &= \rk_{\mathcal{M} / i}(A) +1.
		\end{align*}
		Futher, by definition
		\begin{align*}
		\tor_{\mathcal{M} \setminus i}(A) &= \torM(A), \\
		\tor_{\mathcal{M} / i}(A) &= \torM(A \cup \{i\}).
		\end{align*}
		Finally by substituting in (\ref{eq:Tutte decomposition}) we obtain
		\[ T_{\mathcal{M}}(x,y) = T_{\mathcal{M} \setminus i}(x,y) + T_{\mathcal{M} / i}(x,y). \qedhere \]
	\end{proof}
	
	\begin{proposition}\label{lemma torsion-free loop}
		If $\mathcal{M}(\emptyset)$ is torsion-free and $i \in [n]$ is a loop, then for every $A \subseteq [n]\setminus i$ we have $\mathcal{M}(A) \simeq \mathcal{M}(A \cup \{i\})$, in particular $\tor(A) \simeq \tor(A \cup \{i\})$.
	\end{proposition}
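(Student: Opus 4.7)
The plan is to induct on $|A|$, using as the main tool Vasconcelos's theorem: every surjective $R$-endomorphism of a finitely generated $R$-module is an isomorphism (a classical consequence of Nakayama's lemma applied in $R[X]$).

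For the base case $A = \emptyset$, Definition \ref{definition matroid over ring} applied with $b = i$ supplies some $y \in \mathcal{M}(\emptyset)$ with $\mathcal{M}(\{i\}) \simeq \mathcal{M}(\emptyset)/(y)$. Tensoring this quotient with $Q(R)$ and using that $i$ is a loop in the generic matroid $\mathcal{M}_E \otimes Q(R)$, I would note that, since $\cork_\mathcal{M}(\{i\}) = \cork_\mathcal{M}(\emptyset)$, the two $Q(R)$-vector spaces $\mathcal{M}(\{i\}) \otimes Q(R)$ and $\mathcal{M}(\emptyset) \otimes Q(R)$ have the same dimension. Hence $y \otimes 1 = 0$, i.e. $y$ is torsion in $\mathcal{M}(\emptyset)$, and the torsion-freeness hypothesis forces $y = 0$. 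Thus $\mathcal{M}(\{i\}) \simeq \mathcal{M}(\emptyset)$.

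For the inductive step, I fix $A$ with $|A| \geq 1$, pick any $b \in A$, and set $A' = A \setminus \{b\}$. Applying the matroid axiom to $A'$ with the two elements $b, i \in [n] \setminus A'$ yields $x, y \in \mathcal{M}(A')$ with
\[
\mathcal{M}(A) \simeq \mathcal{M}(A')/(x), \quad \mathcal{M}(A' \cup \{i\}) \simeq \mathcal{M}(A')/(y), \quad \mathcal{M}(A \cup \{i\}) \simeq \mathcal{M}(A')/(x, y).
\]
The inductive hypothesis, applied to $A'$, gives $\mathcal{M}(A' \cup \{i\}) \simeq \mathcal{M}(A')$, hence $\mathcal{M}(A')/(y) \simeq \mathcal{M}(A')$.

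The main obstacle, and the reason the $Q(R)$-dimension argument from the base case does not carry over verbatim, is that $\mathcal{M}(A')$ need not be torsion-free, so the same manipulation would only yield $y \otimes 1 \in (x \otimes 1)$ modulo torsion. To surmount this, I compose the canonical surjection $\mathcal{M}(A') \twoheadrightarrow \mathcal{M}(A')/(y)$ with the isomorphism $\mathcal{M}(A')/(y) \simeq \mathcal{M}(A')$ to obtain a surjective endomorphism of the finitely generated $R$-module $\mathcal{M}(A')$; by Vasconcelos's theorem this endomorphism is also injective, so its kernel $(y)$ must be zero. Therefore $y = 0$ in $\mathcal{M}(A')$, which gives $\mathcal{M}(A \cup \{i\}) \simeq \mathcal{M}(A')/(x,0) = \mathcal{M}(A')/(x) \simeq \mathcal{M}(A)$, closing the induction. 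The last assertion $\tor(A) \simeq \tor(A \cup \{i\})$ then follows at once, since taking the torsion submodule is functorial under isomorphism.
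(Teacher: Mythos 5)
Your proof is correct and follows essentially the same strategy as the paper's: induct on $|A|$, reduce the base case to a $Q(R)$-dimension count forcing the generator of the kernel to be torsion (hence zero by torsion-freeness), and in the inductive step use that a surjective endomorphism of a finitely generated module is injective (Vasconcelos's theorem, cited in the paper as \cite[Corollary 4.4]{eisenbud2013commutative}) to force $y=0$. The only cosmetic difference is that the paper explicitly splits $\mathcal{M}=\mathcal{M}_E\oplus\mathcal{M}_P$ in the base case and also phrases the base-case conclusion via the same Vasconcelos argument rather than a direct dimension count, but the two are equivalent over the field $Q(R)$.
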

	\begin{proof}
		First, write $\mathcal{M} = \mathcal{M}_E \oplus \mathcal{M}_P$, where $\mathcal{M}_E$ is an essential matroid and $\mathcal{M}_P$ is a projective empty matroid with $\mathcal{M}_P(\emptyset) = P$.
		We proceed by induction on the cardinality of $A$. For the base case, since $i \in [n]$ is a loop, we have
		\[ \mathcal{M}_E(\emptyset) \otimes Q(R) \simeq \mathcal{M}_E(i) \otimes Q(R). \]
		By definition of matroid, $\mathcal{M}_E(i) \simeq \mathcal{M}_E(\emptyset) / (x)$ for some $x \in \mathcal{M}_E(\emptyset)$. Since $Q(R)$ is a flat $R$-module, it follows
		\[ \mathcal{M}_E(\emptyset) \otimes Q(R) \simeq \left(\frac{\mathcal{M}_E(\emptyset)}{(x)}\right) \otimes Q(R) \simeq \frac{\mathcal{M}_E(\emptyset) \otimes Q(R)}{(x) \otimes Q(R)}, \]
		from \cite[Corollary 4.4]{eisenbud2013commutative} the natural projection
		\[ \mathcal{M}_E(\emptyset) \otimes Q(R) \twoheadrightarrow \frac{\mathcal{M}_E(\emptyset) \otimes Q(R)}{(x) \otimes Q(R)} \]
		is an isomorphism, so its kernel is zero $(x) \otimes Q(R) = 0$. Since $\mathcal{M}(\emptyset)$ is torsion-free, then also $(x)$ is torsion-free, so from Lemma \ref{torsion free tensor product} it follows that $(x) = 0$ implies $x=0$. Hence $\mathcal{M}_E(\emptyset) \simeq \mathcal{M}_E(i)$, so
		\[ \mathcal{M}(\emptyset) = \mathcal{M}_E(\emptyset) \oplus P \simeq \mathcal{M}_E(i) \oplus P = \mathcal{M}(i). \]
		Now for the inductive step, let $A \subseteq [n] \setminus i$ and $a \in A$. By definition of matroid there exist $y,z \in \mathcal{M}(A \setminus a)$ such that
		\begin{align*}
		\mathcal{M}(A) & \simeq \mathcal{M}(A \setminus a)/(y) \\
		\mathcal{M}((A \cup \{i\}) \setminus a) & \simeq \mathcal{M}(A \setminus a) / (z) \\
		\mathcal{M}(A \cup \{i\}) & \simeq \mathcal{M}(A \setminus a)/ (y,z).
		\end{align*}
		By the inductive hypothesis, we have $\mathcal{M}(A \setminus a) \simeq \mathcal{M}(A \setminus a) / (z)$. Again from \cite[Corollary 4.4]{eisenbud2013commutative} the natural projection $\mathcal{M}(A \setminus a) \twoheadrightarrow \mathcal{M}(A \setminus a)/(z)$ is an isomorphism, so its kernel is zero, so $z = 0$. Now it easily follows $\mathcal{M}(A) \simeq \mathcal{M}(A \cup \{i\})$.
	\end{proof}
	
	\begin{corollary}\label{torsion-free loop}
		If $\mathcal{M}(\emptyset)$ is torsion-free and $i \in [n]$ is a loop, then
		\[ T_{\mathcal{M}}(x,y) = y T_{\mathcal{M} \setminus i}(x,y) \]
	\end{corollary}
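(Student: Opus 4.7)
The plan is to start from the decomposition \eqref{eq:Tutte decomposition} of $T_{\mathcal{M}}(x,y)$ into the sum over $A \subseteq [n]\setminus i$ and the sum over sets of the form $A \cup \{i\}$, and then exploit the two facts that distinguish a loop $i$ of an $R$-matroid with torsion-free $\mathcal{M}(\emptyset)$: the rank equality $\rk(A\cup\{i\}) = \rk(A)$ (which holds for classical matroids and transfers to the generic matroid $\mathcal{M}_E\otimes Q(R)$), and the module isomorphism $\mathcal{M}(A) \simeq \mathcal{M}(A\cup\{i\})$ already given by Proposition \ref{lemma torsion-free loop}, which in particular forces $[\tor(A\cup\{i\})^{\vee}] = [\tor(A)^{\vee}]$ in $L_0(R\text{-mod})$.

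With these substitutions the two summands in \eqref{eq:Tutte decomposition} become sums over the same index set $A\subseteq [n]\setminus i$, with the same coefficients $[\tor(A)^{\vee}](x-1)^{r-\rk(A)}(y-1)^{|A|-\rk(A)}$, except that the $i$-containing sum carries an extra factor of $(y-1)$, coming from $|A\cup\{i\}| = |A| + 1$ while the ranks coincide. Collecting gives
\[
T_{\mathcal{M}}(x,y) = \bigl(1 + (y-1)\bigr) \sum_{A\subseteq [n]\setminus i} [\tor(A)^{\vee}](x-1)^{r-\rk(A)}(y-1)^{|A|-\rk(A)} = y\cdot S,
\]
where $S$ denotes the inner sum.

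To conclude I would identify $S$ with $T_{\mathcal{M}\setminus i}(x,y)$. By the definition of deletion, $(\mathcal{M}\setminus i)(A) = \mathcal{M}(A)$ for every $A\subseteq [n]\setminus i$, so the torsion classes $[\tor_{\mathcal{M}\setminus i}(A)^{\vee}]$ and ranks $\rk_{\mathcal{M}\setminus i}(A)$ agree with those appearing in $S$; moreover, because $i$ is a loop, the rank of $\mathcal{M}\setminus i$ is still $r$, so the exponent $r-\rk(A)$ matches the exponent appearing in Definition \ref{Definition:GT-domain} applied to $\mathcal{M}\setminus i$. Hence $S = T_{\mathcal{M}\setminus i}(x,y)$ and the claim follows.

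There is essentially no serious obstacle here: the heavy lifting was already done in Proposition \ref{lemma torsion-free loop}, whose hypothesis "$\mathcal{M}(\emptyset)$ torsion-free" is what allows us to upgrade the generic-matroid statement $\rk(A)=\rk(A\cup\{i\})$ to an actual isomorphism of modules $\mathcal{M}(A)\simeq \mathcal{M}(A\cup\{i\})$. The only point to be careful about is checking that the rank of $\mathcal{M}\setminus i$ equals the rank $r$ of $\mathcal{M}$, which is immediate from the loop hypothesis.
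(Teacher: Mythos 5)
Your proof is correct and follows essentially the same route as the paper's: apply Proposition \ref{lemma torsion-free loop} to identify $[\tor(A\cup\{i\})^\vee]$ with $[\tor(A)^\vee]$, use the rank equalities for a loop, and substitute into \eqref{eq:Tutte decomposition} to factor out $1+(y-1)=y$. You are somewhat more explicit than the paper in verifying that the remaining sum is literally $T_{\mathcal{M}\setminus i}(x,y)$ (in particular that the rank of $\mathcal{M}\setminus i$ is still $r$), which is a welcome bit of care but not a different argument.
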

	\begin{proof}
		From Proposition \ref{lemma torsion-free loop}, $[\tor(A)^\vee] = [\tor(A \cup \{i\})^\vee]$, further
		\[ \rk_{\mathcal{M}}(A \cup \{i\}) = \rk_{\mathcal{M}}(A) = \rk_{\mathcal{M} \setminus i}(A). \]
		Hence, by substituting in (\ref{eq:Tutte decomposition}) we obtain
		\[ T_{\mathcal{M}}(x,y) = T_{\mathcal{M} \setminus i}(x,y) + (y-1)T_{\mathcal{M} \setminus i}(x,y) = y T_{\mathcal{M} \setminus i}(x,y). \qedhere  \]
	\end{proof}


	\begin{proposition}\label{lemma no torsion coloop}
		If $\mathcal{M}([n]) = 0$ and $i \in [n]$ is a coloop, then for every $A \subseteq [n] \setminus i$ we have $\mathcal{M}(A) \simeq \mathcal{M}(A \cup \{i\}) \oplus R$, in particular $\tor(A) \simeq \tor(A \cup \{i\})$.
	\end{proposition}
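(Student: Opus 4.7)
The plan is to exhibit an element $z \in \mathcal{M}(A)$ satisfying $\mathcal{M}(A)/(z) \simeq \mathcal{M}(A \cup \{i\})$ and generating a free direct summand $Rz \simeq R$ of $\mathcal{M}(A)$; the decomposition $\mathcal{M}(A) \simeq \mathcal{M}(A \cup \{i\}) \oplus R$ then follows at once, and the torsion statement is formal upon applying the torsion functor, since $R$ is torsion-free over itself. I would first settle the ``top'' case $A = [n] \setminus i$: the matroid axiom applied to $[n] \setminus i$ with the element $i$ produces $x \in \mathcal{M}([n] \setminus i)$ with $0 = \mathcal{M}([n]) \simeq \mathcal{M}([n] \setminus i)/(x)$, so $\mathcal{M}([n] \setminus i) = Rx$ is cyclic. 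The coloop hypothesis forces $\dim_{Q(R)} \mathcal{M}([n] \setminus i) \otimes_R Q(R) = 1$, hence $\Ann(x) = 0$ and $\mathcal{M}([n] \setminus i) \simeq R$.

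For general $A \subseteq [n] \setminus i$, set $M = \mathcal{M}(A)$ and enumerate $[n] \setminus (A \cup \{i\}) = \{b_1, \dots, b_k\}$. The plan is to iterate the pair axiom along the chain that adds $b_1, \dots, b_k$ one at a time, carrying a fixed ``$i$-element'' $z \in M$ throughout the process. Starting from the pair axiom for $(A; b_1, i)$ one obtains $x_1, z \in M$ realizing the three required quotient isomorphisms; then, in each subsequent quotient $M/(x_1, \dots, x_{j-1})$, the pair axiom applied with the carried-forward image of $z$ playing the role of the $i$-element yields an element $x_j \in M$. Iterating produces $x_1, \dots, x_k, z \in M$ such that $\mathcal{M}(A \cup \{b_1, \dots, b_j\}) \simeq M/(x_1, \dots, x_j)$ for every $j$ and $0 = \mathcal{M}([n]) \simeq M/(x_1, \dots, x_k, z)$. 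Combined with the top case, the surjection $\pi_A \colon M \twoheadrightarrow M/(x_1, \dots, x_k) \simeq R$ satisfies $R/\pi_A(z) \simeq M/(x_1, \dots, x_k, z) = 0$, so $\pi_A(z)$ is a unit in $R$.

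Once $\pi_A(z)$ is a unit, elementary arguments give $Rz \cap \ker \pi_A = 0$ and $Rz + \ker \pi_A = M$, hence $M = Rz \oplus \ker \pi_A$, with $\ker \pi_A \simeq M/Rz = M/(z) \simeq \mathcal{M}(A \cup \{i\})$, concluding the proof. The main obstacle is the consistent iteration in the previous paragraph: one must ensure that at each stage the pair axiom can be invoked with the carried-forward image of $z$ as one of its two chosen elements, rather than with an unrelated element that merely satisfies the single-quotient condition. This bookkeeping, grounded in Definition \ref{definition matroid over ring}, hinges on the fact that the image of $z$ in every intermediate quotient remains a valid ``$i$-element'' — its further quotient yields the module obtained by also adjoining $i$ to the corresponding ground set.
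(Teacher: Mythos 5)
Your opening (the top case $A=[n]\setminus i$) matches the paper's base case exactly, but for general $A$ you take a genuinely different route — and it has a gap that you yourself flag but never close. Your plan is to apply the pair axiom repeatedly along the chain $A \subsetneq A\cup\{b_1\}\subsetneq\cdots\subsetneq[n]\setminus i$ while ``carrying forward'' a single element $z\in\mathcal{M}(A)$ so that at each stage the image of $z$ plays the role of the $i$-element. But Definition~\ref{definition matroid over ring} only guarantees, for each fixed pair $(b,c)$ and each fixed set, the \emph{existence} of \emph{some} pair of elements with the three required quotient isomorphisms; it says nothing about being able to choose those elements coherently across different applications. After the first application produces $x_1,z$, the second application (at $A\cup\{b_1\}$ with the pair $(b_2,i)$) produces some $\bar{x}_2,\bar{z}'$, and you only know $M/(x_1,\bar{z}')\simeq\mathcal{M}(A\cup\{b_1,i\})\simeq M/(x_1,z)$ — an abstract isomorphism of quotients, which does not force $\bar{z}'$ to be replaceable by the image of $z$. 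Establishing that it can be is essentially the content of ``the matroid is realizable along this chain,'' which is not available for an arbitrary $R$-matroid. So the sentence ``hinges on the fact that the image of $z$ in every intermediate quotient remains a valid $i$-element'' is asserting exactly the unproved step.

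The paper avoids this obstruction entirely by running a \emph{descending} induction on $|A|$ and invoking the pair axiom only \emph{once} per step. Starting from $P(A)$ (the statement for a set $A$), it passes to $A\setminus a$ by applying the axiom at $(A\setminus a;\,a,i)$ to obtain $y,z\in\mathcal{M}(A\setminus a)$, observes from the inductive hypothesis that the bottom row of
\begin{equation*}
\begin{tikzcd}
0 \arrow[r] & (z) \arrow[r] \arrow[d, "\simeq"'] & \mathcal{M}(A \setminus a) \arrow[r] \arrow[d, twoheadrightarrow] & \mathcal{M}(A \setminus a)/(z) \arrow[r] \arrow[d, twoheadrightarrow] & 0 \\
0 \arrow[r] & (\overline{z}) \arrow[r] & \mathcal{M}(A) \arrow[r] & \mathcal{M}(A)/(\overline{z}) \arrow[r] & 0
\end{tikzcd}
\end{equation*}
splits, and then lifts the splitting to the top row via the identity composition $(z)\hookrightarrow\mathcal{M}(A\setminus a)\twoheadrightarrow\mathcal{M}(A)\dashrightarrow(\overline{z})\simeq(z)$. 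This never requires a coherent family of choices along a long chain; it only needs one pair of elements at a time, together with the already-proved statement for the larger set. To fix your argument you would either need to prove the coherence claim (roughly, a realizability-along-a-chain statement for general $R$-matroids, which I do not think follows from the axioms), or switch to the paper's local-splitting descent.
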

	\begin{proof}
		Note that $\mathcal{M}$ is an essential matroid. 
		We want to prove the statement by induction on the cocardinality, $n-|A|$. For the base case, consider $[n]\setminus i$, by definition of matroid, we have $\mathcal{M}([n] \setminus i) / (x) \simeq \mathcal{M}([n])  = 0$ for some $x \in \mathcal{M}([n] \setminus i)$. Since $i \in [n]$ is a coloop, we have
		\[ Q(R) \simeq \mathcal{M}([n] \setminus i) \otimes Q(R) \simeq (x) \otimes Q(R), \]
		so $x \in \mathcal{M}([n] \setminus i)$ is not a torsion element, therefore $R \simeq (x) \simeq \mathcal{M}([n] \setminus i)$. For the inductive step, let $A \subseteq [n] \setminus i$ and $a \in A$. By definition of matroid, there exist $y,x \in \mathcal{M}(A \setminus a)$ such that
		\begin{align*}
		\mathcal{M}(A) & \simeq \mathcal{M}(A \setminus a)/(y) \\
		\mathcal{M}((A \cup \{i\}) \setminus a) & \simeq \mathcal{M}(A \setminus a) / (z) \\
		\mathcal{M}(A \cup \{i\}) & \simeq \mathcal{M}(A \setminus a)/ (y,z).
		\end{align*}
		By the inductive hypothesis, $\mathcal{M}(A) \simeq \mathcal{M}(A \cup \{i\}) \oplus R \simeq \mathcal{M}(A)/(\overline{z}) \oplus R$, in particular, $\overline{z}$ (and also $z$) is not a torsion element. Otherwise, tensoring by $Q(R)$, which is a flat $R$-module, we would have
		\[ \mathcal{M}(A) \otimes Q(R) \simeq \frac{\mathcal{M}(A) \otimes Q(R)}{(\overline{z}) \otimes Q(R)} \oplus Q(R) \simeq \big(\mathcal{M}(A) \otimes Q(R)\big) \oplus Q(R), \]
		which is a contradiction. Hence $(z) \simeq (\overline{z}) \simeq R$ and $\mathcal{M}(A) \simeq \mathcal{M}(A) /(\overline{z}) \oplus (\overline{z})$, so the second row of the following diagram splits
		\[ \begin{tikzcd}
		0 \arrow[r] & (z) \arrow[r] \arrow[d, leftrightarrow, "\simeq"] & \mathcal{M}(A \setminus a) \arrow[r] \arrow[d, twoheadrightarrow] & \mathcal{M}(A \setminus a) / (z) \arrow[r] \arrow[d, twoheadrightarrow] & 0 \\
		0 \arrow[r] & (\overline{z}) \arrow[r, shift right=0.5ex] & \mathcal{M}(A) \arrow[r] \arrow[l, dashed, shift right=0.5ex] & \mathcal{M}(A)/(\overline{z}) \arrow[r] & 0
		\end{tikzcd} \]
		further, since the composition $(z) \hookrightarrow \mathcal{M}(A \setminus a) \twoheadrightarrow \mathcal{M}(A) \dashrightarrow (\overline{z}) \simeq (z)$ is the identity on $(z)$, also the first row splits, so
		\[ \mathcal{M}(A \setminus a) \simeq \mathcal{M}(A \setminus a) / (z) \oplus (z) \simeq \mathcal{M}((A \cup \{i\}) \setminus a) \oplus R. \qedhere \]
	\end{proof}
	
	\begin{remark}
		Whenever the dual matroid exists $\mathcal{M}^*$, the previous result is obtained with the less restrictive hypothesis of $\mathcal{M}([n])=0$ being torsion free by observing that a coloop for $\mathcal{M}$ is actually a loop for $\mathcal{M}^*$.
	\end{remark}
	
	\begin{corollary}\label{no torsion coloop}
		If $\mathcal{M}([n]) = 0$ and $i \in [n]$ is a coloop, then
		\[ T_{\mathcal{M}}(x,y) = x T_{\mathcal{M} / i}(x,y) \]
	\end{corollary}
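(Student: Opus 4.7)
The plan is to mimic the proof of Corollary \ref{torsion-free loop}, substituting the structural result about coloops (Proposition \ref{lemma no torsion coloop}) in place of Proposition \ref{lemma torsion-free loop}. The starting point is the decomposition \eqref{eq:Tutte decomposition}, which splits $T_{\mathcal{M}}(x,y)$ into a sum over $A \subseteq [n]\setminus i$ and a sum over $A \cup \{i\}$.

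First, I would invoke Proposition \ref{lemma no torsion coloop} to obtain the key identification $[\tor(A)^\vee] = [\tor(A \cup \{i\})^\vee]$ for every $A \subseteq [n]\setminus i$; denote this common class $c_A$. Next, since $i$ is a coloop of the generic matroid, I would use the familiar rank identities $\rk_{\mathcal{M}}(A \cup \{i\}) = \rk_{\mathcal{M}}(A) + 1$ and $\rk_{\mathcal{M}/i}(A) = \rk_{\mathcal{M}}(A)$, which together give $|A \cup \{i\}| - \rk_{\mathcal{M}}(A \cup \{i\}) = |A| - \rk_{\mathcal{M}}(A)$ and $r - \rk_{\mathcal{M}}(A \cup \{i\}) = r - \rk_{\mathcal{M}}(A) - 1$.

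Plugging these in, the two summands of \eqref{eq:Tutte decomposition} share the same factor $(y-1)^{|A|-\rk(A)}$ and the exponent of $(x-1)$ differs by one. I would then factor out $(x-1)^{r-\rk(A)-1}(y-1)^{|A|-\rk(A)}$ and use $(x-1) + 1 = x$ to collapse the two sums into
\[
T_{\mathcal{M}}(x,y) = x \sum_{A \subseteq [n]\setminus i} c_A \, (x-1)^{(r-1)-\rk(A)} (y-1)^{|A|-\rk(A)}.
\]
Finally, recognizing that $\mathcal{M}/i$ has rank $r-1$, that $\tor_{\mathcal{M}/i}(A) = \torM(A \cup \{i\})$ (so the dual classes match $c_A$), and that $\rk_{\mathcal{M}/i}(A) = \rk_{\mathcal{M}}(A)$, the remaining sum is precisely $T_{\mathcal{M}/i}(x,y)$, yielding the claim.

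The only nontrivial step is the torsion identification, and that has already been dispatched by Proposition \ref{lemma no torsion coloop}; the rest is the same bookkeeping used for the loop case. The hypothesis $\mathcal{M}([n]) = 0$ is essential precisely to make Proposition \ref{lemma no torsion coloop} applicable, and it substitutes for the more natural dual hypothesis (that the analogous module for $\mathcal{M}^*$ be torsion-free) in the absence of a general duality theory for matroids over an arbitrary domain.
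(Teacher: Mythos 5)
Your proposal is correct and follows essentially the same route as the paper: invoke Proposition~\ref{lemma no torsion coloop} for the identification $[\tor(A)^\vee] = [\tor(A\cup\{i\})^\vee]$, use the coloop rank identity $\rk_{\mathcal{M}}(A\cup\{i\}) = \rk_{\mathcal{M}}(A)+1 = \rk_{\mathcal{M}/i}(A)+1$, and substitute into the decomposition~\eqref{eq:Tutte decomposition} so the two sums combine via $(x-1)+1=x$. The paper simply records the two sums as $(x-1)T_{\mathcal{M}/i}(x,y)$ and $T_{\mathcal{M}/i}(x,y)$ without the explicit factoring step, but the bookkeeping is the same.
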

	\begin{proof}
		From Proposition \ref{lemma no torsion coloop}, $[\tor(A)^\vee] = [\tor(A \cup \{i\})^\vee]$, further
		\[ \rk_{\mathcal{M}}(A \cup \{i\}) = \rk_{\mathcal{M}}(A)+1 = \rk_{\mathcal{M} / i}(A)+1. \]
		Hence, by substituting in (\ref{eq:Tutte decomposition}) we obtain
		\[ T_{\mathcal{M}}(x,y)  = (x-1)T_{\mathcal{M} / i}(x,y) + T_{\mathcal{M} / i}(x,y) = x T_{\mathcal{M} / i}(x,y). \qedhere  \]
	\end{proof}
	
	Finally, from Theorem \ref{tutte formula} and Corollaries \ref{torsion free tensor product} and \ref{no torsion coloop} we obtain a generalization of a well-known recursive formula for the Tutte polynomial for classical matroids.
	
	\begin{theorem}
		Let $R$ be a domain and $\mathcal{M}$ be an $R$-matroid of rank $r$ on the ground set $[n]$. If $\mathcal{M}(\emptyset)$ is torsion-free and if $\mathcal{M}([n]) = 0$, then
		\[ T_{\mathcal{M}}(x,y) = \begin{cases}
		y T_{\mathcal{M} \setminus i}(x,y) & \text{if $i$ is a loop,} \\
		x T_{\mathcal{M} / i}(x,y) & \text{if $i$ is a coloop,} \\
		T_{\mathcal{M} \setminus i}(x,y) + T_{\mathcal{M} / i}(x,y) & \text{otherwise.}
		\end{cases} \]
	\end{theorem}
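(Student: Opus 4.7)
The plan is to observe that this theorem is essentially a repackaging of the three results already established in this section, one for each case of the trichotomy. Nothing new has to be proved: we simply combine Theorem \ref{tutte formula}, Corollary \ref{torsion-free loop}, and Corollary \ref{no torsion coloop}, checking that the hypotheses of the current statement cover (and are in fact tailored to) each of them.

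Concretely, I would proceed as follows. Fix $i \in [n]$. If $i$ is neither a loop nor a coloop, apply Theorem \ref{tutte formula} directly; observe that that statement needs no additional assumption on $\mathcal{M}(\emptyset)$ or $\mathcal{M}([n])$, so the conclusion $T_{\mathcal{M}}(x,y) = T_{\mathcal{M}\setminus i}(x,y) + T_{\mathcal{M}/i}(x,y)$ holds in particular under our hypotheses. If $i$ is a loop, invoke Corollary \ref{torsion-free loop}, whose hypothesis that $\mathcal{M}(\emptyset)$ is torsion-free is precisely one of the two assumptions we are granting; its proof routes through Proposition \ref{lemma torsion-free loop}, where torsion-freeness is used (via Lemma \ref{torsion free tensor product}) to upgrade $(x)\otimes Q(R)=0$ to $x=0$ in the base case of an induction on $|A|$. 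If $i$ is a coloop, invoke Corollary \ref{no torsion coloop}, whose hypothesis $\mathcal{M}([n])=0$ is the other assumption we are granting; its proof uses Proposition \ref{lemma no torsion coloop}, a dual induction starting from $[n]\setminus i$ and using $i$ being a coloop to split off a free summand $R$.

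Since the three cases are exhaustive and mutually exclusive, combining these three results yields the piecewise formula in the statement. The only thing to verify is bookkeeping: each case relies on exactly the hypothesis we are assuming, and in the loop (resp.\ coloop) case the shared factor $[\tor(A)^\vee]$ in the two groups of terms of \eqref{eq:Tutte decomposition} is truly common, because Propositions \ref{lemma torsion-free loop} and \ref{lemma no torsion coloop} give $\tor_\mathcal{M}(A)\simeq \tor_\mathcal{M}(A\cup\{i\})$, which is what allows the factorization of $T_{\mathcal{M}}(x,y)$ into $y\,T_{\mathcal{M}\setminus i}(x,y)$ or $x\,T_{\mathcal{M}/i}(x,y)$, respectively.

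There is no genuine obstacle at this stage: all the technical work is already contained in the preceding propositions and corollaries, and the sole role of the theorem is to collect the three pieces into the familiar classical-looking deletion-contraction recursion. The most delicate conceptual point — understanding why torsion-freeness of $\mathcal{M}(\emptyset)$ is needed for loops while vanishing of $\mathcal{M}([n])$ is needed for coloops — has already been absorbed into the two propositions, so here we only need to invoke them.
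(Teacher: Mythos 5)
Your proposal is correct and matches the paper's own (one-line) argument exactly: the theorem is presented as an immediate consequence of Theorem \ref{tutte formula}, Corollary \ref{torsion-free loop}, and Corollary \ref{no torsion coloop}, with the two hypotheses on $\mathcal{M}(\emptyset)$ and $\mathcal{M}([n])$ exactly matching what those corollaries require. (Minor note: the paper's text cites ``\ref{torsion free tensor product}'' where it evidently means ``\ref{torsion-free loop}''; you cite the intended result.)
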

	

	\section{The Grothendieck $f$-vector}\label{sec:f-vector}
	Another combinatorial object that we are going to generalize for matroids over a domain is the $f$-vector. This should \emph{count} then number of elements of the matroid of a specific rank. 
	For matroids over a domain, this enumeration should run over the torsion submodules similarly as for the Tutte polynomial.
	
	
	\begin{definition}\label{def:f-vector}
		Let $\mathcal{M}$ be a matroid over a domain $R$ and let $L_0(R\text{-mod})$ be the ring defined in Section \ref{sec:preliminary-Tutte}. 
		Recall that $\Delta \mathcal{M}$ is the independent complex of the generic matroid of $\mathcal{M}$.
		We define 
		\[
		f_{i-1}=\sum_{\substack{A\in \Delta \mathcal{M}, \\ |A|=i}} [\tor (A)^{\vee}]\in L_0(R\text{-mod})
		\]
		We refer to the vector $(f_{-1}, f_{0}, \dots, f_{r-1})$ in $L_0(R\text{-mod})^{r+1}$ as the Grothendieck $f$-vector of the matroid  $\mathcal{M}$.
	\end{definition}
	
	We are going to see in Section \ref{sec:poset of torsions}, that the definition of Grothendieck $f$-vector is inspired by the $f$-vector of a certain simplicial poset.
	
	One can also define the prototype of the $h$-vector by using the classical relation:
	\[
	\sum_{i=0}^r f_{i-1} (t-1)^{r-i} = \sum_{i=0}^r h_i t^{r-i}\in L_0(R\text{-mod})[t].
	\]
	The left hand side is called the $f$-polynomial and denoted by $f_{\mathcal{M}}(t)$.

	As in the classical case and in the arithmetic case, the Tutte polynomial does save a lot of combinatorial information. For instance, we can read back the $f$-vector of the matroid $\mathcal{M}$.
	
	\begin{theorem}\label{thm:tutte-f-vector}
		Let $R$ be a domain and $\mathcal{M}$ be an $R$-matroid of rank $r$ on the ground set $[n]$. Then, the $f$-polynomial $f_{\mathcal{M}}(t)$ is a specialization of the Tutte polynomial, i.e.
		\[ 
		T_{\mathcal{M}}(t,1) = \sum_{i=0}^r f_{i-1} (t-1)^{r-i};
		\]
		Moreover, 
		\begin{equation}\label{eq:almost-hilbert-series}
		\frac{t^r}{(1-t)^r} T_{\mathcal{M}}\left(\nicefrac{1}{t},1\right) = \sum_{i=0}^r f_{i-1} \frac{t^i}{(1-t)^i}.
		\end{equation}
	\end{theorem}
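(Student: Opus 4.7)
The plan is to prove both identities by direct substitution into the definition of the Grothendieck--Tutte polynomial and reorganizing the sum. The key observation is that specializing $y = 1$ kills every term indexed by a dependent subset, so the sum collapses onto $\Delta\mathcal{M}$, which is exactly where the Grothendieck $f$-vector lives.

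First I would set $y=1$ in Definition~\ref{Definition:GT-domain}. For $A \subseteq [n]$ the factor $(y-1)^{|A|-\rk(A)}$ equals $1$ when $|A| = \rk(A)$ (that is, when $A \in \Delta\mathcal{M}$) and vanishes otherwise, since $|A| - \rk(A) > 0$ for dependent sets. Hence
\[
T_{\mathcal{M}}(t,1) = \sum_{A \in \Delta\mathcal{M}} [\tor(A)^\vee]\,(t-1)^{r-|A|}.
\]
Grouping the terms by cardinality $|A| = i$ and pulling the power of $(t-1)$ outside the inner sum, the coefficient of $(t-1)^{r-i}$ becomes exactly $f_{i-1}$ by Definition~\ref{def:f-vector}. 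This establishes the first formula.

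For the second identity, I would substitute $x = 1/t$ in the first formula to get
\[
T_{\mathcal{M}}\!\left(\tfrac{1}{t},1\right) = \sum_{i=0}^{r} f_{i-1}\,\left(\tfrac{1-t}{t}\right)^{r-i},
\]
and then multiply both sides by $t^r/(1-t)^r$. The factor $\left(\tfrac{1-t}{t}\right)^{r-i}$ combines with $\tfrac{t^r}{(1-t)^r}$ to yield $\tfrac{t^i}{(1-t)^i}$, which is the desired right-hand side.

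There is no real obstacle here: the argument is purely formal once one notices that $y = 1$ annihilates the contribution of dependent sets. The only point to check carefully is the indexing convention $f_{i-1} = \sum_{|A|=i,\,A\in\Delta\mathcal{M}}[\tor(A)^\vee]$, which matches the exponent $(t-1)^{r-i}$ appearing after the substitution. Since $L_0(R\text{-mod})[t]$ is a commutative polynomial ring and the identities to be checked live in the localization $L_0(R\text{-mod})[t,(1-t)^{-1}]$, the algebraic manipulations are valid verbatim.
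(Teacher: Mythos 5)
Your proof is correct and takes essentially the same approach as the paper: set $y=1$ so that the terms with $|A| > \rk(A)$ vanish, collapse the sum to $\Delta\mathcal{M}$, group by cardinality to identify the coefficients $f_{i-1}$, and then substitute $1/t$ for $t$ and clear denominators to get the second identity. The added comment about justifying the manipulation in the localization $L_0(R\text{-mod})[t,(1-t)^{-1}]$ is a reasonable point of care but does not change the substance of the argument.
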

	\begin{proof}
		We simply evaluate the Tutte polynomial at (1,t) and we obtain readily that:
		\[ \begin{split}
		T_{\mathcal{M}}(t,1) & = \sum_{A\in \Delta \mathcal{M}} [\tor(A)^{\vee}](t-1)^{r-|A|} = \sum_{i=0}^r \left( \sum_{\substack{A\in \Delta \mathcal{M}, \\ |A|=i}} [\tor(A)^{\vee}] \right) (t-1)^{r-i} = \\
		& = \sum_{i=0}^r f_{i-1} (t-1)^{r-i}.
		\end{split} 
		\]
		For the second statement, we substitute in the previous result $\nicefrac{1}{t}$ instead of $t$.
	\end{proof}
	
	\begin{remark}
		The previous proposition shows how far the connection between the enumerative Combinatorics and Algebra can be pushed over a generic domain $R$.
		
		\noindent
		In the introduction we have explained that, in literature, the proper statement (see Section A.3 in \cite{DC-P-Box}) would link $\frac{t^r}{(1-t)^r} T_{\mathcal{M}^*}\left(1, \nicefrac{1}{t}\right)$ to the Hilbert series of the face module of a classical matroid or a $\mathbb{Z}$-matroid. 
		Indeed, in the classical case (when $R$ is a field or when $R$ is the ring of integers) the right hand side of (\ref{eq:almost-hilbert-series}) is the Hilbert series $N_{\Gr \mathcal{M}}(t)$ of the poset of torsion $\Gr \mathcal{M}$.
		We are going to show that so it is for many other case of matroids over a domain in Section \ref{Sec:ring-of-integers}.
	\end{remark}

	\section{The poset of torsions}\label{sec:poset of torsions}

	In this section, $R$ is a domain with field of fractions $Q(R)$, and $\mathcal{M}$ is a realizable $R$-matroid on the ground set $[n]$. Fix a realization $\psi:[n] \rightarrow \mathcal{M}(\emptyset)$ of $\mathcal{M}$. 
	Recall that $\Delta \mathcal{M}$ is the independent complex of the generic matroid of $\mathcal{M}$.

	Let $A$ be a subset of $[n]$ and $b \in [n] \setminus A$ such that $A \cup \{b\} \in \Delta \mathcal{M}$. Then, of course, $A \in \Delta \mathcal{M}$ and $\psi(b) \in \mathcal{M}(\emptyset)$ is not a torsion element. 
	By Definition \ref{definition matroid over ring} there is the quotient map
	\[ \overline{\pi_{A,b}}:\mathcal{M}(A) \rightarrow \mathcal{M}(A \cup \{b\}) \simeq \mathcal{M}(A) / (\psi(b)). \]
	
	\noindent
	We will denote the torsion part of $\mathcal{M}(A)$ by
	$\torM{A}$, when there is no ambiguity, simply by $\tor{A}$. If we restrict $\overline{\pi_{A,b}}$ to $\tor{A}$, from Lemma \ref{injection of torsion}, we obtain an injective map denoted by
	\[ \pi_{A,b}: \tor (A) \rightarrow \tor (A \cup \{b\}). \]
	Recall that $\vee$ denote the application of the controvariant functor $\Hom(-,\nicefrac{Q(R)}{R})$, so that $\tor (A)^{\vee} = \Hom(\tor (A),\nicefrac{Q(R)}{R})$. We obtain the sujective map
	\[ \pi_{A,b}^{\vee}:\tor (A \cup \{b\})^{\vee} \twoheadrightarrow \tor (A)^{\vee}. \]
	
	\begin{remark}
		Since $R$ is a domain, $\tor \mathcal{M}(A)$ is $\operatorname{Tor}_1^{R}(\mathcal{M}(A), \nicefrac{Q(R)}{R})$, see Exercise 3.1.2 in \cite{MR1269324}.
		Moreover, $\tor (A)^{\vee} = \Hom(\tor (A),\nicefrac{Q(R)}{R})$ is usually called the \emph{full dual} of the $R$-module $\tor (A)^{\vee}$ and, by definition, this can also be written as  $\operatorname{Ext}_R^{0}(\tor \mathcal{M}(A), \nicefrac{Q(R)}{R})$. 
	\end{remark}

	The next definition generalizes the poset of torsions introduced in \cite{martino2017face} for matroids over $\mathbb{Z}$ by the second author.
	
	\begin{definition}
		Let $\mathcal{M}$ be a realizable matroid over $R$, then
		\[ \Gr \mathcal{M} = \{ (A,l) : A \in \Delta \mathcal{M}, \, l \in \tor (A)^{\vee} \}, \]
		is the \emph{set of torsions} of $\mathcal{M}$. We define an order on $\Gr \mathcal{M}$ by providing the covering relations. If $(A \cup \{b\},h),(A,l) \in \Gr \mathcal{M}$, then we set
		\[ (A,l) \triangleleft (A \cup \{b\},h) \stackrel{\text{def}}{\Longleftrightarrow} \pi_{A,b}^{\vee}(h)=l. \]
		
	\end{definition}
	
	The following example shows that the set of torsions $\Gr \mathcal{M}$ may be infinite.
	
	\begin{example}
		Set $R = \mathbb{Q}[x]$ and let $\mathcal{M}$ be the $\mathbb{Q}[x]$-matroid on the ground set $\{1\}$ defined by
		\[ \mathcal{M}(\emptyset) = \mathbb{Q}[x], \quad \mathcal{M}(\{1\}) = \nicefrac{\mathbb{Q}[x]}{(x)} \simeq \mathbb{Q}. \]
		Then the set of torsions $\Gr \mathcal{M} = \{ (\emptyset,e),(\{1\},q) : q \in \mathbb{Q} \}$ is infinite and as a poset every element $(\{1\},q)$ covers $(\emptyset,e)$, while $(\{1\},q)$ and $(\{1\},q')$ are uncomparable if $q\neq q'$.
	\end{example}

	\begin{proposition}\label{unique cover}
		Let $\mathcal{M}$ be a realizable matroid over $R$ and let $\psi:[n] \rightarrow \mathcal{M}(\emptyset)$ be a realization of $\mathcal{M}$. For every $(A,h) \in \Gr \mathcal{M}$ and $B \subseteq A$ there exists a unique $l \in \tor (B)^{\vee}$ such that $(B,l) \leq (A,h)$ in $\Gr \mathcal{M}$.
	\end{proposition}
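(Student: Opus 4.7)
The strategy is to realize the putative $l$ as the image of $h$ under a single canonical surjection $\pi_{A,B}^{\vee} : \tor(A)^{\vee} \twoheadrightarrow \tor(B)^{\vee}$ that does not depend on any chain, and then recognize every chain of covers going up from $(B,l)$ to $(A,h)$ as producing exactly this canonical map after dualizing. This will give existence and uniqueness in one stroke.

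For \emph{existence}, I would choose any linear order on $A \setminus B = \{b_0, b_1, \dots, b_{k-1}\}$ and form the chain $B = B_0 \subsetneq B_1 \subsetneq \dots \subsetneq B_k = A$ with $B_{i+1} = B_i \cup \{b_i\}$. Since $A$ is independent in $\Delta \mathcal{M}$, so is every $B_i$, and at each step $\psi(b_i)$ is a non-torsion element of $\mathcal{M}(B_i)$, so Lemma \ref{injection of torsion} applies and the dual maps $\pi_{B_i,b_i}^{\vee}$ are surjective. Setting $l_k = h$ and defining $l_i = \pi_{B_i,b_i}^{\vee}(l_{i+1})$ recursively produces an element $l = l_0 \in \tor(B)^{\vee}$ together with a chain of covers $(B,l) = (B_0,l_0) \triangleleft \cdots \triangleleft (B_k,l_k) = (A,h)$.

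For \emph{uniqueness}, I would introduce the global quotient map $\overline{\pi_{A,B}} : \mathcal{M}(B) \to \mathcal{M}(A)$, which is well defined in the realizable setting since $\mathcal{M}(A) \simeq \mathcal{M}(B)/(\psi(i): i \in A\setminus B)$. Iterating Lemma \ref{injection of torsion} along any chain as above shows that its restriction $\pi_{A,B} : \tor(B) \to \tor(A)$ is injective, hence the dual $\pi_{A,B}^{\vee} : \tor(A)^{\vee} \twoheadrightarrow \tor(B)^{\vee}$ is surjective. The critical observation is functoriality of successive quotients: the composition $\overline{\pi_{B_{k-1},b_{k-1}}} \circ \cdots \circ \overline{\pi_{B_0,b_0}}$ equals $\overline{\pi_{A,B}}$, so restricting to torsion and applying the contravariant functor $\Hom(-,\nicefrac{Q(R)}{R})$ gives
\[
\pi_{B_0,b_0}^{\vee} \circ \cdots \circ \pi_{B_{k-1},b_{k-1}}^{\vee} = \pi_{A,B}^{\vee},
\]
independently of the chosen ordering of $A \setminus B$. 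Consequently any chain of covers from some $(B,l)$ up to $(A,h)$ forces $l = \pi_{A,B}^{\vee}(h)$, so $l$ is determined by $(A,h)$ and $B$ alone.

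The main technical point I anticipate is verifying that the step-by-step quotients on torsion parts genuinely compose to $\pi_{A,B}|_{\tor(B)}$; this is a small diagram chase using the explicit realizability presentation $\mathcal{M}(C) = \mathcal{M}(\emptyset)/(\psi(i): i\in C)$ together with the fact that $\Hom(-,\nicefrac{Q(R)}{R})$ reverses composition. Everything else is a straightforward bookkeeping argument on chains of covers in $\Gr\mathcal{M}$.
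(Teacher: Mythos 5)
Your proof is correct and follows essentially the same route as the paper's: both establish existence by iterating $\pi_{B_i,b_i}^{\vee}$ along any fixed ordering of $A\setminus B$, and both establish uniqueness by observing that the composition of the successive quotient maps on $\mathcal{M}(\,\cdot\,)$ equals the single global quotient $\mathcal{M}(B)\to\mathcal{M}(B)/(\psi(j):j\in A\setminus B)$ independently of the ordering, so that dualizing forces $l=\pi_{A,B}^{\vee}(h)$. You make the functoriality step slightly more explicit than the paper, but the substance is identical.
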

	\begin{proof}
		We start by showing the existence. Set $A \setminus B = \{ b_1,\dots,b_m \}$, $B_0 = B$ and $B_{i+1} = B_i \cup \{ b_{i+1} \}$ for every $i \in \{ 1,\dots,m\}$. Further, set $l_0 = h$, $l_{i+1} = \pi_{B_i,b_{i+1}}^{\vee}(l_i)$ for every $i \in \{ 1,\dots,m\}$ and $l = l_m$. Then, by definition we have a chain
		\[ (B,l) \triangleleft (B_1,l_{m-1}) \triangleleft \dots \triangleleft (B_{m-1},l_1) \triangleleft (A,h) \]
		therefore $(B,l) \leq (A,h)$.
		
		About the uniqueness, it is enough to note that the choice of $l$ does not depend on the order of the elements $b_i$. In fact, for any order of the elements $b_i$, the composition of the corresponding maps $\overline{\pi_{B_i,b_{i+1}}}$ will give always the same quotient map
		\[ \pi:\mathcal{M}(B) \rightarrow \mathcal{M}(A) \simeq \mathcal{M}(B) / (\psi(j): j \in A \setminus B), \]
		and $l = \pi_{|\tor(B)}^{\vee}(h)$.
	\end{proof}
	
	We denote with $e \in \tor(\emptyset)^{\vee}$ the identity element of the torsion module $\tor(\emptyset)^{\vee}$.
	
	\begin{theorem}\label{poset of torsions simplicial poset}
		For every realizable matroid $\mathcal{M}$ over $R$ such that $\mathcal{M}(\emptyset)$ is torsion-free, $\Gr \mathcal{M}$ is a simplicial poset.
	\end{theorem}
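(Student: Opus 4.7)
The plan is to verify the two defining conditions of a simplicial poset: existence of a minimum, and boolean structure of each interval $[\hat{0},a]$.

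First, I would identify the minimum. Since $\mathcal{M}(\emptyset)$ is torsion-free, $\tor(\emptyset)=0$, hence $\tor(\emptyset)^{\vee}=0$ and its only element is the identity $e$. Applying Proposition~\ref{unique cover} with $B=\emptyset$, for every $(A,h)\in \Gr \mathcal{M}$ there is a unique $l\in \tor(\emptyset)^{\vee}$ with $(\emptyset,l)\leq (A,h)$; this forces $l=e$. Therefore $(\emptyset,e)$ is the minimum $\hat{0}$ of $\Gr \mathcal{M}$.

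Next, fix $(A,h)\in \Gr \mathcal{M}$ of rank $|A|=m$ and analyze the interval $[(\emptyset,e),(A,h)]$. For every subset $B\subseteq A$, Proposition~\ref{unique cover} gives a unique $l_B\in \tor(B)^{\vee}$ such that $(B,l_B)\leq (A,h)$. I would define
\[
\Phi:2^A \longrightarrow [(\emptyset,e),(A,h)], \qquad B\longmapsto (B,l_B).
\]
By the uniqueness clause in Proposition~\ref{unique cover}, $\Phi$ is well-defined and injective, and surjectivity is immediate because any element $(B,l)$ in the interval must satisfy $B\subseteq A$ (since covers increase the first coordinate by one element of $A$) and $l$ is uniquely determined. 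Thus $\Phi$ is a bijection between $2^A$ and the interval.

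I then need to check that $\Phi$ is order-preserving in both directions. If $B\subseteq C\subseteq A$, the quotient maps compose so that the chain of covers from $(B,l_B)$ through intermediate sets to $(C,l_C)$ can be built inductively by adding one element at a time, showing $(B,l_B)\leq (C,l_C)$. Conversely, if $(B,l)\leq (C,l')\leq (A,h)$, then by definition of the order (a chain of covers, each of which extends the subset by one element) we get $B\subseteq C$; the second coordinates $l,l'$ are then forced to be $l_B,l_C$ by uniqueness. Hence $\Phi$ is an isomorphism of posets, so $[(\emptyset,e),(A,h)] \cong 2^A$, which is a boolean lattice of rank $|A|$. This verifies the simplicial poset axiom for every element.

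The main subtle point is the order-reflecting direction: one must ensure that no unexpected cover relations produce a non-boolean interval, but this is ruled out precisely by the uniqueness statement in Proposition~\ref{unique cover}, which guarantees each subset $B\subseteq A$ corresponds to exactly one element of the interval. Routine bookkeeping on chains of covers then identifies the covering relations in $[(\emptyset,e),(A,h)]$ with those in $2^A$, completing the proof.
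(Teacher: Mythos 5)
Your proposal is correct and follows essentially the same approach as the paper: both identify $(\emptyset,e)$ as the minimum (using the torsion-free hypothesis to guarantee $\tor(\emptyset)^{\vee}$ is trivial) and then invoke Proposition~\ref{unique cover} to show that each interval $[(\emptyset,e),(A,h)]$ is in bijection with, and isomorphic as a poset to, the Boolean lattice $2^A$. You simply spell out the order-preserving and order-reflecting checks that the paper leaves implicit.
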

	\begin{proof}
		The element $(\emptyset,e)$ is the minimum of $\Gr \mathcal{M}$. Now let $(A,h) \in \Gr\mathcal{M}$, and set $I = [(\emptyset,e),(A, h)]$.  From Proposition \ref{unique cover}, for every subset $B \subseteq A$, there exists a unique $l \in \tor(B)^{\vee}$ such that $(B,l) \in I$. Thus, $I$ is isomorphic, as a poset, to the Boolean lattice $2^A$.
	\end{proof}
	
	If $P$ is a poset, for every $a \in P$ we define the \emph{link} of $a$ by
	\[ \link_P a = \{ b \in P : a \leq b \}, \]
	and when there is no ambiguity about the poset, we simply write $\link a$.

	\begin{proposition}\label{link are isomorphic}
		
		Let $\mathcal{M}$ be a realizable matroid over $R$. Let $A$ be an element of $\Delta\mathcal{M}$, i.e. an independent set for the (classical) generic matroid $\mathcal{M}_E\otimes Q(R)$.
		
		\noindent
		For every $t \in \tor(A)^\vee$, $\link (A,e)$ is isomorphic to $\link (A,t)$ as poset.
		In particular, $\link (\emptyset,e)$ is isomorphic to $\link (\emptyset,t)$, for each $t \in \tor(\emptyset)^\vee$.
	\end{proposition}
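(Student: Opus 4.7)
The strategy is to build an order-preserving bijection $\phi:\link(A,e)\to \link(A,t)$ by translating the second coordinate by a coherent family of lifts of $t$.

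First, by Proposition \ref{unique cover} the link admits the simple description
\[ \link(A,l) = \{(B,h)\in \Gr\mathcal{M} : A\subseteq B,\ \pi_{A,B}^\vee(h) = l\}, \]
where $\pi_{A,B}^\vee:\tor(B)^\vee \twoheadrightarrow \tor(A)^\vee$ is the composition of dual cover maps along any chain from $A$ to $B$ (well-defined by commutativity of the dual system). Since this map is a surjective $R$-module homomorphism, its fibres over $e=0$ and over $t$ are cosets of the same kernel, and fixing any element of the fibre over $t$ converts set-theoretic difference into an explicit bijection between the two fibres.

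Second, I would construct a coherent family $\{t_B\in \tor(B)^\vee\}_{A\subseteq B\in \Delta\mathcal{M}}$ with $t_A=t$ and satisfying $\pi_{B,B'}^\vee(t_{B'})=t_B$ for every $A\subseteq B\subseteq B'$, by induction on $|B\setminus A|$. At the inductive step, the previously defined $t_{B\setminus\{b\}}$ are already pairwise compatible over the intersections $\tor(B\setminus\{b,b'\})^\vee$ by the hypothesis, so the data to be lifted is consistent; a simultaneous lift $t_B$ is then produced by a diagram chase using the surjectivity of every $\pi_{B\setminus\{b\},B}^\vee$.

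Given such a family, set $\phi(B,h)=(B,h+t_B)$, with set-theoretic inverse $(B,h')\mapsto(B,h'-t_B)$. Preservation of covers is immediate: $(B,h)\triangleleft(B\cup\{b\},h')$ in $\link(A,e)$ reads $\pi_{B,b}^\vee(h')=h$, whence by $R$-linearity and the compatibility $\pi_{B,b}^\vee(t_{B\cup\{b\}})=t_B$,
\[ \pi_{B,b}^\vee(h'+t_{B\cup\{b\}}) = h+t_B, \]
which is exactly the cover $\phi(B,h)\triangleleft \phi(B\cup\{b\},h')$ in $\link(A,t)$. The same calculation applied to $\phi^{-1}$ closes the argument, and the special case $A=\emptyset$ is just the last statement of the proposition.

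The main obstacle is the coherent lifting: although every individual dual map is surjective, the existence of a single $t_B$ projecting simultaneously to all the $t_{B\setminus\{b\}}$ is an inverse-limit compatibility that is not automatic for a general domain (it would follow formally if $Q(R)/R$ were $R$-injective, as over a Dedekind domain). The required diagram chase relies on the fact that the whole system of dual cover maps comes from the single realization $\psi$, which builds in enough commutativity to make the sibling constraints compatible at each inductive step.
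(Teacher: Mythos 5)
Your proof takes essentially the same route as the paper's: the paper also constructs the isomorphism by choosing a coherent family of preimages $m_b, m_{bc}, \dots$ of $t$ along the poset of supersets of $A$ and then translating the kernel fibres by them, building the bijection rank by rank. The subtlety you flag in your final paragraph---that the existence of a simultaneous lift $t_B$ over all sibling subsets $B\setminus\{b\}$ is not formal without some injectivity of $Q(R)/R$ (as in the Dedekind case), and must be extracted from the fact that the whole system of maps comes from one realization---is precisely the step the paper also leaves as a bare assertion (``therefore there exist $m_{bc}$ \dots''), so your explicit awareness of it is, if anything, a strength of your write-up.
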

	\begin{proof}
		From Proposition \ref{unique cover}, the two links do not intersect. 
		We are going to define the isomorphism from $\link (A,e)$ to $\link (A,t)$, by providing the image of the low rank element first.
		
		The atoms of $\link (A,e)$ are in bijection with the elements of the kernels of the surjective maps $\pi^{\vee}_{A,b}:\tor(A\cup \{b\})^\vee \rightarrow \tor(A)^\vee$, for all $b$ such that $A\cup \{b\}$ is an independent set of the generic matroid of $\mathcal{M}$.
		
		Similarly, the atoms of $\link (A,t)$ are in bijections with elements of the cosets $m_b + \operatorname{ker} \pi^{\vee}_{A,b}$ for the same $b$ as above and for some $m_b$ in ${\pi^{\vee}_{A,b}}^{-1}(t)$.
		The correspondence among the atoms of $\link (A,e)$ and $\link (A,t)$ is given by sending an element $x$ of $\operatorname{ker} \pi^{\vee}_{A,b}$ to $m_b+x$ in $m_b + \operatorname{ker} \pi^{\vee}_{A,b}$.

		We repeat the same construction to the atoms of $\link (A\cup \{b\},h)$, extending the map to rank two elements and so on.
		We only need to take care that the choices of $m_b$ are coherent all along the construction. This comes from the fact that $\mathcal{M}$ is realizable and, as shown in the proof of Proposition \ref{unique cover}, each map  $\pi^{\vee}_{-,-}$ is given by a specific quotient:
		\[ \begin{tikzcd}
		\tor(A)^\vee & \tor(A \cup \{b\})^\vee \arrow[l, "\pi_{A,b}^\vee"'] \\
		\tor(A\cup\{c\})^\vee \arrow[u, "\pi_{A,c}^\vee"'] & \tor(A \cup \{b, c\})^\vee \arrow[u, "\pi_{A\cup \{b\},c}^\vee"] \arrow[l, "\pi_{A\cup \{c\},b}^\vee"].
		\end{tikzcd} \]
		Therefore there exist $m_{bc}$ in  $\tor(A \cup \{b, c\})^\vee$ that maps to $m_b$ in  $\tor(A \cup \{b\})^\vee$, to $m_c$ in  $\tor(A \cup \{c\})^\vee$, and that extends the bijection among the atoms to the a bijection among the rank two elements of $\link (A,e)$ and $\link (A,t)$
		%
	\end{proof}
	
	
	\begin{proposition}\label{link simplicial poset}
		Let $\mathcal{M}$ be a realized matoid over $R$, then $\link_{\Gr \mathcal{M}}(\emptyset,e)$ is a simplicial poset.
	\end{proposition}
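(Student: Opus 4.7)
The plan is to mimic the proof of Theorem \ref{poset of torsions simplicial poset} almost verbatim, with the only modification being that we localize everything at $(\emptyset, e)$ so that torsion in $\mathcal{M}(\emptyset)$ is no longer an obstacle. The crucial observation is that Proposition \ref{unique cover} makes no assumption on $\tor \mathcal{M}(\emptyset)$, so its conclusion is available to us here.

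First, I would check that $(\emptyset, e)$ is the minimum of $\link_{\Gr \mathcal{M}}(\emptyset, e)$. This is immediate from the definition of the link, since every element of the link is by definition $\geq (\emptyset, e)$. Next, I would fix an arbitrary $(A, h) \in \link_{\Gr \mathcal{M}}(\emptyset, e)$ and analyze the interval $I = [(\emptyset, e), (A, h)]$. I would note that any element of $\Gr \mathcal{M}$ below $(A, h)$ must have first coordinate a subset $B \subseteq A$ (this follows from how covering relations increase the first coordinate by a single element), and by Proposition \ref{unique cover} there is a unique $l_B \in \tor(B)^\vee$ with $(B, l_B) \leq (A, h)$. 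In particular, applying uniqueness with $B = \emptyset$ and using $(\emptyset, e) \leq (A, h)$, we have $l_\emptyset = e$; so every element of the interval $I$ automatically lies above $(\emptyset, e)$, hence $I$ as computed in $\Gr \mathcal{M}$ coincides with $I$ as computed in $\link_{\Gr \mathcal{M}}(\emptyset, e)$.

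Finally, I would establish the poset isomorphism $I \simeq 2^A$. The map $B \mapsto (B, l_B)$ gives a bijection between subsets of $A$ and elements of $I$, and it is order-preserving in both directions: if $B \subseteq B' \subseteq A$ then the chain of covers from Proposition \ref{unique cover} produces $(B, l_B) \leq (B', l_{B'}) \leq (A, h)$, while conversely $(B, l_B) \leq (B', l_{B'})$ forces $B \subseteq B'$ by the shape of the covering relation. Hence $I$ is isomorphic to the boolean lattice $2^A$, and so every principal order-ideal in $\link_{\Gr \mathcal{M}}(\emptyset, e)$ is boolean, making the link a simplicial poset.

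I do not expect a genuine obstacle here: the whole argument is bookkeeping on top of Proposition \ref{unique cover}, whose proof already encodes the independence of the choice of order of the elements of $A \setminus B$. The only minor subtlety to be careful about is to ensure that computing the interval inside the link yields the same set as computing it inside $\Gr \mathcal{M}$, which, as noted, is automatic once one observes that $l_\emptyset$ must equal $e$.
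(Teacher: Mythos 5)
Your proof is correct, and it takes a genuinely different and more direct route than the paper. The paper constructs an auxiliary matroid $\mathcal{M}'$ by setting $\mathcal{M}'(A) = \mathcal{M}(\emptyset)/(\tor(\emptyset)+\psi[A])$, verifies that $\mathcal{M}'(\emptyset)$ is torsion-free, builds a poset isomorphism $\Gr \mathcal{M}' \simeq \link_{\Gr \mathcal{M}}(\emptyset,e)$ via dualized quotient maps and commutative squares, and then invokes Theorem \ref{poset of torsions simplicial poset}. You bypass all of that machinery and check the simplicial-poset axiom on the link directly, which works precisely because Proposition \ref{unique cover} holds without any hypothesis on $\tor(\emptyset)$: the minimum is $(\emptyset,e)$ by definition of link, and for $(A,h)$ in the link each $B\subseteq A$ has a unique $l_B$ with $(B,l_B)\leq(A,h)$, and a double application of uniqueness (once at $\emptyset\subseteq B$ and once at $\emptyset\subseteq A$) forces $(\emptyset,e)\leq(B,l_B)$, so the interval is the boolean lattice on $A$. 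The one small point you could make sharper is that "so every element of the interval $I$ automatically lies above $(\emptyset,e)$" hides exactly this double-uniqueness step, which is the crux; it is worth spelling out. The trade-off is that the paper's longer argument delivers, as a byproduct, the isomorphism $\Gr \mathcal{M}' \simeq \link_{\Gr \mathcal{M}}(\emptyset,e)$, which it reuses later for the alternative description $N_\mathcal{M} = A_{\mathcal{M}'}^{|\tor(\emptyset)|}$ of the face module and in Lemma \ref{quotient out torsion}; with your approach that fact would need a separate argument.
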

	\begin{proof}
		Let $\psi:[n] \rightarrow \mathcal{M}(\emptyset)$ be a realization of $\mathcal{M}$. For every $A$ subset of $[n]$ we denote by $\psi[A] = (\psi(i):i \in A)$ and one defines 
		\[ \mathcal{M}'(A) = \frac{\mathcal{M}(\emptyset)}{\tor(\emptyset) + \psi[A]} \simeq \frac{\mathcal{M}(A)}{(\tor(\emptyset) + \psi[A]) / \psi[A]}. \]
		It is clear that $\mathcal{M}'$ is a realizable matroid over $R$ and a realization is given by the composition of $\psi$ with the quotient map $\mathcal{M}(\emptyset) \rightarrow \nicefrac{\mathcal{M}(\emptyset)}{\tor(\emptyset)}$.  We want to show that $\Gr \mathcal{M}'$ is isomorphic to $\link_{\Gr \mathcal{M}} (\emptyset,e)$ as posets. Since $\mathcal{M}'(\emptyset)$ is torsion-free, from Theorem \ref{poset of torsions simplicial poset} it will follow that $\link_{\Gr \mathcal{M}}(\emptyset,e)$ is a simplicial poset. 
		Since 
		\[ 	\mathcal{M}'(A) \otimes Q(R) \simeq \frac{\mathcal{M}(\emptyset) \otimes Q(R)}{\big(\tor(\emptyset) + \psi[A]\big) \otimes Q(R)} \simeq \frac{\mathcal{M}(\emptyset) \otimes Q(R)}{\psi[A] \otimes Q(R)} \simeq \mathcal{M}(A) \otimes Q(R), \]
		then $\mathcal{M}' \otimes Q(R) \simeq \mathcal{M} \otimes Q(R)$. 
		
		For every subset $A$, let $\phi_A: \tor(A) \rightarrow \tor'(A)$ be the restriction to $\tor(A)$ of the quotient map from $\mathcal{M}(A)$ to $\mathcal{M}'(A)$, where $\tor'(A)$ denote the torsion part of $\mathcal{M}'(A)$. Consider its dual $\phi_A^{\vee}$. We want to show that
		\[ \varphi: \Gr \mathcal{M}' \rightarrow \link_{\Gr \mathcal{M}}(\emptyset,e), \quad \varphi(A,l) = (A,\phi_A^{\vee}(l)) \]
		is the desidered isomorphism. First, note that if $A \cup \{b\} \in \Delta \mathcal{M}$, the diagram
		\[ \begin{tikzcd}
		\displaystyle \frac{\mathcal{M}(\emptyset)}{\psi[A]} \arrow[r] \arrow[d] & \displaystyle\frac{\mathcal{M}(\emptyset)}{\psi[A]+\psi[b]} \arrow[d] \\
		\displaystyle\frac{\mathcal{M}(\emptyset)}{\tor(\emptyset)+\psi[A]} \arrow[r] & \displaystyle\frac{\mathcal{M}(\emptyset)}{\tor(\emptyset)+\psi[A]+\psi[b]}
		\end{tikzcd} \]
		clearly commutes, and if we restrict to the torsion parts and dualize we obtain the following commutative diagram
		\[ \begin{tikzcd}
		\tor(A)^\vee & \tor(A \cup \{b\})^\vee \arrow[l, "\pi_{A,b}^\vee"'] \\
		\tor'(A)^\vee \arrow[u, "\phi_A^\vee"'] & \tor'(A \cup \{b\})^\vee \arrow[u, "\phi_{A \cup \{b\}}^\vee"'] \arrow[l, "\pi_{A,b}'^\vee"']
		\end{tikzcd} \]
		and then
		\[ \phi_A^\vee \circ \pi_{A,b}'^\vee = \pi_{A,b}^\vee \circ \phi_{A \cup \{b\}}^\vee. \]
		The map $\phi_A$ is surjective, therefore its dual $\phi_A^\vee$ is injective. Now $(A,l) \triangleleft (A \cup \{b\},h)$ in $\Gr \mathcal{M}'$ if and only if
		\[ l = \pi_{A,b}^\vee(h) \Longleftrightarrow \phi_A^\vee(l) = \phi_A^\vee(\pi_{A,b}^\vee(h)) = \pi_{A,b}'^\vee(\phi_{A \cup \{b\}}^\vee(h)) \]
		if and only if $(A,\phi_A(l)) \triangleleft (A\cup \{b\}, \phi_{A \cup \{b\}}(h))$. 
		Hence $\varphi$ is an order-embedding, and so well defined. The injectivity of $\varphi$ follows from the injectivity of $\phi_A^\vee$. For surjectivity, first we note that whenever $\varphi(A,l') \triangleleft (A \cup \{b\},h)$, we have $(A \cup \{b\},h) \in \varphi(\Gr \mathcal{M}')$. In fact, by definition
		\[ \pi_{A,b}^\vee(h) = \phi_A^\vee(l') \Rightarrow h \in {\pi_{A,b}^\vee}^{-1}\big(\phi_A^\vee(l')\big) = \phi_{A \cup \{b\}}^\vee\big( {\pi_{A,b}'^\vee}^{-1}(l') \big). \]
		Now if $(A,l) \in \link_{\Gr \mathcal{M}}(\emptyset,e)$, then there is a chain of cover relations from $(\emptyset,e)$ to $(A,l)$, and since $(\emptyset,e) = \varphi(\emptyset,e')$, by iteratively apply the preceding remark, we obtain $(A,l) \in \varphi(\Gr \mathcal{M}')$. \qedhere

	\end{proof}
	
	The next result extend Theorem A of \cite{martino2017face} from matroids over $\mathbb{Z}$ to matroids over a domain.

	\begin{theorem}\label{thm:union-simplicial-posets}
		If $\mathcal{M}$ is a realizable matroid over $R$, then $\Gr \mathcal{M}$ is a disjoint union of simplicial posets isomorphic to $\link (\emptyset,e)$.
	\end{theorem}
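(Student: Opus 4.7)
The plan is to exhibit the decomposition explicitly as
\[
\Gr\mathcal{M} \;=\; \bigsqcup_{t\in \tor(\emptyset)^{\vee}} \link_{\Gr\mathcal{M}}(\emptyset,t),
\]
and then invoke the previous two propositions to identify each piece with a copy of $\link(\emptyset,e)$ that is simplicial.

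First I would verify the set-theoretic equality and disjointness. Given any $(A,h)\in\Gr\mathcal{M}$, applying Proposition \ref{unique cover} with $B=\emptyset$ produces a unique $t\in\tor(\emptyset)^{\vee}$ such that $(\emptyset,t)\le(A,h)$, i.e.\ $(A,h)\in \link_{\Gr\mathcal{M}}(\emptyset,t)$. This shows both that every element of $\Gr\mathcal{M}$ lies in some link $\link(\emptyset,t)$ and that no element lies in two distinct such links, so the decomposition is indeed a disjoint union. Note also that since there is no element of $\Gr\mathcal{M}$ strictly below any $(\emptyset,t)$, the pieces are pairwise incomparable in the order of $\Gr\mathcal{M}$, so the union is disjoint as posets, not merely as sets.

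Next, I would address the simplicial structure of each link. Proposition \ref{link simplicial poset} shows that $\link_{\Gr\mathcal{M}}(\emptyset,e)$ is a simplicial poset. By Proposition \ref{link are isomorphic}, for each $t\in\tor(\emptyset)^{\vee}$ there is a poset isomorphism $\link(\emptyset,e)\simeq\link(\emptyset,t)$, so every piece of the decomposition is a simplicial poset isomorphic to $\link(\emptyset,e)$. Combining this with the disjoint-union decomposition established above yields the theorem.

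The only subtlety I anticipate is a clean statement of why the decomposition is disjoint in the poset sense, but this is immediate because the minimal elements $(\emptyset,t)$ are pairwise incomparable (an element covering $(\emptyset,t)$ has the form $(\{b\},h)$ with $\pi_{\emptyset,b}^{\vee}(h)=t$, and uniqueness in Proposition \ref{unique cover} prevents it from lying over any other $(\emptyset,t')$). Thus the proof reduces to assembling Propositions \ref{unique cover}, \ref{link are isomorphic}, and \ref{link simplicial poset}, with no further calculation required.
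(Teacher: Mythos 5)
Your proposal is correct and follows essentially the same route as the paper: decompose $\Gr\mathcal{M}$ as the disjoint union of the links $\link(\emptyset,t)$ over $t\in\tor(\emptyset)^{\vee}$ via Proposition \ref{unique cover}, then apply Propositions \ref{link are isomorphic} and \ref{link simplicial poset}. Your extra remarks on why the pieces are pairwise disjoint as posets (the minimal elements $(\emptyset,t)$ are incomparable and uniqueness in Proposition \ref{unique cover} precludes overlap) merely make explicit what the paper leaves implicit.
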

	\begin{proof}
		For each $t \in \tor(\emptyset)^{\vee}$, the pair $(\emptyset,t)$ is minimal in $\Gr \mathcal{M}$, therefore, from Proposition \ref{unique cover} we have
		\[ \Gr \mathcal{M} =\bigsqcup_{t \in \tor(\emptyset)^{\vee}} \link (\emptyset,t). \]
		Finaly, from Proposition \ref{link are isomorphic}, for every $t \in \tor(\emptyset)$, $\link (\emptyset,t)$ is isomorphic to $\link (\emptyset,e)$ as posets, therefore, from Proposition \ref{link simplicial poset}, $\link(\emptyset,t)$ is a simplicial poset.
	\end{proof}
	
	\subsection{$f$-vector for the poset of torsions}\label{sec-f-vector-ref-intro}
	The poset of torsion $\Gr \mathcal{M}$ deserves an $f$-vector as any other finite simplicial posets. The $i$-th component of this vector should count the number of elements of $\Gr \mathcal{M}$ of rank $i-1$.
	Thus, the $f$-vector of the poset of torsions $\Gr \mathcal{M}$ of the matroid $\mathcal{M}$ over a domain $R$ coincides with the Grothendieck $f$-vector of the matroid $\mathcal{M}$.
	
	Whenever the poset of torsion is finite, then we get back the classical notion of the $f$-vector, by evaluating the isomorphic classes $[\tor(A)^{\vee}]$ by their cardinality, see for instance \cite{MR3336842, MR2538614}. We are going to work few examples of these cases in Section \ref{Sec:ring-of-integers}.
	
	\begin{example}\label{Example:Gr-M-with-f-vector}
		Let $R=\mathbb{Z}[i]$ and consider the matrix
		\[ (v_1,v_2) = \begin{bmatrix}
		1 & 1+i \\
		1+i & 0
		\end{bmatrix} \in R^{2,2}, \]
		where $v_1$ and $v_2$ are its columns. Let $\psi:[2] \rightarrow R^2$ with $\psi(i) = v_i$ and, for every $A \subseteq [2]$, set $\psi[A] = (\psi(i):i \in A) \subseteq R^2$. Now define $\mathcal{M}:2^{[2]} \rightarrow R$-mod with
		\[ \mathcal{M}(A) = \frac{R^2}{\psi[A]} \quad A \subseteq [2]. \]
		Thus $\mathcal{M}$ is a realizable $R$-matroid and $\psi$ is one of its realizations. More explicitely
		\[
		\begin{tikzcd}
		\mathcal{M}(\emptyset) \arrow[r] \arrow[d] & \mathcal{M}(1) \arrow[d] \\
		\mathcal{M}(2) \arrow[r] & \mathcal{M}(12)
		\end{tikzcd}
		\simeq
		\begin{tikzcd}
		\mathbb{Z}[i]^2 \arrow[r] \arrow[d] & \mathbb{Z}[i] \arrow[d] \\
		\mathbb{Z}[i] \oplus \mathbb{Z}[i]/(1+i)\mathbb{Z}[i] \arrow[r] & \mathbb{Z}[i]/2\mathbb{Z}[i]
		\end{tikzcd}
		\]
		The generic matroid of $\mathcal{M}$ is the uniform matroid $U_{2,2}$. We will see in Lemma \ref{lemma isomorphic torsion dual} that, since $R=\mathbb{Z}[i]$, in this case we have $\tor{(A)} \simeq \tor{(A)}^{\vee}$. Therefore, the poset of torsions $\Gr \mathcal{M}$ can be represented by the following diagram
		~\\
		\begin{center}
			\begin{tikzpicture}[scale=1, align=center]
			\node (00) at (0,0) {$(\emptyset,e)$};
			\node (10) at (-3,2) {$(\{1\},\overline{0})$};
			\node (20) at (0,2) {$(\{2\},\overline{0})$};
			\node (21) at (3,2) {$(\{2\},\overline{1})$};
			\node (120) at (-4.5,4) {$(\{1,2\},(0,0))$};
			\node (121) at (-1.5,4) {$(\{1,2\},(1,0))$};
			\node (122) at (1.5,4) {$(\{1,2\},(0,1)$};
			\node (123) at (4.5,4) {$(\{1,2\},(1,1))$};

			\draw (00) -- (10);
			\draw (00) -- (20);
			\draw (00) -- (21);
			
			\draw (10) -- (120);
			\draw (10) -- (121);
			\draw (10) -- (122);
			\draw (10) -- (123);
			
			\draw (20) -- (120);
			\draw (20) -- (121);
			\draw (21) -- (122);
			\draw (21) -- (123);
			
			\end{tikzpicture}
		\end{center}
		(it is the same poset of Example \ref{Example:simplicial poset}). The $f$-vector of $\Gr \mathcal{M}$ is
		\[ (f_{-1},f_0,f_1) = ([0], [0]+[\mathbb{Z}[i]/(1+i)\mathbb{Z}[i]], [\mathbb{Z}[i]/2\mathbb{Z}[i]]). \]	 
	\end{example}

	\subsection{Face module for a matroid over a domain}
	Let $\mathcal{M}$ be a realized matroid over a domain $R$, such that $\mathcal{M}(\emptyset)$ is torsion-free.
	From Theorem \ref{poset of torsions simplicial poset}, $\Gr \mathcal{M}$ is a simplicial poset and we define $A_{\mathcal{M}}$ as its face ring, see Definition \ref{def:Face-ring}.
	Whenever $\mathcal{M}(\emptyset)$ has torsions, 
	then we should define a \emph{face module}, $N_{\mathcal{M}}$.
	
	\begin{definition}\label{def:face-module}
		Let $\mathcal{M}$ be a matroid over a domain $R$, let $L$ be the link of $(\emptyset, e)$ in $\Gr \mathcal{M}$ and denote by $A_L$ the face ring of $L$.
		The face module $N_{\mathcal{M}}$ of $\mathcal{M}$ is the $A_L$-module
		\[
		N_{\mathcal{M}}=A_L^{|tor(\emptyset)|}
		\]
		(when $\tor{(\emptyset)}$ is not finite, we interpret $|\tor{(\emptyset)}|$ as a cardinal number).
	\end{definition}
	
	An alternative way to define the face module of $\mathcal{M}$ is to consider the matroid $\mathcal{M}'$ defined as in the proof of Proposition \ref{link simplicial poset}. Let $\psi:[n] \rightarrow \mathcal{M}(\emptyset)$ be a realization of $\mathcal{M}$; for every subset $A$ of $[n]$ set $\psi[A] = (\psi(i):i \in A)$ and
	\[ 
	\mathcal{M}'(A) = \frac{\mathcal{M}(\emptyset)}{\tor(\emptyset) + \psi[A]}. 
	\]
	Now $N_{\mathcal{M}} = A_\mathcal{M'}^{|\tor(\emptyset)|}$, since we have $\Gr \mathcal{M}' \simeq \link_{\Gr \mathcal{M}}(\emptyset,e)$ as posets, see proof of Proposition \ref{link simplicial poset}.

	When $R$ is a field, then $N_\mathcal{M}$ is the classical Stanley-Reiner ring of a matroid; when $R$ is the integer ring, then $N_\mathcal{M}$ is the face module for a $\mathbb{Z}$-matroid defined in \cite{martino2017face}.
	In both cases, the poset of torsion is finite and $N_\mathcal{M}$ is Noetherian. For a different ring this may not be the case.
	
	\begin{example}\label{example:simplicial poset infinite}
		If we consider the $\mathbb{Z}[x]$-matroid $\mathcal{M}$ of Example \ref{Example:Gr-M-with-f-vector}, then $\mathcal{M}(\emptyset)$ is torsion-free, so the face module of $\mathcal{M}$ is the face \emph{ring} of $P=\Gr \mathcal{M}$ which coincide with the ring $A_P$ of Example \ref{Example:simplicial poset}.
		
		Conversely, we now consider a matroid with infinite torsions. Set $R = \mathbb{Z}[x]$ and consider the matrix
		\[ (v_1,v_2) = \begin{bmatrix}
		x & 0 \\
		0 & 1
		\end{bmatrix} \in R^{2,2}, \]
		where $v_1$ and $v_2$ are its columns. Similarly as what we have done in Example \ref{Example:Gr-M-with-f-vector}, we define $\psi:[2] \rightarrow R^2$ with $\psi(i) = v_i$ and and from $\psi$ we obtain a realizable $R$-matroid $\mathcal{M}:2^{[2]} \rightarrow R$-mod
		\[ \mathcal{M}(A) = \frac{R^2}{\psi[A]} \quad A \subseteq [2]. \]
		\[
		\begin{tikzcd}
		\mathcal{M}(\emptyset) \arrow[r] \arrow[d] & \mathcal{M}(1) \arrow[d] \\
		\mathcal{M}(2) \arrow[r] & \mathcal{M}(12)
		\end{tikzcd}
		\simeq
		\begin{tikzcd}
		\mathbb{Z}[x]^2 \arrow[r] \arrow[d] & \mathbb{Z}[x] \oplus {\red \mathbb{Z}} \arrow[d] \\
		\mathbb{Z}[x] \arrow[r] & {\blue \mathbb{Z}}
		\end{tikzcd}
		\]
		(where $\mathbb{Z} \simeq \mathbb{Z}[x] / (x)$). The generic matroid of $\mathcal{M}$ is the uniform matroid $U_{2,2}$. In this case $\tor{(\mathcal{M}(1))} = \tor{(\mathcal{M}(12))} = \mathbb{Z}$ is not finite, and so is the poset of torsions $\Gr \mathcal{M}$:
		~\\
		\begin{center}
			\tikzstyle{circle1}=[shape=circle,fill=black]
			\tikzstyle{circle2}=[shape=circle,fill=red]
			\tikzstyle{circle3}=[shape=circle,fill=blue]
			\begin{tikzpicture}[scale=1, align=center]
			\node (00) at (0,0) [circle1] {};
			
			\node (dots) at (-5,2) {\dots};
			\node (dots1) at (-4,2) {\dots};
			\node (dots) at (-3,2) {\dots};
			\node (10) at (-2,2) [circle2] {};
			\node (dots) at (-1,2) {\dots};
			\node (11) at (0,2) [circle2] {};
			\node (12) at (1,2) [circle2] {};
			
			\node (13) at (3,2) [circle1] {};
			
			\node (dots) at (-3.5,4) {\dots};
			\node (dots2) at (-2.5,4) {\dots};
			\node (dots) at (-1.5,4) {\dots};
			\node (20) at (-0.5,4) [circle3] {};
			\node (dots) at (0.5,4) {\dots};
			\node (21) at (1.5,4) [circle3] {};
			\node (22) at (2.5,4) [circle3] {};

			\draw (00) -- (10);
			\draw (00) -- (11);
			\draw (00) -- (12);
			\draw (00) -- (13);
			\draw (00) -- (dots1);
			
			\draw (10) -- (20);
			\draw (11) -- (21);
			\draw (12) -- (22);
			
			\draw (13) -- (20);
			\draw (13) -- (21);
			\draw (13) -- (22);
			
			\draw (dots1) -- (dots2);
			\draw (13) -- (dots2);
			
			\end{tikzpicture}
		\end{center}
		in the preceding diagram the infinite red and blue dots corresponds respectively to the torsion part of $\mathcal{M}(1)$ and $\mathcal{M}(12)$. Note that in this case the face ring of $\Gr \mathcal{M}$ has infinite variables, therefore it is not Noetherian.
	\end{example}

	\section{Tutte polynomial for ring of integers of a number field}\label{Sec:ring-of-integers}
	
	In this section, $\mathbb{F}$ is an \emph{algebraic number field}, that is a finite field extension of the rational numbers $\mathbb{Q}$. The ring $R$ is the \emph{ring of integers} of $\mathbb{F}$, i.e. the integral closure of $\mathbb{Z}$ in $\mathbb{F}$, so $Q(R) = \mathbb{F}$. Under this hypothesis, $R$ is a Dedekind domain \cite[Theorem 9.5]{atiyah2018introduction}. Further, from \cite[Theorem 9.1.3]{alaca2004introductory}, for every nonzero ideal $I$ of $R$, the cardinality of $R/I$ is finite. We further assume that $R$ is a PID.
	
	Let $\mathcal{M}$ be a (not necessarily realizable) $R$-matroid on the ground set $[n]$ of rank $r$. From \cite[Proposition 3.3]{fink2016matroids}, for every subset $A$ of $[n]$, we can write uniquely
	\[ 
	\tor(A) = \frac{R}{I_1} \oplus \frac{R}{I_2} \oplus \dots \oplus \frac{R}{I_m} 
	\]
	for a chain $I_1 \subseteq \dots \subseteq I_m$ of nonzero ideals of $R$. 
	
	\begin{lemma}\label{lemma isomorphic torsion dual}
		For every ideal $I$ of $R$ we have
		\[ \Hom_R(\nicefrac{R}{I},\nicefrac{Q(R)}{R}) \simeq \nicefrac{R}{I}. \]
	\end{lemma}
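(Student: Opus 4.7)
The plan is to compute $\Hom_R(R/I, Q(R)/R)$ directly by describing all such homomorphisms and then identifying the resulting module with $R/I$, exploiting the assumption that $R$ is a PID.

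First I would observe that any $R$-module homomorphism $\varphi \colon R/I \to Q(R)/R$ is completely determined by the single element $\varphi(1+I) = x + R \in Q(R)/R$. The map is well-defined precisely when $\varphi(I) = 0$, i.e.\ when $a \cdot x \in R$ for every $a \in I$, which is the condition $I \cdot x \subseteq R$. Equivalently, $x$ lies in the fractional ideal $I^{-1} = \{y \in Q(R) : y I \subseteq R\}$. This gives a natural identification
\[
\Hom_R(R/I, Q(R)/R) \;\simeq\; I^{-1}/R.
\]

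Next I would use the hypothesis that $R$ is a PID. Writing $I = (a)$ for some nonzero $a \in R$, we have $I^{-1} = \tfrac{1}{a}R \subseteq Q(R)$. The $R$-module map
\[
R/(a) \longrightarrow \tfrac{1}{a}R / R, \qquad r + (a) \longmapsto \tfrac{r}{a} + R,
\]
is easily checked to be well-defined (if $r \in (a)$ then $r/a \in R$), $R$-linear, surjective (any class $y + R$ with $y \in \tfrac{1}{a}R$ is of this form), and injective (if $r/a \in R$ then $r \in aR = I$). Composing this isomorphism with the identification of the first step yields $\Hom_R(R/I, Q(R)/R) \simeq R/I$.

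There is no serious obstacle here; the entire argument is essentially just unpacking the universal property of the quotient $R/I$ and using that the fractional ideal $I^{-1}$ is principal when $I$ is. The only point that requires a moment of care is verifying that the map $r + (a) \mapsto r/a + R$ is well-defined and has trivial kernel, but both follow immediately from $I = (a)$. If one wished to avoid the PID hypothesis, the isomorphism $\Hom_R(R/I, Q(R)/R) \simeq I^{-1}/R$ still holds over any Dedekind domain, and one could then invoke the standard isomorphism $I^{-1}/R \simeq R/I$ for invertible ideals (which can be proved locally, since it holds over any DVR); but since the paper has already restricted to the PID case, the direct computation above is the cleanest route.
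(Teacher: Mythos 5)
Your proof is correct and follows essentially the same route as the paper: both write $I=(d)$ using the PID hypothesis, observe that a homomorphism $f$ is determined by $f(\overline{1})$ which must lie in $\tfrac{1}{d}R/R$, and identify this with $R/I$. Your version simply makes the intermediate identification with $I^{-1}/R$ explicit, which the paper leaves implicit.
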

	\begin{proof}
		Let $I = (d)$, for every $f \in \Hom_R(R/I,\nicefrac{Q(R)}{R})$ we have
		\[ d \cdot f(\overline{1}) = \overline{0} \Rightarrow f(\overline{1}) = \frac{r}{d} + R. \]
		Now it is easy to check that the maps $\varphi:\Hom_R(R/I,\nicefrac{Q(R)}{R}) \rightarrow R/I$ defined by $\varphi(f) = r+I$ is an isomorphism.
	\end{proof}
	
	\begin{corollary}\label{torsion dual same carindality}
		For every finitely generated torsion $R$-module $N$ we have $N \simeq N^{\vee}$, in particular $|N^{\vee}| = |N|$ is finite.
	\end{corollary}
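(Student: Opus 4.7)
The plan is to combine the structure theorem for finitely generated modules over a PID with the preceding Lemma~\ref{lemma isomorphic torsion dual} and the hypothesis that every nonzero quotient $R/I$ is finite. Since $R$ is assumed to be a PID (in particular a Dedekind domain) and $N$ is a finitely generated torsion $R$-module, I would first invoke the structure theorem to write
\[
N \simeq \frac{R}{I_1} \oplus \frac{R}{I_2} \oplus \cdots \oplus \frac{R}{I_m}
\]
for nonzero ideals $I_1,\dots,I_m$ of $R$, exactly in the form recalled immediately before the lemma (citing \cite[Proposition 3.3]{fink2016matroids} suffices).

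Next, I would apply the contravariant functor $\Hom_R(-, Q(R)/R)$, which converts finite direct sums into finite direct sums (since $\Hom$ out of a coproduct is a product, and finite products and coproducts of modules agree). This gives
\[
N^{\vee} = \Hom_R(N, \nicefrac{Q(R)}{R}) \simeq \bigoplus_{k=1}^{m} \Hom_R\!\left(\frac{R}{I_k}, \nicefrac{Q(R)}{R}\right).
\]
Applying Lemma~\ref{lemma isomorphic torsion dual} to each summand yields $\Hom_R(R/I_k, Q(R)/R) \simeq R/I_k$, so that
\[
N^{\vee} \simeq \bigoplus_{k=1}^{m} \frac{R}{I_k} \simeq N,
\]
which is the first assertion.

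For the second assertion, I would use the hypothesis recalled at the start of the section: since $R$ is the ring of integers of a number field, by \cite[Theorem 9.1.3]{alaca2004introductory} the quotient $R/I$ is finite for every nonzero ideal $I$ of $R$. Hence each summand $R/I_k$ has finite cardinality, and the finite direct sum $N \simeq \bigoplus_k R/I_k$ is finite as well. Combined with the isomorphism $N \simeq N^{\vee}$ this gives $|N^{\vee}| = |N| < \infty$. There is no real obstacle in this proof; the only delicate point is to check that the finite direct sum decomposition commutes with $\Hom$, which is purely formal, so the argument reduces to a direct assembly of Lemma~\ref{lemma isomorphic torsion dual}, the structure theorem, and the finiteness of the residue rings of $R$.
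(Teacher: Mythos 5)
Your proof is correct and matches the argument the paper implicitly intends: the corollary is stated without an explicit proof because it follows immediately from the structure theorem decomposition $N \simeq \bigoplus_k R/I_k$ recalled just before Lemma~\ref{lemma isomorphic torsion dual}, that lemma applied summand-by-summand (using that $\Hom$ out of a finite direct sum is the direct sum of $\Hom$'s), and the finiteness of $R/I$ for nonzero $I$ in a ring of integers. Nothing is missing and no alternative route is taken.
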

	
	Now we want to define a homomorphism of rings $\varphi:L_0(R\text{-mod}) \rightarrow \mathbb{Z}$. In order to do that, since every module $M$ is isomorphic to the direct sum of its torsion part $\tor(M)$ and the free module $\nicefrac{M}{\tor(M)}$, we have to provide the values of $\varphi$ just on the isomorphic classes of projective and torsion modules. Specifically, 
	\begin{align*}
	&\varphi([F]) = 1 & \text{for every free module } F, \\
	&\varphi([N]) = |N|  & \text{for every torsion module } N.
	\end{align*}
	\noindent
	The homomorphism $\varphi$ induces the homomorphism of polynomial rings
	\[ 
	\tilde{\varphi}:L_0(R\text{-mod})[x,y] \rightarrow \mathbb{Z}[x,y]. 
	\]
	and, thus, we can consider the image under $\tilde{\varphi}$ of the Tutte polynomial
	\begin{equation}\label{eq:tilte-tutte}
	\tilde{T}_{\mathcal{M}}(x,y) = \tilde{\varphi} \big( T_{\mathcal{M}}(x,y) \big) = \sum_{A \subseteq [n]} |\tor(A)^\vee| (x-1)^{r-\rk(A)}(y-1)^{|A|-\rk(A)}
	\end{equation}
	
	\noindent
	From Corollary \ref{torsion dual same carindality} we have $|\tor(A)| = |\tor(A)^{\vee}|$ and, thus, in the case when $R=\mathbb{Z}$, then $\mathcal{M}$ has the structure of a quasi-arithmetic matroid (see \cite[Corollary 6.3]{fink2016matroids}) and $\tilde{T}_{\mathcal{M}}(x,y)$ coincides with the arithmetic Tutte polynomial defined in \ref{eq-Tutte-classical}, see \cite{moci2012tutte}.
	
	Let $\psi:[n] \rightarrow \mathcal{M}(\emptyset)$ be a realization of $\mathcal{M}$. For every $A$ subset of $[n]$ we denote by $\psi[A] = (\psi(i):i \in A)$. As in the proof of Proposition \ref{link simplicial poset}, define the $R$-matroid
	\[ \mathcal{M}'(A) = \frac{\mathcal{M}(\emptyset)}{\tor(\emptyset) + \psi[A]}. \]
	
	\begin{lemma}\label{quotient out torsion}
		Let $\mathcal{M}$ be a (even not realizable) $R$-matroid and $\mathcal{M}'$ as above, then
		\[ \tilde{T}_{\mathcal{M}}(t,1) = |\tor(\emptyset)| \tilde{T}_{\mathcal{M}'}(t,1). \]
	\end{lemma}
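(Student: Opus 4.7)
The plan is to evaluate both sides at $y=1$, exploit the vanishing of the factor $(y-1)^{|A|-\rk(A)}$ on non-independent sets to reduce to a sum over $\Delta\mathcal{M}$, and then verify term by term the multiplicative identity
\[
|\tor_{\mathcal{M}}(A)| \;=\; |\tor(\emptyset)|\cdot |\tor_{\mathcal{M}'}(A)| \qquad \text{for every } A \in \Delta\mathcal{M}.
\]
Once this identity is in place, factoring $|\tor(\emptyset)|$ out of every term immediately gives the lemma.

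First I would collapse formula \eqref{eq:tilte-tutte} at $y=1$ to
\[
\tilde{T}_{\mathcal{M}}(t,1) \;=\; \sum_{A\in \Delta\mathcal{M}} |\tor_{\mathcal{M}}(A)|(t-1)^{r-|A|},
\]
using Corollary \ref{torsion dual same carindality} to replace $|\tor(A)^\vee|$ by $|\tor(A)|$, and write the analogous formula for $\mathcal{M}'$. To compare these sums I would record that $\Delta\mathcal{M}=\Delta\mathcal{M}'$ and that the two matroids share the same rank $r$, both facts following from the isomorphism $\mathcal{M}'\otimes Q(R)\simeq \mathcal{M}\otimes Q(R)$ established inside the proof of Proposition \ref{link simplicial poset}.

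The heart of the argument is the short exact sequence
\[
0 \longrightarrow \overline{\tor(\emptyset)} \longrightarrow \mathcal{M}(A) \longrightarrow \mathcal{M}'(A) \longrightarrow 0,
\]
where $\overline{\tor(\emptyset)}$ denotes the image of $\tor(\emptyset)$ under the natural projection $\mathcal{M}(\emptyset) \twoheadrightarrow \mathcal{M}(A)=\mathcal{M}(\emptyset)/\psi[A]$. I would split the verification of the multiplicative identity into two sub-steps. Sub-step (i): show $\overline{\tor(\emptyset)}\simeq \tor(\emptyset)$. Since $A\in\Delta\mathcal{M}$, the submodule $\psi[A]\subseteq \mathcal{M}(\emptyset)$ is free of rank $|A|$ (its tensor with $Q(R)$ has maximal dimension, so the kernel of $R^{A}\to\mathcal{M}(\emptyset)$, being simultaneously torsion and torsion-free by Lemma \ref{torsion free tensor product}, is zero); in particular $\psi[A]$ is torsion-free, hence $\psi[A]\cap \tor(\emptyset)=0$ and the restriction of the projection to $\tor(\emptyset)$ is injective. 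Sub-step (ii): show that $\tor_{\mathcal{M}'}(A)\simeq \tor_{\mathcal{M}}(A)/\overline{\tor(\emptyset)}$. Here I would invoke the PID hypothesis to split $\mathcal{M}(A)\simeq F\oplus \tor_{\mathcal{M}}(A)$ with $F$ free, and note that $\overline{\tor(\emptyset)}\subseteq \tor_{\mathcal{M}}(A)$ because torsion maps to torsion under any $R$-linear map; the quotient therefore decomposes as $F\oplus \tor_{\mathcal{M}}(A)/\overline{\tor(\emptyset)}$, whose torsion part is precisely the second summand.

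Combining (i) and (ii) yields the multiplicative identity (all these cardinalities are finite since nonzero ideal quotients of $R$ are finite, as recalled at the start of the section), and substitution into the sum for $\tilde{T}_\mathcal{M}(t,1)$ pulls $|\tor(\emptyset)|$ out as a common factor. The step I expect to be the most delicate is sub-step (i), since it is the only place where the independence of $A$ and the absence of torsion in $\psi[A]$ are genuinely used; everything else is bookkeeping with the structure theorem over a PID.
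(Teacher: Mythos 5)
Your proposal is correct and follows essentially the same route as the paper: evaluate at $y=1$ to restrict to $A\in\Delta\mathcal{M}$, then show $|\tor_{\mathcal{M}'}(A)| = |\tor_{\mathcal{M}}(A)|/|\tor(\emptyset)|$ via the identification $\mathcal{M}'(A)\simeq \mathcal{M}(A)/\overline{\tor(\emptyset)}$ with $\overline{\tor(\emptyset)}\simeq\tor(\emptyset)$. The only difference is cosmetic: you justify the injectivity of $\tor(\emptyset)\to\mathcal{M}(A)$ by proving $\psi[A]$ is free (so torsion-free), whereas the paper invokes the iterated application of Lemma~\ref{injection of torsion} already set up in Section~\ref{sec:poset of torsions}, and you spell out via the PID splitting the step the paper states quickly.
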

	\begin{proof}
		Since we are evaluating the Tutte polynomial at $y=1$, we are considering the term of the sum where $A$ is an independent set of $\Delta \mathcal{M}$. As we have seen in Section \ref{sec:poset of torsions}, the restriction of the quotient map $\tor(\emptyset) \rightarrow \tor(A)$ is injective, so by identifying $\tor(\emptyset)$ with its homomorphic image
		\[ 
		\tor(\emptyset) \simeq \frac{\tor(\emptyset)}{\tor(\emptyset) \cap \psi[A]} \simeq \frac{\tor(\emptyset) + \psi[A]}{\psi[A]}.
		\]
		We also know that
		\[ 
		\mathcal{M}'(A) \simeq \frac{\mathcal{M}(A)}{\nicefrac{(\tor(\emptyset) + \psi[A])}{\psi[A]}} \simeq \frac{\mathcal{M}(A)}{\tor(\emptyset)}. 
		\]
		This means that the torsion part of $\mathcal{M}'(A)$ is isomorphic to $\tor(A)/\tor(\emptyset)$, in particular, its cardinality is $|\tor(A)| / |\tor(\emptyset)|$ and now the statement follows easily.
	\end{proof}
	
	It is a well known results for classical matroids (see Appendix (Section A.3) of \cite{MR2492449}) and for realizable $\mathbb{Z}$-matroids \cite{martino2017face}, that the Hilbert series $N_\mathcal{M}(t)$ is a specialization of the (arithmetic) Tutte Polynomial $T_{\mathcal{M}}(\nicefrac{1}{t}, 1)$.
	We are ready to generalize such theorem also in this setting.
	
	\begin{theorem}\label{Hilbert specialization Tutte}
		If $\mathcal{M}$ is a realizable $R$-matroid of rank $r$, then
		\[ N_\mathcal{M}(t) = \frac{t^r}{(1-t)^r}\tilde{T}_{\mathcal{M}}(\nicefrac{1}{t}, 1). \]
	\end{theorem}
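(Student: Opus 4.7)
The plan is to derive the identity by assembling three ingredients from the excerpt: Theorem \ref{thm:tutte-f-vector}, which encodes the Grothendieck $f$-vector as a specialization of the Tutte polynomial; the decomposition of $\Gr\mathcal{M}$ from Theorem \ref{thm:union-simplicial-posets}; and Stanley's Hilbert series formula, Theorem \ref{hilbert series and h vector}, applied to $L=\link_{\Gr\mathcal{M}}(\emptyset,e)$.

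First I would apply the evaluation homomorphism $\tilde{\varphi}$ of \eqref{eq:tilte-tutte} to both sides of the identity in Theorem \ref{thm:tutte-f-vector}. Setting $\tilde{f}_{i-1}=\tilde{\varphi}(f_{i-1})=\sum_{A\in\Delta\mathcal{M},\,|A|=i}|\tor(A)^{\vee}|$, this yields
\[
\frac{t^r}{(1-t)^r}\tilde{T}_{\mathcal{M}}(\nicefrac{1}{t},1)=\sum_{i=0}^r \tilde{f}_{i-1}\,\frac{t^i}{(1-t)^i}.
\]
Hence the theorem reduces to showing that $N_\mathcal{M}(t)$ equals this sum.

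Next I would unpack Stanley's formula for the face ring $A_L$ of the simplicial poset $L$ (Theorem \ref{hilbert series and h vector}): a substitution $t\mapsto 1/t$ followed by multiplication by $t^r$ in the $h$-vector identity rewrites it as
\[
A_L(t)=\sum_{i=0}^r f^L_{i-1}\,\frac{t^i}{(1-t)^i},
\]
where $f^L_{i-1}$ counts rank-$i$ elements of $L$. By Theorem \ref{thm:union-simplicial-posets}, $\Gr\mathcal{M}$ is a disjoint union of $|\tor(\emptyset)^{\vee}|$ copies of $L$, indexed by $\tor(\emptyset)^{\vee}$; and because $[\hat{0},(A,l)]$ is boolean of rank $|A|$, the rank-$i$ elements of $\Gr\mathcal{M}$ are precisely the pairs $(A,l)$ with $|A|=i$. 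Counting these two ways gives $\tilde{f}_{i-1}=|\tor(\emptyset)^{\vee}|\cdot f^L_{i-1}$.

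Finally, since $N_\mathcal{M}=A_L^{|\tor(\emptyset)|}$ by Definition \ref{def:face-module} and $|\tor(\emptyset)|=|\tor(\emptyset)^{\vee}|$ by Corollary \ref{torsion dual same carindality}, I would conclude
\[
N_\mathcal{M}(t)=|\tor(\emptyset)|\cdot A_L(t)=\sum_{i=0}^r |\tor(\emptyset)^{\vee}|\cdot f^L_{i-1}\,\frac{t^i}{(1-t)^i}=\sum_{i=0}^r\tilde{f}_{i-1}\,\frac{t^i}{(1-t)^i},
\]
which matches the earlier specialization of $\tilde{T}_\mathcal{M}$. The main obstacle I anticipate is the bookkeeping around the grading conventions: one must verify that Stanley's convention $\deg(x_a)=\rk(a)$ makes a rank-$i$ element of $L$ contribute exactly $t^i/(1-t)^i$ to the Hilbert series, and that the rank of $(A,l)$ viewed inside $L$ really coincides with $|A|$, so that the double count lines up cleanly. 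Everything else is direct assembly from the results already established.
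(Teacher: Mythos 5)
Your proof is correct, but it takes a somewhat different route from the paper's. The paper reduces to the case $\tor(\emptyset)=0$ via Lemma \ref{quotient out torsion} (which shows $\tilde{T}_{\mathcal{M}}(t,1)=|\tor(\emptyset)|\,\tilde{T}_{\mathcal{M}'}(t,1)$ for the torsion-free quotient $\mathcal{M}'$, combined with the observation $N_{\mathcal{M}}(t)=|\tor(\emptyset)|\,A_{\mathcal{M}'}(t)$), and then re-derives the $f$-vector/$h$-vector identity inline before invoking Stanley's Theorem \ref{hilbert series and h vector}. You instead account for the $|\tor(\emptyset)|$ factor combinatorially, counting rank-$i$ elements of $\Gr\mathcal{M}$ across the disjoint-union decomposition of Theorem \ref{thm:union-simplicial-posets}, and you reuse Theorem \ref{thm:tutte-f-vector} directly rather than rederiving it. Both arguments hinge on the same two facts — $\tilde{T}_{\mathcal{M}}(t,1)=\sum_i\tilde{f}_{i-1}(t-1)^{r-i}$ and Stanley's formula — and differ only in how the $|\tor(\emptyset)|$ multiplicity is incorporated: the paper's reduction via Lemma \ref{quotient out torsion} is algebraic and self-contained (and the lemma is of independent interest, as the remark after the theorem notes), while your count over the $|\tor(\emptyset)^\vee|$ copies of $\link(\emptyset,e)$ is more combinatorial and reuses the preceding structural theorems. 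One small point you correctly flag and which is worth making explicit in a final write-up: since $(\emptyset,e)$ is the minimum of its component, $\link(\emptyset,e)$ is that entire connected component, so the rank of $(A,l)$ in $L$ is indeed $|A|$, which makes the count $\tilde{f}_{i-1}=|\tor(\emptyset)^\vee|\,f^L_{i-1}$ line up with the grading $\deg(x_a)=\rk(a)$.
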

	\begin{proof}
		From Lemma \ref{quotient out torsion} and by the additivity property of the Hilbert series, it is enough to show that the theorem is true when $\mathcal{M}(\emptyset)$ is torsion-free, i.e. when $\tor(\emptyset) = 0$. In this case $\Gr \mathcal{M} = \link_{\Gr \mathcal{M}}(\emptyset,e)$ is a simplicial poset and $N_\mathcal{M}$ is the face ring $A_\mathcal{M}$ of $\Gr \mathcal{M}$.
		The image under $\varphi$ of the components of the $f$-vector of the matroid $\mathcal{M}$ (Definition \ref{def:f-vector}) coincides with the components $f_i$ of the $f$-vector of $\Gr \mathcal{M}$:
		\[ f_{i-1} = \sum_{\substack{ A \in \Delta \mathcal{M} \\ |A|=i }} |\tor(A)^\vee|. \]
		Thus
		\[ \begin{split}
		\tilde{T}_{\mathcal{M}}(t,1) & = \sum_{A \in \Delta \mathcal{M}}  |\tor(A)^\vee|(t-1)^{r-|A|} = \sum_{i=0}^r \left( \sum_{\substack{A \in \Delta \mathcal{M} \\ |A| = i}}  |\tor(A)^\vee| \right) (t-1)^{r-i} = \\
		& = \sum_{i=0}^r f_{i-1} (t-1)^{r-i} = \sum_{i=0}^r h_i t^{r-i}
		\end{split} \]
		(where $h_i$ are the components of the $h$-vector of $\Gr \mathcal{M}$). Finally, from Theorem \ref{hilbert series and h vector}
		\[ 
		A_\mathcal{M}(t) = \frac{1}{(1-t)^r} \sum_{i=0}^r h_it^i = \frac{t^r}{(1-t)^r}\tilde{T}_{\mathcal{M}}(1/t,1). \qedhere
		\]
	\end{proof}
	
	\begin{remark}
		The homomorphism $\varphi$, Lemma \ref{quotient out torsion} and Theorem \ref{Hilbert specialization Tutte} can be defined and proved in the more general class of rings in which every module $M$ can be decomposed in the direct sum of its torsion part $\tor{(M)}$ and $M/\tor{(M)}$, and every torsion module is finite.
	\end{remark}

	\begin{example}
		Set $R = \mathbb{Z}[i]$ (note that $\mathbb{Z}[i]$ is the ring of integers of $\mathbb{Q}[i]$ and it is a PID), and consider the $R$-matroid $\mathcal{M}$ of Example \ref{Example:Gr-M-with-f-vector}. Set $P=\Gr \mathcal{M}$, the face module of $\mathcal{M}$ is the face ring of $P$, and it was computed in Example \ref{Example:simplicial poset}:
		\[ N_\mathcal{M} = A_P \simeq \frac{\mathbb{K}[x_a,x_{b_0},x_{b_1},x_{c_0},x_{c_1},x_{d_0},x_{d_1}]}{
			\left(
			\begin{array}{l}			
			x_a x_{b_i} - (x_{c_i}+x_{d_i}), x_{b_0} x_{b_1}, \\
			x_{c_i} x_{d_j}, x_{c_0} x_{c_1}, x_{d_0} x_{d_1}, \\
			x_{b_i} x_{c_{\overline{i}}}, x_{b_i} x_{d_{\overline{i}}}
			\end{array}
			:
			\begin{array}{l}
			i,j \in \{0,1\} \\
			\overline{i} = 1-i
			\end{array}
			\right)
		} \]
		We compute the Hilbert series of $N_\mathcal{M}$ using Macaulay2 \cite{M2}:
		\[ \begin{split}
		N_\mathcal{M}(t) &= \frac{1-3t^2-2t^3+2t^4+8t^5+2t^6-12t^7-3t^8+8t^9+t^{10}-2t^{11}}{(1-t^2)^4(1-t)^3} = \\
		&= \frac{1+t+2t^2}{(1-t)^2}.
		\end{split} \]
		The image under $\varphi$ of the Tutte polynomial is
		\[ \begin{split}
		\tilde{T}_{\mathcal{M}}(x,y) &= \sum_{A \subseteq [2]} |\tor{(A)}| (x-1)^{2-\rk(A)} (y-1)^{|A|-\rk(A)} = \\
		&= (x-1)^2 + 3(x-1) + 4 = x^2 + x + 2.
		\end{split} \]
		(Note that $(1,3,4)$ and $(1,1,2)$ are the $f$-vector and the $h$-vector of $\Gr \mathcal{M}$.) Finally we have
		\[ N_{\mathcal{M}}(t) = \frac{1+t+2t^2}{(1-t)^2} = \frac{t^2}{(1-t)^2}\tilde{T}_{\mathcal{M}}(1/t,1). \]
	\end{example}
	
	\section{Elliptic Arrangements with complex multiplications}
	As an application of the previous study, in this section we treat the matroids over the ring of integers of $\mathbb{F}$, where $\mathbb{F}$ is a \emph{quadratic} field extension of $\mathbb{Q}$ for instance when the ring of Gaussian Integers $\mathbb{Z}[i]$ and the ring of Eisenstein integers $\mathbb{Z}[\rho]$.
	
	Matroids over $\mathbb{Z}$ are of great interest because of their connection to toric arrangements \cite{MR2183118}: $\mathbb{C}^*$ is the character group of $\mathbb{Z}$ and $\nicefrac{\mathbb{C}}{\mathbb{Z}}$ is analytically isomorphic to $\mathbb{C}^*$.  
	%
	Instead of $\mathbb{Z}$, we are going to consider a lattice in $\mathbb{C}$: a free, rank two, additive complex subgroup that generate the whole $\mathbb{C}$ as a real vector space.
	
	\noindent
	It is well know, that given a lattice $\Lambda$ the quotient $\nicefrac{\mathbb{C}}{\Lambda}$, equipped with the quotient topology and with an analytic structure is a Riemann surface with genus one. In particular, this is analytically isomorphic to an elliptic curve that we denote by $E(\Lambda)$; see for instance \cite{MR2931758, MR1193029}.

	\noindent
	Regular maps in between $E(\Lambda)$ that are also group homomorphims are called isogenies. One can show that $\mathbb{Z}$ is always a subring of the ring of isogenies $\operatorname{End} (E(\Lambda))$, but with respect to the lattice $\Lambda$, such ring could be larger and it could contains so called \emph{complex multiplications}.
	Specifically, if the lattice $\Lambda$ is generated by $1$ and $w$, then $E(\Lambda)$ has complex multiplications $[w]$ if and only if $w$ is quadratic over $\mathbb{Q}$.
	Moreover, in the case the curve has complex multiplication, the endomorphism ring $\operatorname{End} (E(\Lambda))$ is an order in an imaginary quadratic field.

	
	Elliptic arrangements appeared first in \cite{MR3203648}, and are also studied later by several other authors \cite{MR3487239, MR1404924, MR1274092, MR3477330, BibbyGadish-elliptic}. 
	They can be defined by a collection of regular maps $l_i$ from $E^d\rightarrow E$ and the object in study is $\mathcal{E}$, the set of \emph{hyperplanes} $\ker l_i$ in $E^d$.
	
	\noindent
	Given a lattice $\Lambda$ and assume that $\operatorname{End}(E(\Lambda))=\mathbb{Z}$, then an elliptic arrangement in $E(\Lambda)^d$ is given by a $n\times d$ integer matrix, where the $i-th$ column of the matrix defines an isogeny $l_i$. We are going to focus only on central arrangements.

	Let $T^e(x,y)$ be the elliptic Tutte polynomial associated to an central elliptic arrangement $\mathcal{E}$ defined in the end of Section 4 of \cite{MR3487239}:
	\begin{equation}\label{eq:tutte-elliptic}
	T^e_\mathcal{E}(x,y)=\sum_{S\subseteq [n]} m(S) (x-1)^{r'-\rk(S)}(y-1)^{|S|-\rk(S)}.
	\end{equation}
	Here the multiplicity $m(S)$ is the number of connected components of the intersections of elliptic hyperplanes $\cap_{i\in S} l_i$, $\rk(S)$ is the complex dimension of $\cap_{i\in S} l_i$, and $r'$ is the rank of $\mathcal{E}$.
	
	When the elliptic curve does not admit complex multiplications, then Bibby has shown that the Hilbert series of the model $A(\mathcal{E})$ for the cohomology of the open complement $U$ of the elliptic arrangement in $E^d$ is $t^nT^e_\mathcal{E}(1+\nicefrac{(1+t)^2}{t},0)$; from that she easily gets the Euler characteristic of $U$, see end of Section 4 of \cite{MR3487239}.
	
	We are able to extend her result to all elliptic  arrangements. 
	First we observe that for elliptic arrangements with complex multiplications, the elliptic Tutte polynomial $T^e_\mathcal{E}(x,y)$ is an evaluation of the Grothendieck-Tutte polynomial, see Definition \ref{Definition:GT-domain}.
	
	\begin{proposition}\label{elliptic-tutte-is-our-tutte}
		Let $\mathcal{E}$ be an elliptic arrangement in $E(\Lambda)^d$, where $E(\Lambda)$ admits complex multiplications. (Assume $\Lambda$ is generated by $1$ and $w$, where $w$ is a quadratic extension over $\mathbb{Q}$.)
		
		\noindent
		Let $R$ be $\operatorname{End} (E(\Lambda))$.
		We denote by $\mathcal{M}$ the $R$-matroid realized by $\mathcal{E}$.
		
		Then the elliptic Tutte polynomial $T^e_\mathcal{E}(x,y)$ is $\tilde{T}_{\mathcal{M}}(x,y)$, the evaluation \eqref{eq:tilte-tutte} of the Grothendieck-Tutte polynomail for the $R$-matroid $\mathcal{M}$.
	\end{proposition}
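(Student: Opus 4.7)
The plan is to prove the identity summand by summand via the key relation $m(S) = |\tor(S)^\vee|$ together with the matching of ranks. Since $R = \operatorname{End}(E(\Lambda))$ acts on $E(\Lambda)$ and hence on $E(\Lambda)^d$, every elliptic hyperplane $l_i \colon E(\Lambda)^d \to E(\Lambda)$ is $R$-linear and is cut out by exactly the column $\psi(i) \in R^d$ that realizes $\mathcal{M}$. The universal property of quotients then produces, for each $S \subseteq [n]$, a natural identification
\[
\bigcap_{i \in S} \ker l_i \;\cong\; \operatorname{Hom}_R(\mathcal{M}(S), E(\Lambda)).
\]

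Since $R$ is the ring of integers of an imaginary quadratic field (and is a PID in the cases $\mathbb{Z}[i]$ and $\mathbb{Z}[\rho]$ covered here), we may decompose $\mathcal{M}(S) = F_S \oplus \tor(S)$ with $F_S$ free of rank $d - \rk_{\mathcal{M}}(S)$. Applying $\operatorname{Hom}_R(-, E(\Lambda))$ gives
\[
\operatorname{Hom}_R(\mathcal{M}(S), E(\Lambda)) \;=\; E(\Lambda)^{d - \rk_{\mathcal{M}}(S)} \;\oplus\; \operatorname{Hom}_R(\tor(S), E(\Lambda)).
\]
The first summand is a connected abelian subvariety of complex dimension $d - \rk_{\mathcal{M}}(S)$, which recovers the dimension invariant appearing in the elliptic Tutte polynomial \eqref{eq:tutte-elliptic}; the second summand is a finite discrete set of translates, so the number of connected components of the intersection is $m(S) = |\operatorname{Hom}_R(\tor(S), E(\Lambda))|$.

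The crux is then to compute $|\operatorname{Hom}_R(T, E(\Lambda))|$ for any finite torsion $R$-module $T$. Decomposing $T = \bigoplus_j R/I_j$ and noting $\operatorname{Hom}_R(R/I_j, E(\Lambda)) = E(\Lambda)[I_j]$, the $I_j$-torsion subgroup of $E(\Lambda)$, the CM hypothesis enters decisively: for an elliptic curve with complex multiplication by $R$, classical theory yields $E(\Lambda)[I] \cong R/I$ as $R$-modules, so $|E(\Lambda)[I_j]| = |R/I_j|$. Summing over $j$ gives $|\operatorname{Hom}_R(T, E(\Lambda))| = |T|$, and Corollary \ref{torsion dual same carindality} then produces $m(S) = |\tor(S)| = |\tor(S)^\vee|$, completing the term-by-term comparison. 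The main obstacle is precisely the $R$-linear isomorphism $E(\Lambda)[I] \cong R/I$: without CM one has only the weaker $E[n] \cong (\mathbb{Z}/n)^2$, and it is the CM assumption that refines this into the $R$-module statement whose cardinality matches the Dedekind-domain torsion cut out by $\mathcal{M}$.
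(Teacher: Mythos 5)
Your proof is correct and takes essentially the same approach as the paper: the crux in both is identifying the multiplicity $m(S)$ with $|\tor(S)^{\vee}|$ via the isogeny-kernel computation $E(\Lambda)[I]\cong R/I$ for the CM curve (the paper packages this as the short exact sequence $0\to\Lambda/a\Lambda\to\mathbb{C}/\Lambda\to\mathbb{C}/\Lambda\to 0$). The paper's argument is considerably terser, leaving the identification $\bigcap_{i\in S}\ker l_i\cong\operatorname{Hom}_R(\mathcal{M}(S),E(\Lambda))$ and the free-plus-torsion split implicit, so your version correctly supplies exactly the details the paper elides.
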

	\begin{proof}
		We observe that since $\mathcal{E}$ is a central elliptic arrangement, the matroid $\mathcal{M}$ is made by $R$-modules with finite (dual) torsion modules. Thus all results in Section \ref{Sec:ring-of-integers} still hold also for these matroids, and in particular one can evaluate, via \eqref{eq:tilte-tutte}, the Grothendieck-Tutte polynomial of $\mathcal{M}$.
		 
			
		We only need to prove, now, that $m(S)$ is $\tor S$. This fact arises from the well know result that the kernel of the isogeny $\nicefrac{\mathbb{C}}{\Lambda} \rightarrow \nicefrac{\mathbb{C}}{\Lambda}$ given by the embedding $a\Lambda\subseteq \Lambda$, where $a$ is a complex number, is $\nicefrac{\Lambda}{a\Lambda}$:
		\[
		0\rightarrow \nicefrac{\Lambda}{a\Lambda}\rightarrow \nicefrac{\mathbb{C}}{\Lambda} \rightarrow \nicefrac{\mathbb{C}}{\Lambda}\rightarrow 0.
		\]
	\end{proof}
	
	Finally we prove the extension of Bibby's Theorem:
	
	\begin{theorem}\label{thm-extension-bibby}
		Let $A(\mathcal{E})$ be the the model for the cohomology of the open complement $U$ of the elliptic arrangement in $E^d$. Then,
		\[
		A(\mathcal{E})(t)=t^nT^e_\mathcal{E}(1+\nicefrac{(1+t)^2}{t},0).
		\]
	\end{theorem}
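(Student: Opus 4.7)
The plan is to upgrade Bibby's theorem from the assumption $\operatorname{End}(E(\Lambda))=\mathbb{Z}$ to the CM case by combining Proposition \ref{elliptic-tutte-is-our-tutte} with the fact that the construction of the model $A(\mathcal{E})$ is insensitive to complex multiplication. The crucial point is that Proposition \ref{elliptic-tutte-is-our-tutte} recasts the geometric multiplicity $m(S)$ of \eqref{eq:tutte-elliptic} as the algebraic invariant $|\tor(S)|$ of the $R$-matroid $\mathcal{M}$ associated to $\mathcal{E}$ (with $R=\operatorname{End}(E(\Lambda))$), so that $T^e_\mathcal{E}$ is intrinsically a specialization of the Grothendieck-Tutte polynomial of $\mathcal{M}$ and can be manipulated using the $R$-matroid machinery of Sections \ref{Sec-Tutte-General}--\ref{Sec:ring-of-integers}.

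First, I would unpack Bibby's model. For each $S\subseteq [n]$, the intersection $\bigcap_{i\in S}\ell_i$ is a finite disjoint union of $m(S)$ translates of an abelian subvariety of complex dimension $\rk(S)$; each translate has cohomology isomorphic to the exterior algebra on $2\rk(S)$ generators, with Poincar\'e polynomial $(1+t)^{2\rk(S)}$. Bibby's Morgan-style model $A(\mathcal{E})$ has Hilbert series that splits as a sum over $S$ of $m(S)(1+t)^{2\rk(S)}$ weighted by a combinatorial factor recording the inclusion-exclusion contribution of the complement inside the ambient $E^d$. None of these inputs uses the non-CM hypothesis: $E^d$ is an abelian variety regardless of $\operatorname{End}(E(\Lambda))$, and the number and dimension of components of each intersection are controlled purely by the matrix defining the arrangement.

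Second, I would rearrange the resulting sum so that the specialization $x=1+(1+t)^2/t$, $y=0$ of $T^e_\mathcal{E}$ matches term-by-term. Concretely, the factor $(x-1)^{r'-\rk(S)}=((1+t)^2/t)^{r'-\rk(S)}$ combined with the overall $t^n$ produces the $(1+t)^{2(r'-\rk(S))}$ pieces and leaves a clean power of $t$, while $(y-1)^{|S|-\rk(S)}=(-1)^{|S|-\rk(S)}$ accounts for the alternating signs characteristic of the inclusion-exclusion. This is exactly Bibby's identity in the non-CM case; the computation is formal once $m(S)$ and $\rk(S)$ are treated as black boxes.

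The principal obstacle is to verify that every step of Bibby's derivation of the Hilbert series of $A(\mathcal{E})$ uses only the geometric data $(m(S),\rk(S))$, with no hidden appeal to $\operatorname{End}(E(\Lambda))=\mathbb{Z}$. For instance, if her argument invokes a deletion-contraction recursion on $T^e_\mathcal{E}$ or on the model, one must recover that recursion in the CM case; this is precisely supplied by Proposition \ref{elliptic-tutte-is-our-tutte} together with Theorem \ref{tutte formula} and Corollaries \ref{torsion-free loop} and \ref{no torsion coloop}, which yield deletion-contraction for $T^e_\mathcal{E}=\tilde T_\mathcal{M}$ over $R$ as the direct analogue of the $\mathbb{Z}$-case. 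Any auxiliary use of $\mathbb{Z}$-matroid duality is replaced by the corresponding $R$-matroid statement, so Bibby's inductive framework goes through verbatim and yields the claimed identity.
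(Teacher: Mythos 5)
Your proposal takes the same starting point as the paper's proof: invoke Proposition \ref{elliptic-tutte-is-our-tutte} to identify $T^e_{\mathcal{E}}$ with $\tilde T_{\mathcal{M}}$, then argue that Bibby's derivation of the Hilbert series of $A(\mathcal{E})$ only uses combinatorial data available in the CM case. That much is exactly right, and it is the whole content of the paper's argument.

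Where you diverge is in your account of \emph{why} Bibby's argument transfers. You sketch a direct inclusion--exclusion pairing of terms (the $(1+t)^{2\rk(S)}$ Poincar\'e polynomials of the strata weighted by $m(S)$ and a sign), and, as a backup, you reach for the deletion--contraction recursion for $\tilde T_{\mathcal{M}}$ via Theorem \ref{tutte formula} and Corollaries \ref{torsion-free loop}, \ref{no torsion coloop}. The paper instead points at the non-broken-circuit (NBC) structure of the classical matroid $\mathcal{M}\otimes Q(w)$ as the combinatorial engine; this is closer to what Bibby actually does, since her computation of $A(\mathcal{E})(t)$ (and the identification of the $y=0$ specialization of the Tutte polynomial with a generating function over NBC sets) is an NBC count, not an inclusion--exclusion over all $S$ nor a bare deletion--contraction induction. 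Your inclusion--exclusion sketch in the second paragraph is therefore somewhat off as a description of Bibby's mechanism, although you hedge it correctly by treating $m(S),\rk(S)$ as black boxes. Both routes lead to the same conclusion, because the essential observation in either formulation is that Bibby's argument sees only $(m(S),\rk(S))$ and the underlying classical matroid, and Proposition \ref{elliptic-tutte-is-our-tutte} supplies the algebraic meaning of $m(S)$ in the CM case. If you rewrote the second paragraph to invoke NBC sets of $\mathcal{M}\otimes Q(w)$ instead of inclusion--exclusion, you would reproduce the paper's proof verbatim; as written, yours is a correct but slightly less precise variant.
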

	\begin{proof}
		If $E(\Lambda)$ does not admit complex multiplications, then the proof is provided in Section 4 of \cite{MR3487239}.
		
		If $E(\Lambda)$ has complex multiplications, then because of Proposition \ref{elliptic-tutte-is-our-tutte} we know that the elliptic Tutte is precisely the Grothendieck Tutte $T_{\mathcal{M}}(x,y)$. 
		Now, set $R=\operatorname{End} (E(\Lambda))$, where the lattice $\Lambda$ is generated by $1$ and $w$ and $R$ admit the complex multiplication $[w]$.
		We consider the realized matroid $\mathcal{M}$ over the $R$-matroid provided by $\mathcal{E}$.
		
		\noindent
		Thus, the proof follows as in the previous case, by using the non-broken circuits of the classical matroid $\mathcal{M}\otimes Q(w)$.
	\end{proof}

	\bibliographystyle{alpha}
	\bibliography{Reference}

\begin{thebibliography}{DCP08b}

\bibitem[AD12]{MR3000567}
Laura Anderson and Emanuele Delucchi.
\newblock Foundations for a theory of complex matroids.
\newblock {\em Discrete Comput. Geom.}, 48(4):807--846, 2012.

\bibitem[AM16]{atiyah2018introduction}
M.~F. Atiyah and I.~G. Macdonald.
\newblock {\em Introduction to commutative algebra}.
\newblock Addison-Wesley Series in Mathematics. Westview Press, Boulder, CO,
  economy edition, 2016.
\newblock For the 1969 original see [ MR0242802].

\bibitem[And19]{MR3861778}
Laura Anderson.
\newblock Vectors of matroids over tracts.
\newblock {\em J. Combin. Theory Ser. A}, 161:236--270, 2019.

\bibitem[AW04]{alaca2004introductory}
\c{S}aban Alaca and Kenneth~S. Williams.
\newblock {\em Introductory algebraic number theory}.
\newblock Cambridge University Press, Cambridge, 2004.

\bibitem[BB19]{BakerBowlwerMatroidsHyper}
Matthew Baker and Nathan Bowler.
\newblock Matroids over partial hyperstructures.
\newblock {\em Adv. Math.}, 343:821--863, 2019.

\bibitem[Ber09]{MR2538614}
Jonas Bergstr\"{o}m.
\newblock Equivariant counts of points of the moduli spaces of pointed
  hyperelliptic curves.
\newblock {\em Doc. Math.}, 14:259--296, 2009.

\bibitem[BG18]{BibbyGadish-elliptic}
Christin Bibby and Nir Gadish.
\newblock Combinatorics of orbit configuration spaces.
\newblock 2018.
\newblock arXiv:1804.06863.

\bibitem[Bib16]{MR3487239}
Christin Bibby.
\newblock Cohomology of abelian arrangements.
\newblock {\em Proc. Amer. Math. Soc.}, 144(7):3093--3104, 2016.

\bibitem[BLV78]{MR0485461}
Robert~G. Bland and Michel Las~Vergnas.
\newblock Orientability of matroids.
\newblock {\em J. Combinatorial Theory Ser. B.}, 24(1):94--123, 1978.

\bibitem[BM14]{MR3240933}
Petter Br\"{a}nd\'{e}n and Luca Moci.
\newblock The multivariate arithmetic {T}utte polynomial.
\newblock {\em Trans. Amer. Math. Soc.}, 366(10):5523--5540, 2014.

\bibitem[Bry72]{MR0330004}
T.~H. Brylawski.
\newblock The {T}utte-{G}rothendieck ring.
\newblock {\em Algebra Universalis}, 2:375--388, 1972.

\bibitem[DCP05]{MR2183118}
C.~De~Concini and C.~Procesi.
\newblock On the geometry of toric arrangements.
\newblock {\em Transform. Groups}, 10(3-4):387--422, 2005.

\bibitem[DCP08a]{DC-P-Box}
C.~De~Concini and C.~Procesi.
\newblock Hyperplane arrangements and box splines.
\newblock {\em Michigan Math. J.}, 57:201--225, 2008.
\newblock With an appendix by A. Bj\"orner, Special volume in honor of Melvin
  Hochster.

\bibitem[DCP08b]{MR2492449}
C.~De~Concini and C.~Procesi.
\newblock Hyperplane arrangements and box splines.
\newblock {\em Michigan Math. J.}, 57:201--225, 2008.
\newblock With an appendix by A. Bj\"{o}rner, Special volume in honor of Melvin
  Hochster.

\bibitem[DD18]{Delucchi-Dali-STANLEYREISNERRINGS}
Alessio D'ali and Emanuele Delucchi.
\newblock Stanley-reisner rings for symmetric simplicial complexes,
  g-semimatroids and abelian arrangements.
\newblock 2018.
\newblock arXiv:1804.07366.

\bibitem[DM13]{MR2989987}
Michele D'Adderio and Luca Moci.
\newblock Arithmetic matroids, the {T}utte polynomial and toric arrangements.
\newblock {\em Adv. Math.}, 232:335--367, 2013.

\bibitem[DSY16]{MR3477330}
Graham Denham, Alexander~I. Suciu, and Sergey Yuzvinsky.
\newblock Combinatorial covers and vanishing of cohomology.
\newblock {\em Selecta Math. (N.S.)}, 22(2):561--594, 2016.

\bibitem[DW92]{MR1164708}
Andreas W.~M. Dress and Walter Wenzel.
\newblock Valuated matroids.
\newblock {\em Adv. Math.}, 93(2):214--250, 1992.

\bibitem[Eis13]{eisenbud2013commutative}
David Eisenbud.
\newblock {\em Commutative Algebra: with a view toward algebraic geometry},
  volume 150.
\newblock Springer Science \& Business Media, 2013.

\bibitem[FM16]{fink2016matroids}
Alex Fink and Luca Moci.
\newblock Matroids over a ring.
\newblock {\em J. Eur. Math. Soc. (JEMS)}, 18(4):681--731, 2016.

\bibitem[FM19]{MR3883211}
Alex Fink and Luca Moci.
\newblock Polyhedra and parameter spaces for matroids over valuation rings.
\newblock {\em Adv. Math.}, 343:448--494, 2019.

\bibitem[Fre13]{Frenkphdthesis}
Bart Frenk.
\newblock {\em Tropical varieties, maps, and gossip}.
\newblock PhD thesis, Technische Universiteit Eindhoven, 2013.
\newblock 167 pages, ISBN: 978-90-386-3343-5.

\bibitem[Gal12]{MR2931758}
Steven~D. Galbraith.
\newblock {\em Mathematics of public key cryptography}.
\newblock Cambridge University Press, Cambridge, 2012.

\bibitem[GG18]{MR3783573}
Jeffrey Giansiracusa and Noah Giansiracusa.
\newblock A {G}rassmann algebra for matroids.
\newblock {\em Manuscripta Math.}, 156(1-2):187--213, 2018.

\bibitem[GJL17]{MR3702974}
Jeffrey Giansiracusa, Jaiung Jun, and Oliver Lorscheid.
\newblock On the relation between hyperrings and fuzzy rings.
\newblock {\em Beitr. Algebra Geom.}, 58(4):735--764, 2017.

\bibitem[GS]{M2}
Daniel~R. Grayson and Michael~E. Stillman.
\newblock Macaulay2, a software system for research in algebraic geometry.
\newblock Available at \url{http://www.math.uiuc.edu/Macaulay2/}.

\bibitem[Hir19]{MR3944531}
Hiroshi Hirai.
\newblock Uniform semimodular lattices and valuated matroids.
\newblock {\em J. Combin. Theory Ser. A}, 165:325--359, 2019.

\bibitem[Jun18a]{MR3725876}
Jaiung Jun.
\newblock Algebraic geometry over hyperrings.
\newblock {\em Adv. Math.}, 323:142--192, 2018.

\bibitem[Jun18b]{MR3771848}
Jaiung Jun.
\newblock Valuations of semirings.
\newblock {\em J. Pure Appl. Algebra}, 222(8):2063--2088, 2018.

\bibitem[Kna92]{MR1193029}
Anthony~W. Knapp.
\newblock {\em Elliptic curves}, volume~40 of {\em Mathematical Notes}.
\newblock Princeton University Press, Princeton, NJ, 1992.

\bibitem[K\v94]{MR1274092}
Igor K\v{r}\'{\i}\v{z}.
\newblock On the rational homotopy type of configuration spaces.
\newblock {\em Ann. of Math. (2)}, 139(2):227--237, 1994.

\bibitem[LTY19]{YeTanMasahiko}
Ye~Liu, Tan~Nhat Tran, and Masahiko Yoshinaga.
\newblock {G}-{T}utte polynomials and abelian {L}ie group arrangements.
\newblock {\em IMRN}, 2019.

\bibitem[LV12]{MR3203648}
Andrey Levin and Alexander Varchenko.
\newblock Cohomology of the complement to an elliptic arrangement.
\newblock In {\em Configuration spaces}, volume~14 of {\em CRM Series}, pages
  373--388. Ed. Norm., Pisa, 2012.

\bibitem[Mar16]{MR3423448}
Ivan Martino.
\newblock The {E}kedahl invariants for finite groups.
\newblock {\em J. Pure Appl. Algebra}, 220(4):1294--1309, 2016.

\bibitem[Mar17]{MR3657413}
Ivan Martino.
\newblock Introduction to the {E}kedahl invariants.
\newblock {\em Math. Scand.}, 120(2):211--224, 2017.

\bibitem[Mar18]{martino2017face}
Ivan Martino.
\newblock Face module for realizable {$\Bbb Z$}-matroids.
\newblock {\em Contrib. Discrete Math.}, 13(2):74--87, 2018.

\bibitem[Moc12a]{moci2012tutte}
Luca Moci.
\newblock A tutte polynomial for toric arrangements.
\newblock {\em Transactions of the American Mathematical Society},
  364(2):1067--1088, 2012.

\bibitem[Moc12b]{MR2846363}
Luca Moci.
\newblock A {T}utte polynomial for toric arrangements.
\newblock {\em Trans. Amer. Math. Soc.}, 364(2):1067--1088, 2012.

\bibitem[MS05]{MR2110098}
Ezra Miller and Bernd Sturmfels.
\newblock {\em Combinatorial commutative algebra}, volume 227 of {\em Graduate
  Texts in Mathematics}.
\newblock Springer-Verlag, New York, 2005.

\bibitem[Mur96]{MR1433646}
Kazuo Murota.
\newblock On exchange axioms for valuated matroids and valuated delta-matroids.
\newblock {\em Combinatorica}, 16(4):591--596, 1996.

\bibitem[Oxl11]{oxley2006matroid}
James Oxley.
\newblock {\em Matroid theory}, volume~21 of {\em Oxford Graduate Texts in
  Mathematics}.
\newblock Oxford University Press, Oxford, second edition, 2011.

\bibitem[Pag18a]{Pagaria-Orientable-arithmetic-matroids}
Roberto Pagaria.
\newblock Orientable arithmetic matroids.
\newblock 2018.
\newblock ArXiv:1805.11888.

\bibitem[Pag18b]{PagariaElliptic}
Roberto Pagaria.
\newblock Poincaré polynomial of elliptic arrangements is not a specialization
  of the tutte polynomial.
\newblock {\em ArXiv:1810.06248}, October 2018.

\bibitem[Rin12]{MR2844079}
Felipe Rinc\'{o}n.
\newblock Isotropical linear spaces and valuated {D}elta-matroids.
\newblock {\em J. Combin. Theory Ser. A}, 119(1):14--32, 2012.

\bibitem[Sta84]{MR782306}
Richard~P. Stanley.
\newblock An introduction to combinatorial commutative algebra.
\newblock In {\em Enumeration and design ({W}aterloo, {O}nt., 1982)}, pages
  3--18. Academic Press, Toronto, ON, 1984.

\bibitem[Sta91]{stanley1991f}
Richard~P. Stanley.
\newblock {$f$}-vectors and {$h$}-vectors of simplicial posets.
\newblock {\em J. Pure Appl. Algebra}, 71(2-3):319--331, 1991.

\bibitem[Sta96]{stanley2007combinatorics}
Richard~P. Stanley.
\newblock {\em Combinatorics and commutative algebra}, volume~41 of {\em
  Progress in Mathematics}.
\newblock Birkh\"{a}user Boston, Inc., Boston, MA, second edition, 1996.

\bibitem[Stu88]{MR970389}
Bernd Sturmfels.
\newblock Neighborly polytopes and oriented matroids.
\newblock {\em European J. Combin.}, 9(6):537--546, 1988.

\bibitem[Tot96]{MR1404924}
Burt Totaro.
\newblock Configuration spaces of algebraic varieties.
\newblock {\em Topology}, 35(4):1057--1067, 1996.

\bibitem[TY19]{MR3916005}
Tan~Nhat Tran and Masahiko Yoshinaga.
\newblock Combinatorics of certain abelian {L}ie group arrangements and
  chromatic quasi-polynomials.
\newblock {\em J. Combin. Theory Ser. A}, 165:258--272, 2019.

\bibitem[VW15]{MR3336842}
Ravi Vakil and Melanie~Matchett Wood.
\newblock Discriminants in the {G}rothendieck ring.
\newblock {\em Duke Math. J.}, 164(6):1139--1185, 2015.

\bibitem[Wei94]{MR1269324}
Charles~A. Weibel.
\newblock {\em An introduction to homological algebra}, volume~38 of {\em
  Cambridge Studies in Advanced Mathematics}.
\newblock Cambridge University Press, Cambridge, 1994.

\bibitem[Whi35]{whitney1935}
Hassler Whitney.
\newblock On the {A}bstract {P}roperties of {L}inear {D}ependence.
\newblock {\em Amer. J. Math.}, 57(3):509--533, 1935.

\bibitem[Zie93]{MR1226982}
G\"{u}nter~M. Ziegler.
\newblock What is a complex matroid?
\newblock {\em Discrete Comput. Geom.}, 10(3):313--348, 1993.

\end{thebibliography}
	
	
	
	

\end{document}